\newtheorem{Theorem}{Theorem}[section]
\newtheorem{Lemma}[Theorem]{Lemma}
\newtheorem{Definition}{\indent Definition}[section]
\newtheorem{remark}[Theorem]{Remark}
\title[Regularity for mixed operators]{Regularity results for solutions \\ of mixed local and nonlocal elliptic equations}
\author[X. Su]{Xifeng Su}
\address{School of Mathematical Sciences, Laboratory of Mathematics and Complex Systems (Ministry of Education)\\
Beijing Normal University,
No. 19, XinJieKouWai St., HaiDian District, Beijing 100875, P. R. China}
\email{xfsu@bnu.edu.cn, billy3492@gmail.com}
\author[E. Valdinoci]{Enrico Valdinoci}
\address{Department of Mathematics and Statistics, University of Western Australia, 35 Stirling Highway, WA 6009 Crawley, Australia}
\email{enrico.valdinoci@uwa.edu.au}
\author[Y. Wei]{Yuanhong Wei}
\address{School of Mathematics, Jilin University, No. 2699, Qianjin St., Changchun 130012, P. R.  China}
\email{weiyuanhong@jlu.edu.cn}
\author[J. Zhang]{Jiwen Zhang}
\address{School of Mathematical Sciences, Beijing Normal University, No. 19, XinJieKouWai St., HaiDian District, Beijing 100875, P. R. China}
\email{jwzhang628@mail.bnu.edu.cn}
\subjclass[2010]{35B65, 35R11, 35J67.}
\keywords{operators of mixed order, regularity, $L^\infty$-boundedness, H\"older estimate for the gradient.}
\thanks{Both X. Su and Y. Wei are supported by the National Natural Science Foundation of China (Grant No. 11971060, 11871242). Y. Wei is supported by Natural Science Foundation of Jilin Province (Grant No. 20200201248JC), and Scientific Research Project of Education Department of Jilin Province (Grant No. JJKH20220964KJ)}
\begin{document}
\maketitle

\begin{abstract}
We consider the mixed local-nonlocal semi-linear elliptic equations driven by the superposition of Brownian and L\'evy processes
\begin{equation*}
\left\{
\begin{array}{ll}
    - \Delta u +  (-\Delta)^s u = g(x,u) & \hbox{in $\Omega$,} \\
    u=0 &  \hbox{in $\mathbb{R}^n\backslash\Omega$.} \\
\end{array}
\right.
\end{equation*}
Under mild assumptions on the nonlinear term $g$, we show the $L^\infty$ boundedness of  any weak solution (either not changing sign or sign-changing) by the Moser iteration method. Moreover, when $s\in (0, \frac{1}{2}]$, we obtain that the solution is unique and actually belongs to $C^{1,\alpha}(\overline{\Omega})$ for any $\alpha\in (0,1)$.
\end{abstract}

\section{Introduction}
The present paper is concerned with the regularity results of elliptic equations driven by a special subclass of mixed differential and pseudo-differential elliptic operators
\begin{equation}\label{eq:operator L}
\mathcal{L}=-\Delta  + (-\Delta)^s, \qquad \text{ for some  } s\in(0,1).
\end{equation}
Here, $ (-\Delta )^{s} $ is defined as
\begin{equation}
(-\Delta )^{s} u(x)=c_{n,s}\,{\text PV}\int_{\mathbb{R}^{n}} \frac{u(x)-u(y)}{|x-y|^{n+2s}}\, dy,
\end{equation}
 where $ c_{n,s} $ is a suitable normalization constant, whose explicit value only plays a minor role in this paper,
 and~${\text PV}$ means that the integral is taken in the Cauchy Principal Value sense.

The operator $\mathcal{L}$ naturally arises as the superposition of a classical random walk and a L\'evy flight. For instance, as observed in~\cite{DV21}, these operators describe a biological species whose individuals diffuse either by a random walk or by a jump process, according to prescribed probabilities.
The analysis of different types of mixed operators motivated by biological questions
has also carried out in~\cite{MR3082317, MR3771424, Dipierro2021POU}.

Moreover, mixed operators have recently received a great attention from
different points of view, including regularity theory~\cite{MR2129093, MR2422079, MR2653895, MR2911421, MR2912450, MR3724879, MR4381148, MR4387204, BIMUVE}, existence and non-existence results~\cite{MR4275496, 67SALORT, FBENIDAND12}, eigenvalue problems~\cite{MR4044581, MR4400914, Dipierro2021LinearTF},
shape optimization and calculus of variations~\cite{MR4391102, FABEJK102},
symmetry and rigidity results~\cite{MR4313576}, etc.

One interesting challenging aspect of this topic is that
it combines the classical setting and the
features typical of nonlocal operators in a framework that is not scale-invariant. Hence,
at different scales, different features of the classical/nonlocal world tend to prevail, or they end up coexisting
into new interesting phenomena.

The goal of this paper is to show the $L^\infty$ and $C^{1, \alpha}$ regularity for the weak solutions of
 \begin{equation}\label{eq:g main equation}
 	\begin{cases}
 	\mathcal{L} u= g(x,u) \quad &\text{in } \Omega, \\
 	u=0 & \text{in } \mathbb{R}^{n}\backslash \Omega,
 	\end{cases}
 	\end{equation}
	where a suitable notion of weak solutions associated to \eqref{eq:g main equation} on the space $X_{0}^1$  will be detailed in Definition~\ref{definition of weak solution} of Section~\ref{sec:preliminary}.
	
In order to get right to the point, we will suppose in the present paper that the existence of weak solutions of \eqref{eq:g main equation}   is known. As a matter of fact, weak solutions are often obtained via variational methods and nonlinear analysis tools, e.g. one may refer \cite{SVWZ} for the existence of weak solutions (both not changing sign and sign-changing)  under standard nonlinear analysis assumptions.

We  assume once and for all that~$ s\in (0,1)$ is given, $ n>2s,$ and $ \Omega \subset \mathbb{R}^{n} $ is an open bounded set  satisfying some smooth boundary conditions, which will be specified as needed.

We  will first derive an $L^\infty$ regularity result for any weak solution of the elliptic problem~\eqref{eq:g main equation}. More precisely, denoting~$2^*_s:=\frac{2n}{n-2s}$, we have:

\begin{Theorem}\label{th: regularity}
Let $ u\in X_{0}^1 $  be   a weak solution of \eqref{eq:g main equation} and $ s\in(0,1) $.
	Assume that there exist $ c>0 $ and $q\in [2,2^*_s]$ such that
\begin{equation}\label{eq: H 1}
 		|g(x,t)|\leqslant c(1+|t|^{q-1}) \quad \text{ for a.e. } x\in  \Omega ,  t\in \mathbb{R}.
\end{equation}Then, $ u\in L^{\infty}(\Omega)$. More precisely, there exists a constant $C_0= C_0(c,n, s, \Omega)>0$ independent of $u$, such that
	\begin{equation*}
		\|u\|_{\infty}\leqslant C_0\left(1+\int_{\Omega}|u|^{2_s^*\beta_1}\,dx\right)^{\frac{1}{2_s^*(\beta_1-1)}},
	\end{equation*}where~$\beta_1:=\frac{2_s^*+1}{2}$ and $\|\cdot\|_\infty := \|\cdot\|_{L^\infty(\Omega)}$.
	\end{Theorem}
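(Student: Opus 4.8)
The plan is to run a Moser iteration scheme, which is the standard route to $L^\infty$ bounds from an $L^p$-type integrability assumption, adapted to the mixed operator $\mathcal L = -\Delta + (-\Delta)^s$. Because the local part $-\Delta$ already supplies the full Gagliardo–Nirenberg–Sobolev inequality with exponent $2^* = \frac{2n}{n-2}$, and $2^* \geqslant 2^*_s$, the nonlocal term can essentially be discarded from below (its contribution to the energy is nonnegative), so the iteration should proceed much as in the purely local case. The key point to extract first is the energy inequality: testing the weak formulation against a suitable truncated and powered version of $u$.

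First I would fix, for $\beta \geqslant 1$ and $T>0$, the truncation $u_T := \operatorname{sgn}(u)\min\{|u|,T\}$ and use the test function $\varphi = u\,|u_T|^{2(\beta-1)}$ (or the closely related $\varphi = u|u_T|^{2(\beta-1)}$ with the standard cutoff to make it admissible in $X_0^1$). Plugging this into the weak formulation $\langle \mathcal L u,\varphi\rangle = \int_\Omega g(x,u)\varphi$, the local part produces, after the usual computation, a term controlling $\big\|\nabla\big(|u_T|^{\beta-1}u\big)\big\|_{L^2}^2$ up to the constant $\frac{\beta^2}{2\beta-1}\leqslant \beta$; the nonlocal bilinear form $\iint \frac{(u(x)-u(y))(\varphi(x)-\varphi(y))}{|x-y|^{n+2s}}$ is nonnegative when paired with this monotone test function (the map $t\mapsto t|t_T|^{2(\beta-1)}$ is nondecreasing, so the integrand is $\geqslant 0$), hence can simply be dropped. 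This reduces matters to the inequality
\begin{equation*}
\big\|\nabla(|u_T|^{\beta-1}u)\big\|_{L^2(\Omega)}^2 \leqslant C\beta \int_\Omega (1+|u|^{q-1})|u|\,|u_T|^{2(\beta-1)}\,dx,
\end{equation*}
using \eqref{eq: H 1}. Then I apply the Sobolev inequality on the left and, on the right, split and estimate using $q-1 \leqslant 2^*_s - 1$, absorbing the "bad" high power via Hölder with the $L^{2^*_s}$ norm of $u$ on the set where $|u|$ is large (choosing that set's measure small, as in the De Giorgi–Stampacchia truncation trick) so that it can be moved to the left-hand side. Letting $T\to\infty$ by monotone convergence gives a clean recursive estimate: with $\chi := 2^*/2 = \frac{n}{n-2}$,
\begin{equation*}
\|u\|_{L^{2\chi\beta}(\Omega)} \leqslant (C\beta)^{1/(2\beta)}\,\|u\|_{L^{2\beta}(\Omega)}^{1/\cdots}\cdots,
\end{equation*}
valid once we know $u \in L^{2\beta}$; the iteration starts at the exponent $2^*_s\beta_1$ with $\beta_1 = \frac{2^*_s+1}{2}$, which is exactly where \eqref{eq: H 1} together with $u\in X_0^1 \hookrightarrow L^{2^*_s}$ guarantees the right-hand side is finite.

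Finally I would iterate: set $\beta_{k+1} = \chi\beta_k$ (equivalently track the exponents $2\chi^k\beta_1$), take logarithms, and sum the resulting series $\sum_k \frac{\log(C\beta_k)}{2\beta_k}$, which converges because $\beta_k$ grows geometrically; this yields $\|u\|_{L^\infty(\Omega)} \leqslant C_0\big(1 + \|u\|_{L^{2^*_s\beta_1}(\Omega)}^{\theta}\big)$ for an explicit exponent $\theta$, and bounding $\|u\|_{L^{2^*_s\beta_1}}$ crudely by interpolation back to the stated quantity $\big(1+\int_\Omega |u|^{2^*_s\beta_1}\big)^{1/(2^*_s(\beta_1-1))}$ gives the claimed form; the constant $C_0$ depends only on $c$, $n$, $s$ and $\Omega$ (through the Sobolev constant), not on $u$. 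The main obstacle I anticipate is purely bookkeeping: verifying that $\varphi = u|u_T|^{2(\beta-1)}$ is genuinely an admissible test function in $X_0^1$ at each stage (this needs the a priori knowledge $u \in L^{2\beta_k}$, which is why the truncation $u_T$ and the inductive structure are essential), and keeping the $\beta$-dependence of the constants sharp enough that the series in the last step converges — the sign of the nonlocal term and the Sobolev embedding itself are the easy parts here.
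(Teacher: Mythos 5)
Your plan is correct in spirit, but it runs the iteration through the opposite half of the operator than the paper does. You keep the local energy, apply the classical Sobolev embedding $H^1_0(\Omega)\hookrightarrow L^{2^*}(\Omega)$ to $|u_T|^{\beta-1}u$, and discard the nonlocal bilinear form, which is indeed nonnegative since your test function is a nondecreasing function of $u$ vanishing at $0$ (and is admissible in $X_0^1$ because it is a Lipschitz function of $u$). The paper does the reverse: it tests with $\varphi(u)\varphi'(u)$ for a convex truncated power $\varphi$, shows the \emph{local} term is nonnegative via $\int|\nabla u|^2\big(|\varphi'(u)|^2+\varphi(u)\varphi''(u)\big)\,dx\geqslant 0$, keeps the fractional energy, and uses the convexity inequality $(-\Delta)^s\varphi(u)\leqslant\varphi'(u)(-\Delta)^su$ together with the fractional Sobolev inequality with exponent $2^*_s$; the iteration exponents are then tied to $2^*_s$ via $2\beta_{m+1}+2^*_s-2=2^*_s\beta_m$, which is what makes the quantity $\big(1+\int_\Omega|u|^{2^*_s\beta_1}\big)^{1/(2^*_s(\beta_1-1))}$ appear exactly. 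Your route is the more classical Brezis--Kato/Moser scheme and yields a faster iteration since $2^*>2^*_s$, but two points need attention: (i) the hypothesis is only $n>2s$, so $n=1,2$ are allowed and $2^*=\frac{2n}{n-2}$ is then unavailable — you would have to treat small dimensions separately (easy, since $H^1_0(\Omega)$ embeds into every $L^p(\Omega)$ there), whereas the paper's $2^*_s$-based argument covers all $n>2s$ uniformly; and (ii) to land on the precise exponent $\frac{1}{2^*_s(\beta_1-1)}$ in the stated estimate you must either reproduce the paper's bookkeeping with the nonhomogeneous quantities $\big(1+\int_\Omega|u|^{2^*_s\beta_m}\big)^{1/(2^*_s(\beta_m-1))}$ or check that the total exponent produced by your recursion does not exceed it; your sketch leaves this, and the absorption step (Hölder on $\{|u|>R\}$ with $(q-2)\frac n2\leqslant 2^*_s$, $R$ large), at the level of a plan, but both go through.
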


We remark that the weak solutions here could be both not changing sign and sign-changing.  In the case of the fractional Laplace\footnote{As a technical remark, the additional presence of the Laplace operator will produce an extra term (that is the first term in \eqref{integration of the main equation}). We will check that this term is nonnegative by using a suitable integration by parts for  the second-order generalized derivative, which will allow us to successfully complete the estimate produced by  the full mixed order operator.}operator $(-\Delta )^s$,
the $L^\infty$ boundedness is also proved respectively in \cite{BCSS15,DMVbook} for a positive weak solution and \cite{WS15, WS18} for a weak solution which could change sign.
\medskip

As is standard in the literature (see e.g. \cite{bensoussan1984impulse, gimbert1984existence}),
the  $L^\infty$ regularity of weak solutions allows one to obtain a global $ C^{1,\alpha} $-regularity theory, which relies on   the $ W^{2,p} $-regularity theory.

We will pursue this direction and  our $ C^{1,\alpha} $-regularity theorem of the mixed elliptic equation \eqref{eq:g main equation} with $ s\in\left(0,\frac{1}{2}\right] $ goes as follows.
\begin{Theorem}\label{theorem C^1 alpha}
	 Let us assume, in addition to the hypothesis of Theorem~\ref{th: regularity}, that $ \partial\Omega $ is of class  $ C^{1,1} $ and $ s\in \left(0,\frac{1}{2}\right]$, with $n>2s $. Then,
	\begin{equation}\label{eq:regularity C^1,alpha}
		u\in C^{1,\alpha}(\overline\Omega) \qquad \text{ for any   } \alpha \in (0,1).
	\end{equation}
\end{Theorem}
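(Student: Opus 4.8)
The plan is to bootstrap from the $L^\infty$ bound of Theorem \ref{th: regularity} to $C^{1,\alpha}$ regularity by treating the mixed operator $\mathcal{L}u = -\Delta u + (-\Delta)^s u = g(x,u)$ as a perturbed Laplace equation, rewriting it as $-\Delta u = g(x,u) - (-\Delta)^s u =: f$ and running a Calderón--Zygmund / Schauder iteration. First I would record that, since $u \in L^\infty(\Omega)$ and $u \equiv 0$ outside $\Omega$ with $\partial\Omega$ of class $C^{1,1}$, the growth bound \eqref{eq: H 1} immediately gives $g(\cdot, u) \in L^\infty(\Omega)$. The crucial point is to control the nonlocal term $(-\Delta)^s u$: here is exactly where the restriction $s \in (0,\tfrac12]$ is used. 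I would show that $u \in X_0^1$ together with $u \in L^\infty(\mathbb{R}^n)$ implies $(-\Delta)^s u \in L^p(\Omega)$ for suitable $p$ (indeed for all $p < \infty$ after the first step), using that for $s \le \tfrac12$ the fractional Laplacian of an $H^1 \cap L^\infty$ function, extended by zero, is integrable to a good power — the singular kernel $|x-y|^{-n-2s}$ is "milder" than the available $H^1$ regularity precisely when $2s \le 1$. Concretely, splitting the defining integral into the region near $x$ (controlled by $\|\nabla u\|_{L^2}$ via a second-order difference bound, using $2s<2$) and the far region (controlled by $\|u\|_{L^\infty}$, using $n+2s > n$), one gets $(-\Delta)^s u \in L^2(\Omega)$ to start.

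The main engine is then a bootstrap: suppose $u \in W^{2,p}(\Omega) \hookrightarrow C^{0,\gamma}(\overline\Omega)$ (once $p > n$) or at least $u \in C^{0,2s+\epsilon}$; then I claim $(-\Delta)^s u \in L^\infty(\Omega)$, hence $f = g(x,u) - (-\Delta)^s u \in L^\infty(\Omega) \subset L^p(\Omega)$ for every $p < \infty$. Feeding this into the $W^{2,p}$-regularity theory for the Dirichlet problem for $-\Delta$ on a $C^{1,1}$ domain (the classical Agmon--Douglis--Nirenberg / Calderón--Zygmund estimate, as invoked in \cite{bensoussan1984impulse, gimbert1984existence}) yields $u \in W^{2,p}(\Omega)$ for all $p < \infty$, and then Morrey's embedding $W^{2,p}(\Omega) \hookrightarrow C^{1,\alpha}(\overline\Omega)$ with $\alpha = 1 - n/p$ gives the result for every $\alpha \in (0,1)$ by taking $p$ large. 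The iteration is short because each gain in Hölder/Sobolev regularity of $u$ feeds back linearly through the nonlocal term; the only care needed is to start the loop, i.e. to pass from $u \in X_0^1 \cap L^\infty$ to some $W^{2,p_0}$ with $p_0$ giving a Hölder exponent exceeding $2s$, so that the next application of $(-\Delta)^s$ lands in $L^\infty$.

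The step I expect to be the main obstacle — and the only genuinely nonlocal estimate in the argument — is the claim that, for $s \in (0,\tfrac12]$, a function $v$ that is $C^{0,\theta}(\overline\Omega)$ with $\theta > 2s$ (or merely $W^{2,p}$ with $p$ large) and vanishes outside $\Omega$ has $(-\Delta)^s v \in L^\infty(\Omega)$, with a quantitative bound in terms of $\|v\|_{C^{0,\theta}}$ (respectively $\|v\|_{W^{2,p}}$). This is where one uses the second-order cancellation in the principal-value integral near the diagonal: writing $2v(x) - v(x+z) - v(x-z)$ one bounds the near-diagonal contribution by $[v]_{C^{0,\theta}}\int_{|z|<1}|z|^{\theta - n - 2s}\,dz < \infty$ precisely because $\theta > 2s$, while the far-field term is bounded by $\|v\|_{L^\infty}\int_{|z|>1}|z|^{-n-2s}\,dz < \infty$. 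The restriction $s \le \tfrac12$ enters because for larger $s$ the required threshold $\theta > 2s$ would exceed what the Laplace $W^{2,p}$-theory can return (one would only get $C^{1,\alpha}$, i.e.\ $\theta$ up to $2^-$, so $2s < 2$ still works — but the genuinely delicate point is the \emph{boundary} behaviour: near $\partial\Omega$ one has $u(x)\sim \mathrm{dist}(x,\partial\Omega)$, so $u$ is globally only Lipschitz across $\partial\Omega$, and one needs $2s < 1$ for $(-\Delta)^s u$ to remain in $L^\infty$ up to the boundary rather than blowing up). I would therefore treat the interior estimate and a boundary-layer estimate (using the $C^{1,1}$ regularity of $\partial\Omega$ and barrier/distance-function comparisons) separately, and it is the boundary estimate that forces $s \in (0,\tfrac12]$ and constitutes the technical heart of the proof.
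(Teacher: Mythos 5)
Your route is genuinely different from the paper's. The paper first builds a full $W^{2,p}$ existence-and-uniqueness theory for the mixed operator itself (Theorem~\ref{theorem w 2p}): it shows $(-\Delta)^s$ is a lower-order perturbation in $W^{2,p}\cap W^{1,p}_0$ (Lemmata~\ref{lemma Lp estimate about fractional operator} and~\ref{lemma Lp estimate about 1/2 operator}), solves $-\Delta u+(-\Delta)^su+\lambda u=f$ by a contraction argument for large $\lambda$, removes $\lambda$ by a maximum-principle/barrier iteration, and then Theorem~\ref{theorem C^1 alpha} follows in three lines by applying this to $f=g(x,u)\in L^\infty$ and identifying $u$ with the strong solution via Lax--Milgram. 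You instead freeze the nonlocal term on the right-hand side, $-\Delta u=g(x,u)-(-\Delta)^su$, and bootstrap with the classical Calder\'on--Zygmund theory for the Dirichlet Laplacian alone. For $s\in\left(0,\frac12\right)$ this is a viable and lighter argument: starting from $(-\Delta)^su\in L^2$ (the zero extension lies in $H^1(\mathbb{R}^n)$ and $1-2s\geqslant 0$), each Calder\'on--Zygmund step gains integrability of $(-\Delta)^su$, and once $u\in W^{2,p}$ with $p>n$ the zero extension is globally Lipschitz, so $|u(x)-u(y)|\leqslant L|x-y|$ and $\int_{|z|<1}|z|^{1-n-2s}\,dz<\infty$ give $(-\Delta)^su\in L^\infty$, whence $W^{2,p}$ for every $p$ and $C^{1,\alpha}(\overline\Omega)$ for every $\alpha\in(0,1)$ by Morrey. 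It buys you the regularity statement without constructing the resolvent theory for $\mathcal{L}$, at the price of not giving the paper's stronger solvability result.

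The genuine gap is the endpoint $s=\frac12$, which the theorem includes. Your key nonlocal claim --- that $(-\Delta)^su\in L^\infty(\Omega)$ once the zero extension is $C^{0,\theta}$ with $\theta>2s$ --- cannot be run there: since $u$ has in general a nonvanishing normal derivative, its zero extension is only Lipschitz across $\partial\Omega$, i.e.\ $\theta=1=2s$, and indeed $(-\Delta)^{1/2}u$ generically blows up logarithmically at the boundary (already for $u(x)\sim d(x)$ near a flat boundary piece one gets $(-\Delta)^{1/2}u(x)\sim\log\frac{1}{d(x)}$), so $f=g(x,u)-(-\Delta)^{1/2}u\notin L^\infty$. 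Your own justification ("one needs $2s<1$") in fact excludes $s=\frac12$, so the "boundary-layer estimate" you defer is not a technicality but the missing step. The repair is to give up $L^\infty$ and prove $(-\Delta)^{1/2}u\in L^p(\Omega)$ for every finite $p$, which suffices since the target is $W^{2,p}$ for all $p<\infty$: for instance, use $(-\Delta)^{1/2}=\sum_j R_j\partial_j$ (Riesz transforms, bounded on $L^p$) applied to the zero extension, which belongs to $W^{1,p}(\mathbb{R}^n)$ because $u\in W^{2,p}(\Omega)\cap W^{1,p}_0(\Omega)$; this is precisely the role played in the paper by the weighted boundary estimates of Lemma~\ref{lemma Lp estimate about 1/2 operator}, where the near-boundary contribution is controlled through $|u(x)|/d(x)^{\alpha}$ with $\alpha p<1$, yielding an $L^p$ (not $L^\infty$) bound. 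With that substitution, and the routine identification of the $H^1_0$ weak solution with the strong solution at each Calder\'on--Zygmund step, your bootstrap closes for the whole range $s\in\left(0,\frac12\right]$.
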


\begin{remark}\label{REMARK for some beta}
{\rm As observed e.g. in \cite[Theorem 2.7]{FABEJK102} and in the references therein, H\"older estimates
for the gradient of the solution remain valid for all~$s\in(0,1)$, in the sense that
one can prove in such a generality that~$u\in C^{1,\beta}(\overline\Omega)$ for some~$ \beta \in (0,1)$
(for instance, such a result would follow by combining Theorem~\ref{theorem C^1 alpha} and Lemma~\ref{Lp estimate for some p}).
The interest of Theorem~\ref{theorem C^1 alpha} is in finding a precise H\"older exponent for this type of regularity theory
in the range~$s\in\left(0,\frac{1}{2}\right]$.}
\end{remark}

Note that Theorem~\ref{theorem C^1 alpha}  is just an immediate corollary of  the following  $W^{2,p} $-regularity theorem.
\begin{Theorem}\label{theorem w 2p}
	Let $ \Omega $ be $ C^{1,1} $ domain in $ \mathbb{R}^n $ and $ s\in \left(0,\frac{1}{2}\right],n>2s$. Then if $ f\in L^p(\Omega) $ with $ 1<p<+\infty $, the problem
	\begin{equation}\label{eq: main equation}
	-\Delta u+(-\Delta)^su=f, \qquad \text{in } \Omega
	\end{equation}
	has a unique solution $ u\in W^{2,p}(\Omega)\cap W^{1,p}_0(\Omega)$.
	Furthermore,
	\begin{equation*}
	\|u\|_{W^{2,p}(\Omega)}\leqslant C_1\bigg(\|u\|_{p}+\|f\|_{p}\bigg),
	\end{equation*}
	where the constant $C_1$ depends on $ \Omega, n, s, p $ and $\|\cdot\|_p:=\|\cdot\|_{L^p(\Omega)}$ for short.
\end{Theorem}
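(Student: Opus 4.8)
\emph{Strategy.} The structural point is that, since $2s\le 1<2$, the term $(-\Delta)^su$ is of lower order than $-\Delta u$, so that \eqref{eq: main equation} is a compact perturbation of the Dirichlet problem for the Laplacian. I would therefore reduce matters to the classical Calder\'on--Zygmund $W^{2,p}$-theory for $-\Delta$ on a $C^{1,1}$ domain and handle $(-\Delta)^s$ by Fredholm theory. Every $u\in W^{2,p}(\Omega)\cap W^{1,p}_0(\Omega)$ is extended by $0$ outside $\Omega$ (so $(-\Delta)^su$ makes sense); this zero extension lies in $W^{1,p}(\mathbb{R}^n)$ with norm controlled by $\|u\|_{W^{1,p}(\Omega)}$, and, since $2s\le 1$, the operator $(-\Delta)^s$ maps $W^{1,p}(\mathbb{R}^n)$ boundedly into $L^p(\mathbb{R}^n)$ (for $1<p<\infty$ this follows from the Mikhlin multiplier theorem applied to the symbol $|\xi|^{2s}(1+|\xi|^2)^{-1/2}$). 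Hence $\mathcal{L}\colon W^{2,p}(\Omega)\cap W^{1,p}_0(\Omega)\to L^p(\Omega)$ is a well-defined bounded linear operator.

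\emph{Fredholm alternative.} First I would record that $-\Delta\colon W^{2,p}(\Omega)\cap W^{1,p}_0(\Omega)\to L^p(\Omega)$ is an isomorphism for every $p\in(1,\infty)$ on a $C^{1,1}$ domain (global $W^{2,p}$-solvability and uniqueness of the Dirichlet problem, together with $\|u\|_{W^{2,p}(\Omega)}\le C(\|\Delta u\|_{L^p(\Omega)}+\|u\|_{L^p(\Omega)})$). Next, $(-\Delta)^s\colon W^{2,p}(\Omega)\cap W^{1,p}_0(\Omega)\to L^p(\Omega)$ is compact: if $\{u_k\}$ is bounded in $W^{2,p}(\Omega)$, then by Rellich--Kondrachov a subsequence converges in $W^{1,p}(\Omega)$, hence the zero extensions converge in $W^{1,p}(\mathbb{R}^n)$, so $\{(-\Delta)^su_k\}$ converges in $L^p(\mathbb{R}^n)$, in particular in $L^p(\Omega)$. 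Therefore $\mathcal{L}=-\Delta+(-\Delta)^s$ is a compact perturbation of an isomorphism, hence a Fredholm operator of index zero; consequently $\mathcal{L}$ is an isomorphism from $W^{2,p}(\Omega)\cap W^{1,p}_0(\Omega)$ onto $L^p(\Omega)$ as soon as it is injective, which would give both existence and uniqueness in the stated class.

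\emph{Injectivity and the estimate.} Suppose $u\in W^{2,p}(\Omega)\cap W^{1,p}_0(\Omega)$ (extended by $0$) solves $\mathcal{L}u=0$. I would bootstrap its integrability: writing $-\Delta u=-(-\Delta)^su$, when $s<\tfrac12$ the operator $(-\Delta)^s$ gains integrability on $W^{1,p}(\mathbb{R}^n)$ via the Sobolev embedding of $W^{1-2s,p}(\mathbb{R}^n)$, while when $s=\tfrac12$ one instead gains integrability from the embedding $W^{2,p}(\Omega)\hookrightarrow W^{1,q}(\Omega)$ on the bounded domain (with $q>p$), followed again by zero extension and the bound above; either way $-\Delta u\in L^q(\Omega)$ for some $q>p$, whence $u\in W^{2,q}(\Omega)\cap W^{1,q}_0(\Omega)$ (the vanishing trace being inherited). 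Iterating, the exponent strictly increases and, after finitely many steps, reaches any prescribed value $\ge 2$, so that $u\in X_0^1$. Testing $\mathcal{L}u=0$ against $u$ and integrating by parts (legitimate since $u\in W^{2,p}(\Omega)$ vanishes on $\partial\Omega$), one obtains that the energy form of $\mathcal{L}$ evaluated at $u$ vanishes; since that form is coercive on $X_0^1$ (its $-\Delta$ part already is, by the Poincar\'e inequality, and its $(-\Delta)^s$ part is nonnegative), it follows that $u\equiv 0$. Finally, the quantitative estimate is obtained from $\Delta u=(-\Delta)^su-f$, the classical $W^{2,p}$-bound above, the inequality $\|(-\Delta)^su\|_{L^p(\Omega)}\le C\|u\|_{W^{1,p}(\Omega)}$, and the interpolation inequality $\|u\|_{W^{1,p}(\Omega)}\le\varepsilon\|u\|_{W^{2,p}(\Omega)}+C_\varepsilon\|u\|_{L^p(\Omega)}$, absorbing the $\varepsilon$-term into the left-hand side.

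\emph{Main obstacle.} I expect the delicate point to be exactly the endpoint $s=\tfrac12$: there $(-\Delta)^s$ gains no integrability by itself, so the bootstrap in the injectivity step must borrow it from the Sobolev embedding on $\Omega$, and one must ensure that the intermediate solutions keep vanishing on $\partial\Omega$ so that their zero extensions stay in the correct spaces. This is also why the restriction $s\in\left(0,\tfrac12\right]$ is essential — for $s>\tfrac12$ the term $(-\Delta)^s$ is no longer lower order with respect to $-\Delta$ on $W^{2,p}(\Omega)\cap W^{1,p}_0(\Omega)$, and the sharp H\"older exponent in the companion Theorem~\ref{theorem C^1 alpha} is lost (cf. Remark~\ref{REMARK for some beta}). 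As a minor alternative to the bootstrap, injectivity could instead be derived from a maximum principle for $\mathcal{L}$, once one has enough regularity to apply it.
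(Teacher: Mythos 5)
Your proposal is correct in its essentials, but it follows a genuinely different route from the paper. You reduce everything to the Calder\'on--Zygmund isomorphism $-\Delta\colon W^{2,p}(\Omega)\cap W^{1,p}_0(\Omega)\to L^p(\Omega)$, prove that $(-\Delta)^s$ (acting on zero extensions) is bounded on $W^{1,p}(\mathbb{R}^n)$ into $L^p$ by Mikhlin/Riesz-transform arguments, conclude compactness via Rellich, and then invoke the Fredholm alternative, with injectivity obtained by a bootstrap up to the energy space and the coercive quadratic form; uniqueness and the a priori bound then come essentially for free. The paper instead works entirely at the level of the integral representation: it proves quantitative interpolation estimates $\|(-\Delta)^su\|_p\le C[\epsilon\|u\|_{W^{2,p}}+\tau(\epsilon)\|u\|_p]$ (Lemmata~\ref{lemma Lp estimate about fractional operator}, \ref{lemma Lp estimate about 1/2 operator}) by splitting the kernel and using the extension operator, solves the $\lambda$-perturbed problem $-\Delta u+(-\Delta)^su+\lambda u=f$ by a Banach contraction (Lemmata~\ref{lemma existence of lambda  }, \ref{lemma existence of lambda 1/2  }), constructs a barrier and applies the maximum principle of \cite{MR4387204} to get $\|u\|_\infty\le c(\lambda)\|h\|_\infty$ with $c(\lambda)<1/\lambda$ (Lemmata~\ref{lemma existence of w}--\ref{lemma infty bounded}), and finally removes the $\lambda$-term through an explicit iteration $-\Delta u_{k+1}+(-\Delta)^su_{k+1}+\lambda u_{k+1}=f+\lambda u_k$, an integrability bootstrap, an $L^\infty$ contraction and a weak limit. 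What your approach buys is brevity and a clean uniqueness statement (injectivity plus index zero), avoiding the barrier and the iteration altogether; what the paper's approach buys is self-contained kernel estimates that are reused elsewhere (Lemma~\ref{Lp estimate for some p} and Remark~\ref{REMARK for some beta} for $s>\frac12$, and the final absorption argument in the proof of Theorem~\ref{theorem w 2p}), plus a constructive scheme tied to the maximum-principle machinery. Two points you should make explicit if you write this up: (i) the identification, a.e.\ in $\Omega$, of the principal-value definition of $(-\Delta)^s$ on the zero extension of a $W^{2,p}(\Omega)\cap W^{1,p}_0(\Omega)$ function with the Fourier-multiplier operator you estimate (standard, but your whole boundedness and compactness argument rests on it, whereas the paper never leaves the PV formulation); and (ii) in the bootstrap for $1<p<2$, the trace-consistency argument showing the intermediate solutions stay in $W^{1,q}_0(\Omega)$, which you correctly flag as the delicate step at $s=\tfrac12$. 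Your heuristic for why $s\le\tfrac12$ is sharp (the zero extension is only $W^{1,p}(\mathbb{R}^n)$, so $(-\Delta)^s$ ceases to be of lower order for $s>\tfrac12$) is consistent with the restriction $p<\frac{n}{2s-1}$ in Lemma~\ref{Lp estimate for some p}.
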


 \section{Some preliminary facts}\setcounter{equation}{0}\label{sec:preliminary}
In this section, we provide several definitions and basic facts on the weak solutions of the Dirichlet problem associated with the mixed operator $ \mathcal{L} $ in \eqref{eq:g main equation}, that is
\begin{equation*}
\begin{cases}
\mathcal{L} u= g(x,u) \quad &\text{in } \Omega, \\
u=0 & \text{in } \mathbb{R}^{n}\backslash \Omega.
\end{cases}
\end{equation*}
See also~\cite{SVWZ} for the existence and multiplicity results of weak solutions.

Let $ s\in(0,1) $ be given and $ \Omega\subset \mathbb{R}^{n}$ be an open bounded set with $ C^{1} $ boundary where $n>2s$.
A ``natural'' space to consider is the following (see e.g.~\cite{MR4387204}):
\begin{equation*}
X^1:=\left\{ u:\mathbb{R}^{n}\rightarrow \mathbb{R} \text{ is Lebesgue measurable: } u|_{\Omega} \in H^{1}(\Omega);\frac{|u(x)-u(y)|}{|x-y|^{\frac{n+2s}{2}}}\in L^{2}(\mathcal{\mathbb{R}}^{2n}) \right\}.
\end{equation*}
The norm of $u\in X^1 $ is defined as follows:
\begin{equation*}
\|u\|_{X^1}=\bigg(\|u\|_{H^{1}(\Omega)}^{2}+\int\!\!\!\int\nolimits_{\mathbb{R}^{2n}}\frac{|u(x)-u(y)|^2}{|x-y|^{n+2s}}\, dxdy\bigg)^{\frac{1}{2}} .
\end{equation*}
Our working space for the weak solutions would be:
\begin{equation}
X_{0}^1\equiv \left\{u\in X^1: u=0 \text{ a.e. in  } \mathbb{R}^n\backslash \Omega \right\}.\label{workingspace}
\end{equation}
\begin{remark}\label{X01impliesH01}{\rm
	Since $ \Omega $ has $ C^{1} $ boundary, any function $ u\in X_{0}^1 $ satisfies
	\begin{equation*}
	u|_{\Omega} \in H^{1}_{0}(\Omega).
	\end{equation*}}
\end{remark}
Due to Remark~\ref{X01impliesH01}, the norm in $ X_{0}^1 $ is also equivalent to
\begin{equation}
\|u\|_{X_{0}^1} :=\bigg(\|\nabla u\|^{2}_{2}+\int\!\!\!\int\nolimits_{\mathbb{R}^{2n}}\frac{|u(x)-u(y)|^2}{|x-y|^{n+2s}}\, dxdy\bigg)^{\frac{1}{2}}.  \label{Xnorm}
\end{equation}
Obviously, $X^1_{0} $ is a Hilbert space equipped with the inner product
\begin{equation*}
\left\langle u,v \right\rangle_{X_{0}^1}=\int_{\Omega} \nabla u\cdot  \nabla v\, dx+\int\!\!\!\int_{\mathbb{R}^{2n}}\frac{(u(x)-u(y))(v(x)-v(y))}{|x-y|^{n+2s}}\, dxdy, \quad \forall u,v \in X_{0}^1.
\end{equation*}
\begin{Definition}[Weak solution]\label{definition of weak solution}
	We say that $ u\in X_{0}^1 $ is a weak solution of the mixed elliptic equation \eqref{eq:g main equation}, if $ u $ satisfies
\begin{equation}
	\displaystyle\int_{\Omega}\left\langle \nabla u,\nabla \varphi \right\rangle + \displaystyle\int\!\!\!\int\nolimits_{\mathbb{R}^{2n}}\frac{(u(x)-u(y))(\varphi(x)-\varphi(y))}{|x-y|^{n+2s}}\, dxdy
	=\int_{\Omega}g(x,u(x))\varphi(x)\, dx,      \label{weaksolution}
	\end{equation}
	for any $ \varphi \in X_{0}^1 $.
	\end{Definition}

To begin with, we investigate some key facts of $ X_0^1 $ in the following embedding lemma.
\begin{Lemma}\label{lemma: embedding continuous}
	The embedding $ X_{0}^1\hookrightarrow L^{2^*_s}(\Omega) $ is continuous where $ 2^*_s=\frac{2n}{n-2s}. $
\end{Lemma}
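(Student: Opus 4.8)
The plan is to take advantage of the fact that a function $u\in X_0^1$ vanishes almost everywhere outside the bounded set $\Omega$, so that, after extending it by zero, it can be treated as a function on all of $\mathbb{R}^n$ with finite Gagliardo $H^s$-seminorm; one can then simply invoke the critical fractional Sobolev inequality on the whole space and restrict to $\Omega$. In fact the Laplacian part of the norm is not even needed for this step: the double-integral part of $\|\cdot\|_{X_0^1}$ already controls $\|\cdot\|_{L^{2^*_s}(\Omega)}$.

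Concretely, I would proceed as follows. First, for $u\in X_0^1$ one has $u|_\Omega\in H^1(\Omega)\hookrightarrow L^2(\Omega)$, while $u=0$ a.e.\ in $\mathbb{R}^n\setminus\Omega$; hence $u\in L^2(\mathbb{R}^n)$, and since moreover $\iint_{\mathbb{R}^{2n}}|u(x)-u(y)|^2|x-y|^{-n-2s}\,dx\,dy<+\infty$ by the very definition of $X^1$, we conclude that $u\in H^s(\mathbb{R}^n)=W^{s,2}(\mathbb{R}^n)$. Second, I would apply the fractional Sobolev inequality: for $s\in(0,1)$ and $n>2s$ there is $C=C(n,s)>0$ such that
\[
\|v\|_{L^{2^*_s}(\mathbb{R}^n)}^2\leqslant C\int\!\!\!\int\nolimits_{\mathbb{R}^{2n}}\frac{|v(x)-v(y)|^2}{|x-y|^{n+2s}}\,dx\,dy\qquad\text{for every }v\in H^s(\mathbb{R}^n),
\]
(see, e.g., \cite{DMVbook} and the references therein). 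Applying this with $v=u$ and then restricting the $L^{2^*_s}$-norm from $\mathbb{R}^n$ to $\Omega\subset\mathbb{R}^n$ yields
\[
\|u\|_{L^{2^*_s}(\Omega)}\leqslant\|u\|_{L^{2^*_s}(\mathbb{R}^n)}\leqslant C^{1/2}\bigg(\int\!\!\!\int\nolimits_{\mathbb{R}^{2n}}\frac{|u(x)-u(y)|^2}{|x-y|^{n+2s}}\,dx\,dy\bigg)^{1/2}\leqslant C^{1/2}\|u\|_{X_0^1},
\]
which is precisely the asserted continuity of the embedding.

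I do not expect any serious obstacle here; the statement is essentially a bookkeeping assembly of standard facts, and the only points deserving (routine) care are checking that an arbitrary $u\in X_0^1$ genuinely belongs to the class of functions for which the whole-space fractional Sobolev inequality is available — which is immediate once $u$ is extended by zero, as above — and recording that the constant depends only on $n$ and $s$. As an alternative argument one could instead use Remark~\ref{X01impliesH01} together with the classical Sobolev embedding $H_0^1(\Omega)\hookrightarrow L^{2^*}(\Omega)$, the inequality $2^*_s\leqslant 2^*=\frac{2n}{n-2}$ (valid since $s\leqslant1$) and the inclusion $L^{2^*}(\Omega)\hookrightarrow L^{2^*_s}(\Omega)$ for the bounded domain $\Omega$, using only the $\|\nabla u\|_2$ part of the norm; this route requires the minor case distinction $n\geqslant3$ versus $n\leqslant2$, which the fractional-Sobolev argument above avoids.
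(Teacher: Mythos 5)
Your argument is correct and follows essentially the same route as the paper: both verify that a function in $X_0^1$, extended by zero, lies in $H^s(\mathbb{R}^n)$, invoke the whole-space fractional Sobolev inequality with a constant depending only on $n$ and $s$, and bound the Gagliardo seminorm by $\|\cdot\|_{X_0^1}$ after restricting to $\Omega$. The additional details you supply (the $L^2(\mathbb{R}^n)$ membership and the alternative argument via $H_0^1(\Omega)$) are fine but not needed beyond what the paper already cites from the fractional Sobolev literature.
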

\begin{proof}
	Thanks to \cite{DPV}, we know for all $ v\in X_0^1 $, $ v\in H^s(\mathbb{R}^n), $ and
	\begin{equation*}
	\|v\|^2_{L^{2^*_s}(\mathbb{R}^n)}\leqslant S\int\!\!\!\int\nolimits_{\mathbb{R}^{2n}}\frac{|v(x)-v(y)|^2}{|x-y|^{n+2s}}\, dxdy,
	\end{equation*}
	where $ S $ is a positive constant depending on $ n $ and $ s $. It follows that
	\begin{equation*}
	\|v\|^2_{2^*_s}=\|v\|^2_{L^{2^*_s}(\mathbb{R}^n)}\leqslant S\int\!\!\!\int\nolimits_{\mathbb{R}^{2n}}\frac{|v(x)-v(y)|^2}{|x-y|^{n+2s}}\, dxdy\leqslant S\|v\|_{X_0^1}^2.
	\end{equation*}
	Thus, the embedding $ X_{0}^1\hookrightarrow L^{2^*_s}(\Omega) $ is continuous.
\end{proof}

Let $$\mathscr{C}_0^\infty:=\left\{u\in C(\mathbb{R}^n): u=0 \text{ in }\mathbb{R}^n\backslash \Omega, \;u|_{\Omega}\in C_0^\infty(\Omega) \right\} $$ and denote by $\overline{\mathscr{C}_0^\infty}$ the closure of $ \mathscr{C}_0^\infty $ with respect to $X_0^1$-norm. Then we have
\begin{Lemma}\label{lemma densely embedding}
	 $ \overline{\mathscr{C}_0^\infty} = X_0^1 $.
\end{Lemma}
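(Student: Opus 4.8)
The plan is to prove the two inclusions separately; the nontrivial one, $X_0^1\subseteq\overline{\mathscr{C}_0^\infty}$, reduces almost immediately to Remark~\ref{X01impliesH01} combined with the elementary continuous embedding $H^1(\mathbb{R}^n)\hookrightarrow H^s(\mathbb{R}^n)$ valid for $s\in(0,1]$.

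\textbf{Step 1 ($\overline{\mathscr{C}_0^\infty}\subseteq X_0^1$).} I would first note that $\mathscr{C}_0^\infty\subseteq X_0^1$: if $u\in\mathscr{C}_0^\infty$, then $u|_\Omega\in C_0^\infty(\Omega)\subset H^1(\Omega)$, $u$ vanishes a.e.\ outside $\Omega$, and, being Lipschitz with compact support, $u$ has finite Gagliardo seminorm over $\mathbb{R}^{2n}$ (split the integration into $\{|x-y|<1\}$ and $\{|x-y|\geqslant 1\}$). Since $X_0^1$ is complete for $\|\cdot\|_{X_0^1}$ (it is a Hilbert space), it contains all $\|\cdot\|_{X_0^1}$-limits of sequences in $\mathscr{C}_0^\infty$, whence the inclusion.

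\textbf{Step 2 ($X_0^1\subseteq\overline{\mathscr{C}_0^\infty}$).} Fix $u\in X_0^1$. By Remark~\ref{X01impliesH01}, $u|_\Omega\in H_0^1(\Omega)$, so there is a sequence $(v_k)_k\subset C_0^\infty(\Omega)$ with $v_k\to u|_\Omega$ in $H^1(\Omega)$. Extending each $v_k$ by $0$ outside $\Omega$ gives $v_k\in\mathscr{C}_0^\infty$, and $w_k:=v_k-u$ vanishes outside $\Omega$ and coincides with the zero-extension of $w_k|_\Omega\in H_0^1(\Omega)$; hence $w_k\in H^1(\mathbb{R}^n)$ with $\|w_k\|_{H^1(\mathbb{R}^n)}=\|v_k-u\|_{H^1(\Omega)}\to 0$. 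The remaining point is to dominate the Gagliardo part of $\|w_k\|_{X_0^1}$ by $\|w_k\|_{H^1(\mathbb{R}^n)}$: for any $w\in H^1(\mathbb{R}^n)$, splitting the integral over $\{|x-y|\geqslant 1\}$ (using $|w(x)-w(y)|^2\leqslant 2|w(x)|^2+2|w(y)|^2$) and over $\{|x-y|<1\}$ (using $w(x)-w(y)=\int_0^1\nabla w(y+t(x-y))\cdot(x-y)\,dt$ together with Fubini and a density argument) yields
\[
\iint_{\mathbb{R}^{2n}}\frac{|w(x)-w(y)|^2}{|x-y|^{n+2s}}\,dx\,dy\leqslant C(n,s)\,\|w\|_{H^1(\mathbb{R}^n)}^2,
\]
equivalently the continuous embedding $H^1(\mathbb{R}^n)\hookrightarrow H^s(\mathbb{R}^n)$, which on the Fourier side is just $(1+|\xi|^2)^s\leqslant 1+|\xi|^2$. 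Applying this with $w=w_k$ and using $\|\nabla w_k\|_{L^2(\Omega)}=\|\nabla w_k\|_{L^2(\mathbb{R}^n)}\leqslant\|w_k\|_{H^1(\mathbb{R}^n)}$, the definition \eqref{Xnorm} of the norm gives $\|v_k-u\|_{X_0^1}^2\leqslant\big(1+C(n,s)\big)\|v_k-u\|_{H^1(\Omega)}^2\to 0$, so that $u\in\overline{\mathscr{C}_0^\infty}$.

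\textbf{Expected difficulty.} There is no genuine obstacle here: once Remark~\ref{X01impliesH01} is available, the only two facts requiring a word of justification are that the zero-extension of an $H_0^1(\Omega)$ function belongs to $H^1(\mathbb{R}^n)$ with the same norm (true for an arbitrary open set, by approximation with $C_0^\infty(\Omega)$ functions) and the embedding $H^1(\mathbb{R}^n)\hookrightarrow H^s(\mathbb{R}^n)$; together they turn an $H^1(\Omega)$-approximation of $u$ by $C_0^\infty(\Omega)$ functions into an $X_0^1$-approximation. Alternatively, one could bypass Remark~\ref{X01impliesH01} and argue by truncation plus mollification as in the density theory for fractional Sobolev spaces, but the route above is shorter in the present mixed-order setting.
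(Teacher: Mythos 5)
Your proposal is correct and follows essentially the same route as the paper: the easy inclusion via $\mathscr{C}_0^\infty\subset X_0^1$ plus completeness, and the hard inclusion by approximating $u|_\Omega$ in $H_0^1(\Omega)$ with $C_0^\infty(\Omega)$ functions extended by zero and then dominating the Gagliardo seminorm by the $H^1$ norm through the same near/far splitting of $|x-y|$ and the fundamental theorem of calculus that the paper uses. The only difference is presentational: you package the key bound as the whole-space embedding $H^1(\mathbb{R}^n)\hookrightarrow H^s(\mathbb{R}^n)$ applied to the zero-extension of $v_k-u$, concluding directly that $\|v_k-u\|_{X_0^1}\to 0$, whereas the paper estimates the cross term over $\Omega\times(\mathbb{R}^n\setminus\Omega)$ by hand and argues via a Cauchy sequence in $X_0^1$.
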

\begin{proof}
	(i). We claim that
	\begin{equation}\label{We70P} \mathscr{C}_0^\infty\subset X^1_0 .\end{equation} Note that this and the fact that $ X_0^1 $ is complete would imply that $ \overline{\mathscr{C}_0^\infty}\subset X_0^1 $.
	
To check~\eqref{We70P} we proceed as follows. For any $ u\in \mathscr{C}_0^\infty $, there exists a compact subset $ K\subset\Omega $ such that $ u\equiv 0 $ in $ \mathbb{R}^n\backslash K$. We split the square of the norm $\|u\|_{X_0^1}$ into three parts
	\begin{equation} \label{eq:decomposition of norm}
	\begin{split}
	\|u\|_{X_0^1}^2&=\|u\|^2_{H^1_0(\Omega)}+	\int\!\!\!\int\nolimits_{\mathbb{R}^{2n}}\frac{|u(x)-u(y)|^2}{|x-y|^{n+2s}}\, dxdy \\
	&=\|u\|^2_{H^1_0(\Omega)}+\int\!\!\!\int_{\Omega\times\Omega}\frac{|u(x)-u(y)|^2}{|x-y|^{n+2s}}\, dxdy+2\int_{\Omega}\int_{\mathbb{R}^{n}\backslash\Omega}\frac{|u(x)|^2}{|x-y|^{n+2s}}\, dydx\\
	&=\uppercase\expandafter{\romannumeral 1}+\uppercase\expandafter{\romannumeral 2}+\uppercase\expandafter{\romannumeral 3}.
	\end{split}
	\end{equation}
{F}rom \cite[Proposition~2.2]{DPV}, we obtain\footnote{As a notation remark,
the convention used here is that
$$ \int_{A}\int_B f(x,y)\, dydx:=
\int_{A}\left(\int_B f(x,y)\, dy\right)dx.$$} that
	\begin{equation}\label{eq:Omega times Omega}
	\uppercase\expandafter{\romannumeral 2}\leqslant c_0\|u\|^2_{H^1_0(\Omega)}, \qquad \text{where $c_0=c_0(n,s) $ is a constant.}
	\end{equation}
	 It remains to show that $\uppercase\expandafter{\romannumeral 3}$ is bounded. For any $ y\in \mathbb{R}^n\backslash \Omega$, we have that
	\begin{equation*}
		\frac{|u(x)|^2}{|x-y|^{n+2s}}\leqslant\chi_K(x)|u(x)|^2\sup\limits_{x\in K}\frac{1}{|x-y|^{n+2s}},
	\end{equation*}
	and so
	\begin{equation}\label{eq: K outside}
		\int_{\Omega}\int_{\mathbb{R}^{n}\backslash\Omega}\frac{|u(x)|^2}{|x-y|^{n+2s}}\, dydx\leqslant \int_{\mathbb{R}^{n}\backslash\Omega}\frac{1}{\text{  dist}(y,\partial K)^{n+2s}}\, dy\,\|u\|^2_2.
	\end{equation}
We stress that the integral in \eqref{eq: K outside} is finite since dist$ (\partial \Omega,\partial K)\geqslant\alpha>0$. Combining \eqref{eq:decomposition of norm} and \eqref{eq: K outside}, we conclude that~$ u\in X_0^1$, thus proving~\eqref{We70P} as desired.
	
	(ii). On the other hand,
	since $ C_0^\infty(\Omega) $ is dense in $ H_0^1(\Omega) $, for all~$  u\in X_0^1\subset H_0^1(\Omega) $, there exists  $ \{u_m\}\subset C_0^\infty(\Omega) $ such that
	\begin{equation}\label{eq:C dense H}
	\|u_m-u\|_{H_0^1(\Omega)}\rightarrow 0 \text{ and } u_m\rightarrow u \text{ a.e. in } \Omega,\quad m\rightarrow +\infty.
	\end{equation}
	We define $ u_m\equiv 0 $ in $ \mathbb{R}^n\backslash \Omega $ for every $m\in \mathbb{N}$. Then $ \{u_m\}\subset \mathscr{C}_0^\infty $.
	
	We claim  that \begin{equation}\label{umisCA}{\mbox{$ \{u_m\} $ is a Cauchy sequence in $ X_0^1 $. 	}}\end{equation}
	Indeed, notice that half of $\uppercase\expandafter{\romannumeral 3}$ in \eqref{eq:decomposition of norm}  could be divided into the following two integrals and be estimated respectively by
	\begin{equation}\label{eq: >1 of third term }
	\begin{split}
	&\int_{\Omega}\int_{(\mathbb{R}^{n}\backslash\Omega)\cap\{|x-y|\geqslant 1\}}\frac{|(u_j-u_k)(x)|^2}{|x-y|^{n+2s}}\, dydx\\
	\leqslant&	\int_{\Omega}\bigg(\int_{\{|z|\geqslant 1\}}\frac{1}{|z|^{n+2s}}\, dz\bigg)|(u_j-u_k)(x)|^2dx<c_1\|(u_j-u_k)\|_{H_0^1(\Omega)}^2,
	\end{split}
	\end{equation}
	and
	\begin{equation}
	\begin{split}
	&\int_{\Omega}\int_{(\mathbb{R}^{n}\backslash\Omega)\cap\{|x-y|< 1\}}\frac{|(u_j-u_k)(x)|^2}{|x-y|^{n+2s}}\, dydx\\
	\leqslant& \int_{\Omega}\int_{(\mathbb{R}^{n}\backslash\Omega)\cap\{|z|< 1\}}\frac{\int_{0}^{1}|\nabla (u_j-u_k)(x+tz)|^2\, dt}{|z|^{n+2s-2}}\, dzdx\\
	\leqslant& \int_{\mathbb{R}^n}\int_{(\mathbb{R}^{n}\backslash\Omega)\cap\{|z|< 1\}}\frac{\int_{0}^{1}|\nabla (u_j-u_k)(x+tz)|^2\, dt}{|z|^{n+2s-2}}\, dzdx\\
	\leqslant&\ \ \| u_j-u_k \|^2_{H^1(\mathbb{R}^n)}\int_{\{|z|< 1\}}\frac{1}{|z|^{n+2s-2}}\, dz
	\leqslant c_2\|u_j-u_k\|^2_{H^1_0(\Omega)},
	\end{split}\label{eq:<1 of third term cauchy }
	\end{equation}
	where  $ c_1,c_2>0 $ are  constants independent of $\{u_m\}$.
	Due to \eqref{eq:decomposition of norm}, \eqref{eq:Omega times Omega}
	and~\eqref{eq: K outside}, we thereby obtain~\eqref{umisCA}
	as desired.
This yields that $X_0^1\subset  \overline{\mathscr{C}_0^\infty}$. 	
\end{proof}

 \begin{remark}{\rm
		Thanks to \eqref{eq:Omega times Omega}, \eqref{eq: >1 of third term } and \eqref{eq:<1 of third term cauchy }, we see that, for every $ u\in X_0^1, $
		\begin{equation*}
		\int\!\!\!\int\nolimits_{\mathbb{R}^{2n}}\frac{|u(x)-u(y)|^2}{|x-y|^{n+2s}}\, dxdy\leqslant C\|u\|_{H^1_0(\Omega)},
		\end{equation*}
		which implies that the norm~$ \|\cdot\|_{X_0^1}$ is equivalent to~$\|\cdot\|_{H_0^1(\Omega)} $ in the space $X_0^1$.}
\end{remark}

\section{$L^{\infty}$ regularity for any weak solution}\label{sec:regularity}\setcounter{equation}{0}

In this section, we show that any weak solution $u\in X_0^1$ of \eqref{eq:g main equation} is actually of 
 $L^{\infty}(\Omega)$ class. To prove this result, we will use the Moser iteration method.
\begin{proof}[ Proof of Theorem \ref{th: regularity}]
	We will divide the proof into the following three steps.
	
	\par\noindent \textbf{Step 1}. We construct an auxiliary function $\varphi$ and provide several fundamental properties of $\varphi$, which are useful for the iterative procedure.
	\vspace{2mm}
	
	Given $ \beta> 1 $ and $ T>0 $, we define
	\begin{equation*}
	\varphi(t)=
	\begin{cases}
	-\beta T^{\beta-1}(t+T)+T^\beta, &\text{  if }t\leqslant -T\\
	|t|^\beta, &\text{  if } -T<t<T,\\
	\beta T^{\beta-1}(t-T)+T^\beta, &\text{  if }t\geqslant T.
	\end{cases}
	\end{equation*}
	From the definition of $ \varphi(t) $, we may compute $ \varphi' $ and $ \varphi ''$ (in the sense of distributions), that is
	\begin{equation*}
	\varphi'(t)=
	\begin{cases}
	-\beta T^{\beta-1}, &\text{  if }t< -T\\
	-\beta(-t)^{\beta-1}, &\text{  if } -T\leqslant t\leqslant 0,\\
	\beta t^{\beta-1}, &\text{  if } 0\leqslant t\leqslant T,\\
	\beta T^{\beta-1}, &\text{  if } t>T,
	\end{cases}
	\end{equation*}
	and
	\begin{equation*}
	\varphi''(t)=\begin{cases}
	\beta(\beta-1)t^{\beta-2}, \qquad & 0<t<T,\\
	\beta(\beta-1)(-t)^{\beta-2}, \qquad & -T<t<0,\\
	0& \text{ otherwise}.
	\end{cases}
	\end{equation*}
	
	We observe that $ \varphi(u)\in X_0^1 $.
	As a matter of fact, from the definition of $ \varphi $, we have
	\begin{itemize}
	\item $\varphi(u)=0 $ a.e. in $ \mathbb{R}^n\backslash \Omega $;
	\item $ \varphi $ is convex and Lipschitz with Lipschitz constant $ L=\beta T^{\beta-1} $;
	\item  one can calculate that
	\begin{equation}
	\begin{split}
	\|\varphi(u)\|^2_{X_0^1}&=\|\nabla( \varphi(u))\|^2_2+\int\!\!\!\int\nolimits_{\mathbb{R}^{2n}}\frac{|\varphi(u(x))-\varphi(u(y))|^2}{|x-y|^{n+2s}}\, dxdy  \\
	&\leqslant L^2\bigg(\|\nabla u\|^2_2+\int\!\!\!\int\nolimits_{\mathbb{R}^{2n}}\frac{|u(x)-u(y)|^2}{|x-y|^{n+2s}}\, dxdy\bigg)<\infty.
	\label{4.1}
	\end{split}
	\end{equation}
	\end{itemize}
	
	Moreover, for  given  $ x$, $ y\in \mathbb{R}^n $, we deduce from the convexity of~$\varphi$ that
	\begin{equation*}
	\varphi(u(x))-\varphi(u(y))\leqslant \varphi'(u(x))(u(x)-u(y)),
	\end{equation*}
	which implies that
	\begin{equation}
	(-\Delta)^s\varphi(u)\leqslant \varphi'(u)(-\Delta)^su \label{4.2}
	\end{equation}
	in the sense of distribution.
	\vspace{2mm}
	
	\par\noindent \textbf{Step 2}. Now we give an estimate of  $\|\varphi(u)\|_{2_s^*}$.
	\vspace{2mm}	
	For this, we recall that
	\begin{equation*}
	\|u\|^2_{L^{2^*_s}(\mathbb{R}^n)}\leqslant S\int\!\!\!\int\nolimits_{\mathbb{R}^{2n}}\frac{|u(x)-u(y)|^2}{|x-y|^{n+2s}}\, dxdy,
	\end{equation*}
	where $ S$ is a constant depending only on $n$ and $s$.
	
	Hence, by \eqref{4.2}, we have that
	\begin{equation}
	\begin{split}
	\|\varphi(u)\|^2_{2^*_s}&\leqslant S \int\!\!\!\int\nolimits_{\mathbb{R}^{2n}}\frac{|\varphi(u(x))-\varphi(u(y))|^2}{|x-y|^{n+2s}}\, dxdy\\
	&=S \int\!\!\!\int\nolimits_{\mathbb{R}^{2n}}\left[\varphi(u(x))\frac{\varphi(u(x))-\varphi(u(y))}{|x-y|^{n+2s}}-\varphi(u(y))\frac{\varphi(u(x))-\varphi(u(y))}{|x-y|^{n+2s}}\right]\, dxdy\\
	&=S \int\!\!\!\int\nolimits_{\mathbb{R}^{2n}}\left[\varphi(u(x))\frac{\varphi(u(x))-\varphi(u(y))}{|x-y|^{n+2s}}-\varphi(u(x))\frac{\varphi(u(y))-\varphi(u(x))}{|y-x|^{n+2s}}\right]\, dxdy\\
	&=2S\int\!\!\!\int\nolimits_{\mathbb{R}^{2n}}\varphi(u(x))\frac{\varphi(u(x))-\varphi(u(y))}{|x-y|^{n+2s}}\,dxdy\\&
	=2S\int_{\mathbb{R}^{n}}\varphi(u)(-\Delta)^s\varphi(u)\, dx\\
	& \leqslant 2S\int_{\mathbb{R}^{n}}\varphi(u)\, \varphi'(u) \,(-\Delta)^su\, dx.
	\end{split}\label{3.2BIS}
	\end{equation}
	We multiply the first equation of (\ref{eq:g main equation}) by  $ \varphi(u)\varphi'(u) $ and  integrate over $ \mathbb{R}^n $
	\begin{equation}\label{integration of the main equation}
	-\int_{\mathbb{R}^{n}}\varphi(u)\varphi'(u)\Delta u\,dx+\int_{\mathbb{R}^{n}}\varphi(u)\,\varphi'(u)\,(-\Delta)^su\, dx=\int_{\mathbb{R}^{n}}\varphi(u)\varphi'(u)g(x,u)\, dx.
	\end{equation}
	The first term on the left side of \eqref{integration of the main equation} above could be rewritten as
	\begin{equation}\label{rewriting of the first term}
	\begin{split}
	&-\int_{\Omega} \varphi(u)\varphi'(u)\Delta u\, dx= \int_{\Omega}\nabla u\cdot \nabla(\varphi(u)\varphi'(u))\, dx\\
	=&\int_{\Omega}|\nabla u|^2|\varphi'(u)|^2\, dx+\int_{\Omega}|\nabla u|^2\varphi(u)\,\varphi''(u)\,  dx\geqslant0.
	\end{split}
	\end{equation}
        The equations \eqref{rewriting of the first term} and \eqref{integration of the main equation} lead to
	\begin{align}
	\int_{\mathbb{R}^{n}}\varphi(u)\,\varphi'(u)\,(-\Delta)^su  dx
	&\leqslant\int_{\Omega}\varphi(u)\,\varphi'(u)\,g(x,u)\, dx. \label{4.5}
	\end{align}
	Hence, {recalling~\eqref{eq: H 1}} and~\eqref{3.2BIS}, we obtain
	\begin{align*}
	\|\varphi(u)\|^2_{2^*_s}&\leqslant 2S\int_{\Omega}\varphi(u)\varphi'(u)g(x,u)\, dx\\&
	\leqslant 2S \int_{\Omega}\varphi(u)|\varphi'(u)||g(x,u)|\, dx\\
	&\leqslant C \int_{\Omega}\varphi(u)|\varphi'(u)|(1+|u|^{2^*_s-1})\, dx\\
	& =C\bigg(\int_{\Omega}\varphi(u)|\varphi'(u)|\,dx +\int_{\Omega}\varphi(u)|\varphi'(u)||u|^{2^*_s-1}\, dx \bigg),
	\end{align*}
	where the constant $ C$ depends on $ n,s,\Omega$ and $c$.
	
	Using the estimates $ \varphi(u)\leqslant |u|^\beta $, $ |\varphi'(u)|\leqslant \beta |u|^{\beta-1} $ and $ |u\varphi'(u)|\leqslant \beta \varphi(u) $, we have
	\begin{equation}\label{main estimate}
	\bigg(\int_{\Omega}(\varphi(u))^{2^*_s}\,dx\bigg)^{2/2^*_s}\leqslant C\beta\bigg(\int_{\Omega}|u|^{2\beta-1}\,dx +\int_{\Omega}(\varphi(u))^2 |u|^{2^*_s-2}\, dx \bigg).
	\end{equation}
	Notice that $ C $ is a positive constant that does not depend on $ \beta $ and  that the last integral on the right-hand-side of the inequality \eqref{main estimate} is well defined for every $ T >0 $, since
	\begin{align*}
	\int_{\Omega}(\varphi(u))^2 |u|^{2^*_s-2}\, dx &=\int_{\left\{|u|\leqslant T\right\}} (\varphi(u))^2 |u|^{2^*_s-2}\, dx+\int_{\left\{|u|>T\right\}}(\varphi(u))^2 |u|^{2^*_s-2}\, dx\\
	&\leqslant T^{2\beta-2}\int_{\Omega}|u|^{2^*_s}\, dx+(\beta+1) T^{\beta-1}\int_{\Omega} |u|^{2^*_s}\, dx<+\infty.
	\end{align*}
	\vspace{2mm}
	
	\par\noindent \textbf{Step 3}.  We apply  Moser iteration method and the limiting arguments of $L^p$-norm  to prove the result.
	
	\vspace{2mm}
	
	First, we claim if $\beta_1$ is such that $ 2\beta_1-1=2^*_s $,   then  $ u\in L^{2^*_s\beta_1}(\Omega)$. To see this, for any given $ R>0 $, we apply the H\"older inequality  to the last integral in \eqref{main estimate} and get
	\begin{equation}
	\begin{split}
	&\int_{\Omega}(\varphi(u))^2 |u|^{2^*_s-2}\, dx=\int_{\left\{|u|\leqslant R\right\}}(\varphi(u))^2 |u|^{2^*_s-2}\, dx +\int_{\left\{|u|>R\right\}} (\varphi(u))^2 |u|^{2^*_s-2}\, dx \\
	 \leqslant& \int_{\left\{|u|\leqslant R\right\}}\frac{(\varphi(u))^2}{|u|} R^{2^*_s-1}\, dx+\bigg(\int_{\Omega}(\varphi(u))^{2^*_s}\, dx\bigg)^{2/2^*_s}\bigg(\int_{\left\{|u|>R\right\}}|u|^{2^*_s}\, dx\bigg)^{\frac{2^*_s-2}{{2^*_s}}}.
	\end{split}\label{estimate of phi u}
	\end{equation}
	By the Monotone Convergence Theorem, one can choose $R>0$ large enough such that
	\begin{equation}\label{eq:C beta}
	\bigg(\int_{\left\{|u|>R\right\}}|u|^{2^*_s}\,dx\bigg)^{\frac{2^*_s-2}{{2^*_s}}}\leqslant \frac{1}{2C\beta_1},
	\end{equation}
	where $ C $ is the constant in \eqref{main estimate}. Therefore one can reabsorb the last term in \eqref{estimate of phi u} into the left hand side of \eqref{main estimate} to get
	\begin{equation*}
	\bigg(\int_{\Omega}(\varphi(u))^{2^*_s}\,dx\bigg)^{2/2^*_s}\leqslant 2C\beta_1\bigg(\int_{\Omega}|u|^{2^*_s}\, dx+ R^{2^*_s-1}\int_{\Omega}\frac{(\varphi(u))^2}{|u|}\, dx \bigg).
	\end{equation*}
	Now, using $\varphi(u)\leqslant |u|^{\beta_1}$, we get that the terms in the right hand side of the above inequality is bounded and independent of $T$. Sending $ T\rightarrow +\infty $, we obtain that
	\begin{equation}\label{eq:u in L beta1}
	\bigg(\int_{\Omega}|u|^{2^*_s\beta_1}\,dx\bigg)^{2/2^*_s}\leqslant 2C\beta_1\bigg(\int_{\Omega}|u|^{2^*_s}\, dx+ R^{2^*_s-1}\int_{\Omega}|u|^{2^*_s}\, dx \bigg)<+\infty,
	\end{equation}
	which proves the claim.
	\vspace{2mm}
	
	Next, we will find an increasing unbounded sequence $ \beta_m $  such that
	\[ u\in L^{2_s^*\beta_{m}}(\Omega), \quad \forall m>1. \]
	To this end, let us suppose that~$ \beta>\beta_1 $. Thus, using that $ \varphi(u)\leqslant |u|^\beta $ in the right hand side of $ \eqref{main estimate} $ and letting $ T\rightarrow \infty $ we get
	\begin{equation}
	\bigg(\int_{\Omega}|u|^{2^*_s\beta}\,dx\bigg)^{2/2^*_s}\leqslant C\beta \bigg(\int_{\Omega} |u|^{2\beta-1}\,dx +\int_{\Omega} |u|^{2\beta+2^*_s-2}\, dx\bigg). \label{rough estimate}
	\end{equation}
We also remark that
	\begin{equation}\label{estimate of 2beta-1-norm}
	\begin{split}
	\int_{\Omega}|u|^{2\beta-1}\,dx&\leqslant \bigg(\int_{\Omega}|u|^{2\beta+2^*_s-2}\,dx\bigg)^{\frac{2\beta-1}{2\beta+2^*_s-2}}|\Omega|^{\frac{2^*_s-1}{2\beta+2^*_s-2}}\\
	&\leqslant \frac{2\beta-1}{2\beta+2^*_s-2}\int_{\Omega}|u|^{2\beta+2^*_s-2}\,dx+\frac{2^*_s-1}{2\beta+2^*_s-2}|\Omega|\\
	&\leqslant \int_{\Omega}|u|^{2\beta+2^*_s-2}\,dx+|\Omega|.
	\end{split}
	\end{equation}
Hence, by combining \eqref{rough estimate} with \eqref{estimate of 2beta-1-norm}, we conclude that
	\begin{align*}
	\bigg(\int_{\Omega}|u|^{2^*_s\beta}\,dx\bigg)^{2/2^*_s}&\leqslant  C\beta \left(|\Omega| +2\int_{\Omega} |u|^{2\beta+2^*_s-2}\, dx\right)\\
	&\leqslant 2C\beta(|\Omega|+1)\left(1+\int_{\Omega} |u|^{2\beta+2^*_s-2}\, dx\right).
	\end{align*} Moreover,
	by the formula $ (a+b)^2\leqslant 2(a^2+b^2) $, we see that
	\begin{align*}
	\left(1+\int_{\Omega} |u|^{2^*_s\beta}\, dx\right)^2&\leqslant 2+2\left[2C\beta(|\Omega|+1)\left(1+\int_{\Omega} |u|^{2\beta+2^*_s-2}\, dx \right)\right]^{2^*_s}.
	\end{align*}
	Therefore,
	\begin{equation}\label{estimate of iteration}
	\bigg(1+\int_{\Omega}|u|^{2^*_s\beta}\,dx\bigg)^{\frac{1}{2^*_s(\beta-1)}}\leqslant (C\beta)^{\frac{1}{2(\beta-1)}}\bigg(1+\int_{\Omega} |u|^{2\beta+2^*_s-2}\, dx\bigg)^{\frac{1}{2(\beta-1)}},
	\end{equation}
	where $ C$ is renamed independently of $ \beta.$
	
	For $ m\geqslant 1, $ we define $ \beta_{m+1} $ such that
	\begin{equation*}
	2\beta_{m+1}+2^*_s-2=2^*_s\beta_{m}.
	\end{equation*}
	Thus,
	\begin{equation*}
	\beta_{m+1}-1=\left(\frac{2^*_s}{2}\right)^m(\beta_1-1)
	\end{equation*}
	and \eqref{estimate of iteration} becomes
	\begin{equation*}
	\bigg(1+\int_{\Omega}|u|^{2^*_s\beta_{m+1}}\,dx\bigg)^{\frac{1}{2^*_s(\beta_{m+1}-1)}}\leqslant (C\beta_{m+1})^{\frac{1}{2(\beta_{m+1}-1)}}\bigg(1+\int_{\Omega} |u|^{2^*_s\beta_{m}}\, dx\bigg)^{\frac{1}{2^*_s(\beta_{m}-1)}}.
	\end{equation*}
	We now define $ C_{m+1}:=C\beta_{m+1} $ and
	\begin{equation*}
	A_m:=\bigg(1+\int_{\Omega} |u|^{2^*_s\beta_{m}}\, dx\bigg)^{\frac{1}{2^*_s(\beta_{m}-1)}}.
	\end{equation*}
In particular, note that $A_1= \bigg(1+\int_{\Omega} |u|^{2^*_s\beta_{1}}\, dx\bigg)^{\frac{1}{2^*_s(\beta_{1}-1)}}$ is bounded by \eqref{eq:u in L beta1}.
	
	Now we claim that there exists a constant $ C_0>0 $ independent of $m$, such that
	\begin{equation}\label{DRI}
	A_{m+1}\leqslant \prod_{k=2}^{m+1}C_k^{\frac{1}{2(\beta_{k}-1)}}A_1\leqslant C_0 A_1.
	\end{equation}
	We stress that once~\eqref{DRI} is established,
then by the H\"older inequality, we conclude that $ u\in L^p(\Omega)$, for every~$ p\in[1,+\infty)$. Furthermore,
	a limiting argument implies that
	\begin{equation*}
	\|u\|_{\infty}\leqslant C_0 {A_1}<+\infty,
	\end{equation*}
	which would complete the proof of Theorem~\ref{th: regularity}.
	
Hence, it remains to check~\eqref{DRI}. To this end, we write $\bar q=\frac{2}{2^*_s}<1 $, and  observe that
	\begin{equation*}
	\beta_{m+1}=\left(\frac{1}{\bar q}\right)^m(\beta_1-1)+1
	=\left(\frac{1}{\bar q}\right)^{m+1}-\frac{1}{2}\left(\frac{1}{\bar q}\right)^m+1\leqslant2\left(\frac{1}{\bar q}\right)^{m+1},
	\end{equation*}
	thus, $ C_k=C\beta_{k}\leqslant 2C\left(\frac{1}{\bar q}\right)^k $, and
	\begin{align*}
	\prod_{k=2}^{m+1}C_k^{\frac{1}{2(\beta_{k}-1)}}&\leqslant \prod_{k=2}^{m+1} \bigg(2C\left(\frac{1}{\bar q}\right)^k\bigg)^{\frac{\bar{q}^{k-1}}{2(\beta_1-1)}}\\
	&=\bigg[\bigg(2C\left(\frac{1}{\bar q}\right)^{(m+1)}\bigg)^{\bar{q}^m}\bigg(2C\left(\frac{1}{\bar q}\right)^{m}\bigg)^{\bar{q}^{m-1}}\cdots \bigg(2C\left(\frac{1}{\bar q}\right)^{2}\bigg)^{\bar q}\bigg]^{\frac{1}{2(\beta_1-1)}}.
	\end{align*}
	We consider
	\begin{align*}
	\bigg(2C\left(\frac{1}{\bar q}\right)^{(m+1)}\bigg)^{\bar{q}^m}\bigg(2C\left(\frac{1}{\bar q}\right)^{m}\bigg)^{\bar{q}^{m-1}}\cdots \bigg(2C\left(\frac{1}{\bar q}\right)^{2}\bigg)^{\bar q}=(2C)^{\sum\limits_{k=1}^{m} \bar{q}^k}\bigg((\frac{1}{\bar q})^{\bar{q}^m(m+1)+\bar{q}^{m-1}m+\cdots +2 \bar{q}}\bigg),
	\end{align*}
	where
	\begin{equation*}
	\sum\limits_{k=1}^{m}\bar{q}^k=\frac{\bar{q}(1-\bar{q}^m)}{1-\bar q}\leqslant\frac{\bar q}{1-\bar q},
	\end{equation*}
	and
	\begin{equation*}
	0<\bar{q}^m(m+1)+\bar{q}^{m-1}m+\cdots +2\bar{q}=\frac{\bar{q}(1 - \bar{q}^m)}{(1- \bar{q})^2}+\frac{\bar q}{1- \bar{q}}-\frac{\bar{q}^{m+1}(m+1)}{1- \bar{q}}<\frac{\bar q}{(1- \bar{q})^2}+\frac{\bar{q}}{1-\bar{q}},
	\end{equation*}
	which implies that there exists $ C_0>0 $ independent of $ m $ such that
	\begin{equation*}
	\prod_{k=2}^{m+1}C_k^{\frac{1}{2(\beta_{k}-1)}}\leqslant C_0,
	\end{equation*}	
where $ C_0 $ depends on $ c,n,s,\Omega$.	The proof is completed.
\end{proof}

\bigskip

\section{$ C^{1,\alpha} $-regularity }\setcounter{equation}{0}

In order to obtain the $ C^{1,\alpha} $-regularity of weak solutions of \eqref{eq:g main equation}, it suffices to prove a general $ W^{2,p} $-regularity result  for the mixed operator $ \mathcal{L} $ in Theorem~\ref{theorem w 2p}.

For any $ u\in W^{2,p}(\Omega) $ and $ f\in L^p(\Omega) $, we always think of  them as the functions defined on the whole space $\mathbb{R}^n$ such that they vanish outside $ \Omega$.

\subsection{$ W^{2,p} $-regularity}

We begin by recalling the following classical results about the $W^{2,p}$-regularity of the operator $ -\Delta$.

Let $ p $ be given. Then, there exists $ \lambda_0\geqslant 0 $ such that the problem
\begin{equation}\label{eq: classical equation lambda}
-\Delta u_\lambda+\lambda u_\lambda=f
\end{equation}
has a unique solution $ u_\lambda\in W^{2,p}(\Omega)\cap W^{1,p}_0(\Omega) $ for any $ \lambda\geqslant\lambda_0, f\in L^p(\Omega) $ and
\begin{align}
	\|u_\lambda\|_{W^{2,p}(\Omega)}\leqslant C\|f\|_{p} \label{eq:lapalce estimate 1}\\
{	(\lambda-\lambda_0)\|u_\lambda\|_{p}\leqslant C\|f\|_{p}}\label{eq:lapalce estimate 2}
\end{align}
where the positive constant  $ C$ is independent of  $ u_\lambda$ and $ \lambda$.

According to the above results, the proof of  Theorem~\ref{theorem w 2p} will be divided into the following subsections:
Subsection~\ref{subsection 4.11} is devoted to the basic $ L^p $ estimates of the operator $ (-\Delta)^s$;  in Subsection~\ref{section 4.12},
we apply the fixed point theorem  to obtain that the problem
	\begin{equation*}
	-\Delta u+\lambda u=f-(-\Delta)^su, \qquad \text{in } \Omega
	\end{equation*}
	has a unique solution $ u\in W^{2,p}(\Omega)\cap W^{1,p}_0(\Omega) $ for $ \lambda>0$ large enough; in Subsection~\ref{subsection 4.13}, thanks to the Maximum Principle  in \cite[Theorem 1.2]{MR4387204} and the
	bootstrap method, one can obtain the result as desired.

\subsubsection{$ L^p $ estimate of $ (-\Delta)^s $}\label{subsection 4.11}

We introduce the following Extension Theorem which is useful for the $ L^p $ estimate of $ (-\Delta)^s$.
\begin{Theorem}\label{theorem extension}[See e.g. \cite[Chapter 7]{GilbargTrudingerbook}]
	Let $ \Omega $ be a $ C^{k-1,1} $ domain in $ \mathbb{R}^n $, $ k\geqslant 1$. Then
	\begin{itemize}
		\item[(i)] $ C^\infty(\overline{\Omega}) $ is dense in $ W^{k,p}(\Omega), 1\leqslant p<\infty$.
		\item[(ii)] For any open set $ \Omega'\supset\supset \Omega$, there exists a bounded linear extension operator $ E $ from $  W^{k,p}(\Omega) $ into $  W^{k,p}_0(\Omega') $ such that $ Eu=u $ in $ \Omega $ and
		\begin{equation*}
		\|Eu\|_{ W^{k,p}(\Omega')}\leqslant C\|u\|_{ W^{k,p}(\Omega)} \qquad \text{for all $ u\in W^{k,p}(\Omega) $},
		\end{equation*}
		where $ C=C(k,\Omega,\Omega')$.
	\end{itemize}
\end{Theorem}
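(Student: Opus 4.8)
\textbf{Plan of proof for the Extension Theorem~\ref{theorem extension}.} This is a classical result, so the strategy is the standard localization-and-flattening argument; I will indicate the main steps rather than grinding through the charts. First I would reduce everything to a local statement near a boundary point by a partition of unity: cover $\overline\Omega$ by finitely many open sets $U_0,U_1,\dots,U_N$, where $\overline{U_0}\subset\Omega$ and each $U_j$ ($j\ge 1$) is a coordinate patch in which $\partial\Omega$ is the graph of a $C^{k-1,1}$ function, and take a subordinate smooth partition of unity $\{\zeta_j\}$. On $U_0$ there is nothing to do (the interior piece extends by multiplication with a cutoff), so the whole problem concentrates on the boundary patches.

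\textbf{Step 1: straightening the boundary.} In each $U_j$, after a rigid motion, write $x=(x',x_n)$ and assume $\Omega\cap U_j=\{x_n>\gamma(x')\}\cap U_j$ with $\gamma\in C^{k-1,1}$. The change of variables $y'=x'$, $y_n=x_n-\gamma(x')$ is a bi-Lipschitz $C^{k-1,1}$ diffeomorphism that flattens the boundary to $\{y_n=0\}$ and, crucially, preserves the Sobolev class $W^{k,p}$ (the Jacobian and its derivatives up to order $k-1$ are bounded because $\gamma\in C^{k-1,1}$, and a $C^{k-1,1}$ change of variables maps $W^{k,p}$ boundedly to $W^{k,p}$). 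So it suffices to extend a function defined on a half-ball $B^+=B_1\cap\{y_n>0\}$ to the full ball.

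\textbf{Step 2: the half-space reflection.} On the flattened piece I would use the higher-order reflection (Lichtenstein/Hestenes extension): for $u\in W^{k,p}(B^+)$ set, for $y_n<0$,
\begin{equation*}
(Eu)(y',y_n)=\sum_{i=1}^{k+1} c_i\, u\!\left(y', -\tfrac{y_n}{i}\right),
\end{equation*}
where the coefficients $c_i$ are chosen so that $\sum_i c_i (-1/i)^m = 1$ for $m=0,1,\dots,k$ (a Vandermonde system, hence solvable). This matching of the first $k$ one-sided derivatives across $\{y_n=0\}$ guarantees that $Eu\in W^{k,p}(B_1)$ with $\|Eu\|_{W^{k,p}(B_1)}\le C\|u\|_{W^{k,p}(B^+)}$, the constant depending only on $k$ and $p$; density of $C^\infty(\overline{B^+})$ (item (i), which follows for the half-ball from a simple translation-and-mollification argument, $u_\varepsilon(y)=(u*\rho_\varepsilon)(y',y_n+\varepsilon)$) lets one verify the derivative-matching on smooth functions and then pass to the limit. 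Pulling back through the diffeomorphism of Step~1, multiplying by $\zeta_j$, and summing over $j$ produces a bounded linear operator into $W^{k,p}(\mathbb{R}^n)$ supported in $\bigcup_j U_j$. Finally, to land in $W^{k,p}_0(\Omega')$ for a prescribed $\Omega'\supset\supset\Omega$, I multiply by a fixed cutoff $\eta\in C_c^\infty(\Omega')$ with $\eta\equiv 1$ on a neighborhood of $\overline\Omega$; since $Eu=u$ on $\Omega$ this does not disturb the values in $\Omega$, and it forces compact support in $\Omega'$, giving $Eu\in W^{k,p}_0(\Omega')$ with the asserted estimate, $C=C(k,\Omega,\Omega')$.

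\textbf{Main obstacle.} The only genuinely delicate point is verifying that the $C^{k-1,1}$ (not $C^k$!) boundary is exactly the regularity needed for the flattening map to preserve $W^{k,p}$: one needs that composition with a bi-Lipschitz map whose derivatives up to order $k-1$ are Lipschitz maps $W^{k,p}$ to itself, which is proved by the chain rule together with the fact that $W^{k,p}$ is stable under multiplication by $C^{k-1,1}=W^{k,\infty}$ functions. Everything else — the partition of unity, the Vandermonde reflection, the density statement, and the final cutoff — is routine bookkeeping; since the paper only invokes this theorem as a black box (citing \cite[Chapter 7]{GilbargTrudingerbook}), it is legitimate to present the argument at this level of detail.
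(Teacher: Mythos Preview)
Your sketch is a correct outline of the classical proof of the extension theorem (partition of unity, boundary flattening, Hestenes--Lichtenstein higher-order reflection, and a final cutoff), and you have correctly identified the one nontrivial point, namely that $C^{k-1,1}$ regularity of the boundary is exactly what is needed for the flattening diffeomorphism to act boundedly on $W^{k,p}$.

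There is nothing to compare, however: the paper does not prove this theorem at all. It is stated with the attribution ``See e.g.\ \cite[Chapter~7]{GilbargTrudingerbook}'' and used as a black box in the subsequent $L^p$ estimates for $(-\Delta)^s$. Your own last sentence already recognizes this. So your proposal is not an alternative to the paper's argument --- it is simply a (correct) sketch of the standard proof that the paper chose to omit by citation.
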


Now we state some useful interpolation results.
 \begin{Lemma}\label{lemma Lp estimate about fractional operator}
 Let $ \Omega $ be a $ C^{1,1} $ domain in $ \mathbb{R}^n $ and $ s\in(0,\frac{1}{2}) $. Then the operator $ (-\Delta)^s\in \mathscr{L}(W^{2,p}(\Omega)\cap W_0^{1,p}(\Omega),L^p(\Omega)) $ with  $ p>1 $, and we have
 \begin{equation}\label{eq: Lp estimate of fractional s}
 	\|(-\Delta)^s u\|_{p}\leqslant C\left[\epsilon\|u\|_{W^{2,p}(\Omega)}+\tau(\epsilon)\|u\|_{p}\right] ,  \qquad\text{ for every $\epsilon>0$},
 \end{equation}where $ C $ is a constant independent of $ u$.
 \end{Lemma}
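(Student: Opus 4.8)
The plan is to dominate $(-\Delta)^s u$ pointwise by a Riesz potential of $\nabla u$---which is possible exactly because $s<\tfrac12$---and then to conclude via Young's convolution inequality together with the classical interpolation inequality for intermediate derivatives. To set up, given $u\in W^{2,p}(\Omega)\cap W^{1,p}_0(\Omega)$ we regard it, following the convention fixed above, as the function $\bar u$ on $\mathbb R^n$ obtained by extending $u$ by zero. Since $u$ has zero trace on $\partial\Omega$, we have $\bar u\in W^{1,p}(\mathbb R^n)$ and its weak gradient carries no singular part on $\partial\Omega$: namely $\nabla\bar u=\chi_\Omega\,\nabla u$ a.e.\ in $\mathbb R^n$. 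In particular $\nabla\bar u$ is compactly supported and $\|\nabla\bar u\|_{L^p(\mathbb R^n)}=\|\nabla u\|_{L^p(\Omega)}$, and by definition $(-\Delta)^su$ is obtained by applying the principal value integral to $\bar u$.

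First I would establish a pointwise estimate. For a.e.\ pair $(x,y)$ one may write $\bar u(x)-\bar u(y)=\int_0^1\nabla\bar u\big(x+t(y-x)\big)\cdot(y-x)\,dt$ (immediate for smooth functions, and in general by approximating $\bar u$ in $W^{1,p}(\mathbb R^n)$ by smooth functions and passing to the limit via Fubini). Hence, by Tonelli's theorem and the change of variables $z=x+t(y-x)$,
\begin{equation*}
\int_{\mathbb R^n}\frac{|\bar u(x)-\bar u(y)|}{|x-y|^{n+2s}}\,dy\le\int_{\mathbb R^n}\!\!\int_0^1\frac{|\nabla\bar u(x+t(y-x))|}{|x-y|^{n+2s-1}}\,dt\,dy=\frac1{2s}\int_{\mathbb R^n}\frac{|\nabla\bar u(z)|}{|x-z|^{n+2s-1}}\,dz.
\end{equation*}
Since $s<\tfrac12$ we have $n+2s-1<n$, so the right-hand side is finite for a.e.\ $x$; in particular the principal value defining $(-\Delta)^su(x)$ is actually an absolutely convergent integral and
\begin{equation*}
|(-\Delta)^su(x)|\le\frac{c_{n,s}}{2s}\,\big(I_{1-2s}|\nabla\bar u|\big)(x)\qquad\text{for a.e. }x,
\end{equation*}
where $I_{1-2s}$ denotes the Riesz potential of order $1-2s\in(0,1)$.

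Next I would deduce the $L^p$ bound. Since $\nabla\bar u$ is supported in $\overline\Omega$, for $x\in\Omega$ the Riesz kernel may be replaced by its truncation to $\{|x-z|\le D\}$, where $D:=\mathrm{diam}(\Omega)$. The truncated kernel $K:=|\cdot|^{-(n+2s-1)}\chi_{\{|\cdot|\le D\}}$ belongs to $L^1(\mathbb R^n)$---and it is precisely here that the restriction $s<\tfrac12$ is used, since $\int_{\{|z|\le D\}}|z|^{-(n+2s-1)}\,dz=c\int_0^D r^{-2s}\,dr<\infty$---so Young's convolution inequality gives
\begin{equation*}
\|(-\Delta)^su\|_{L^p(\Omega)}\le\frac{c_{n,s}}{2s}\,\|K\|_{L^1(\mathbb R^n)}\,\|\nabla\bar u\|_{L^p(\mathbb R^n)}=C(n,s,\Omega)\,\|\nabla u\|_{L^p(\Omega)}.
\end{equation*}
Together with linearity, this already proves that $(-\Delta)^s\in\mathscr L\big(W^{2,p}(\Omega)\cap W^{1,p}_0(\Omega),L^p(\Omega)\big)$. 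Combining the last display with the classical interpolation inequality for the first derivatives, $\|\nabla u\|_{L^p(\Omega)}\le\epsilon\|u\|_{W^{2,p}(\Omega)}+\tau(\epsilon)\|u\|_{L^p(\Omega)}$ for every $\epsilon>0$---which holds on the bounded $C^{1,1}$ set $\Omega$, e.g.\ by extending $u$ to $\mathbb R^n$ via Theorem~\ref{theorem extension} and invoking the standard Euclidean interpolation inequality---then yields \eqref{eq: Lp estimate of fractional s}.

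The technical heart is the passage above: making the segment representation of $\bar u(x)-\bar u(y)$ rigorous for a merely Sobolev (non-smooth) function, and then recognizing that the range $s<\tfrac12$ is exactly what makes the resulting Riesz kernel locally integrable (equivalently, truncatable to an $L^1$ kernel). The remaining ingredients---that the zero extension of a $W^{1,p}_0$ function has no singular gradient, Young's inequality, and the interpolation inequality on $\Omega$---are standard. It is worth noting that the argument genuinely breaks down at $s=\tfrac12$, where $\int_0^D r^{-1}\,dr=+\infty$, consistently with the hypothesis of the lemma.
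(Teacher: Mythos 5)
Your proof is correct, and it takes a genuinely different route from the paper's. The paper splits $(-\Delta)^su(x)$ into three pieces (near-interaction with $y\in\Omega$, near-interaction with $y\notin\Omega$, and the far field $|x-y|>1$), handles the first and third via the extension operator of Theorem~\ref{theorem extension} plus H\"older, and handles the exterior near-field term through a Hardy-type estimate on $u(x)/d(x)$ with $d(x)=\mathrm{dist}(x,\partial\Omega)$, before concluding with the same Sobolev interpolation inequality you invoke. You instead exploit the zero boundary condition once and for all at the level of the extension: since $u\in W^{1,p}_0(\Omega)$, the zero extension $\bar u$ lies in $W^{1,p}(\mathbb R^n)$ with $\nabla\bar u=\chi_\Omega\nabla u$, and the a.e.\ segment representation plus the change of variables gives the pointwise domination $|(-\Delta)^su(x)|\leqslant C\,(K\ast|\nabla\bar u|)(x)$ on $\Omega$ with the truncated kernel $K=|\cdot|^{-(n+2s-1)}\chi_{\{|\cdot|\leqslant \mathrm{diam}\,\Omega\}}\in L^1$, so Young's inequality yields $\|(-\Delta)^su\|_{p}\leqslant C\|\nabla u\|_{p}$ in one stroke. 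This is cleaner: it avoids the case analysis and the distance-function (Hardy-type) estimate, and it isolates exactly where the two hypotheses enter --- the zero trace is what kills the boundary singularity that the paper's term $\uppercase\expandafter{\romannumeral 2}$ must control by hand, and $s<\tfrac12$ is what makes $K$ integrable (your remark that the argument genuinely fails at $s=\tfrac12$ matches the paper's need for a separate second-difference argument in Lemma~\ref{lemma Lp estimate about 1/2 operator}). The paper's region-by-region scheme, on the other hand, is the template that generalizes to $s=\tfrac12$ and $s>\tfrac12$ (Lemmata~\ref{lemma Lp estimate about 1/2 operator} and~\ref{Lp estimate for some p}), where a first-order bound by $\|\nabla u\|_p$ alone is no longer available; your convolution argument is specific to the subcritical range but sharper and shorter there.
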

\begin{proof}
Let $u\in W_0^{1,p}(\Omega) $. We split the term $(-\Delta)^su(x)$ into three parts and estimate their $L^p$-norms one by one.
\begin{align*}&
	(-\Delta)^su(x)\\ =\,&\int_{\mathbb{R}^n}\frac{u(x)-u(y)}{|x-y|^{n+2s}}\, dy\\
	=\,&\int_{\{y\in\Omega\}\cap\{|x-y|\leqslant 1\}}\frac{u(x)-u(y)}{|x-y|^{n+2s}}\, dy+\int_{\{y\notin\Omega\}\cap\{|x-y|\leqslant 1\}}\frac{u(x)}{|x-y|^{n+2s}}\, dy+\int_{\{|x-y|> 1\}}\frac{u(x)-u(y)}{|x-y|^{n+2s}}\, dz\\
	=\,&\uppercase\expandafter{\romannumeral 1}+\uppercase\expandafter{\romannumeral 2}+\uppercase\expandafter{\romannumeral 3}.
\end{align*}
Now, using the change of variable $ z=y-x  $, and applying the Extension Theorem~\ref{theorem extension}  and the H\"{o}lder inequality, we see that
\begin{align*}
	\|\uppercase\expandafter{\romannumeral 1}\|_{p}&\leqslant\bigg(\int_{\Omega}\, dx\bigg[\int_{\{x+z\in\Omega\}\cap\{|z|\leqslant 1\}}\frac{|u(x)-u(x+z)|}{|z|^{n+2s}}\, dz\bigg]^p\bigg)^{1/p}\\
	&=\bigg(\int_{\Omega}\, dx \bigg[\int_{\{x+z\in\Omega\}\cap\{|z|\leqslant 1\}} \frac{\int_{0}^{1}|\nabla u(x+\theta z)|\,d\theta}{|z|^{n+2s-1}}\, dz\bigg]^p\bigg)^{1/p}\\
	&\leqslant \bigg(\int_{\mathbb{R}^n}\, dx \bigg[\int_{\{|z|\leqslant 1\}}\int_{0}^{1}  \frac{|\nabla Eu(x+\theta z)|^p}{|z|^{n+2s-1}}\,d\theta\, dz\bigg]\bigg(\int_{\{|z|\leqslant 1\}}\int_{0}^{1}\frac{1}{|z|^{n+2s-1}}\,d\theta\, dz\bigg)^{\frac{p}{q}}\bigg)^{1/p}\\
	&\leqslant \|Eu\|_{W^{1,p}(\mathbb{R}^n)}\bigg(\int_{\{|z|\leqslant 1\}}\frac{1}{|z|^{n+2s-1}}\, dz\bigg)
	\leqslant C(n,s,p) \|u\|_{W^{1,p}(\Omega)}.
\end{align*}
Also, we estimate
\begin{align*}
	\|\uppercase\expandafter{\romannumeral 3}\|_{p}&\leqslant\bigg(\int_{\mathbb{R}^n}\, dx\bigg[\int_{\{|z|> 1\}}\frac{|u(x)|+|u(x+z)|}{|z|^{n+2s}}\, dz\bigg]^p\bigg)^{1/p}\\
	&\leqslant C(p)\bigg(\int_{\mathbb{R}^n}|Eu(x)|^p+|Eu(x+z)|^p\, dx\bigg)^{1/p}\bigg(\int_{\{|z|> 1\}}\frac{1}{|z|^{n+2s}}\, dz\bigg)\\
	&\leqslant C(p,s)\|Eu\|_{L^p(\mathbb{R}^n)}
	\leqslant C(p,s,\Omega)\|u\|_{W^{1,p}(\Omega)}.
\end{align*}
Set $ d(x):= $  dist$(x,\partial\Omega)$ for any $ x\in\Omega $. Then note that for  $ u\in  W_0^{1,p}(\Omega) $, we have
\begin{align*}
\|\uppercase\expandafter{\romannumeral 2}\|_{p}&\leqslant\bigg(\int_{\Omega}\, dx\bigg[\int_{\{x+z\notin\Omega\}\cap\{|z|\leqslant 1\}}\frac{|u(x)|}{|z|^{n+2s-1}d(x)}\, dz\bigg]^p\bigg)^{1/p}
\leqslant C(s,n,p)\bigg(\int_{\Omega} \left|\frac{u(x)}{d(x)}\right|^p\, dx\bigg)^{1/p}\\
&\leqslant C(n,s,p)\bigg(\int_{\Omega}\int_{0}^{1}|\nabla u(x+\theta(x-x_0))|^p\,dt\,dx\bigg)^{1/p}
\leqslant C(n,s,p,\Omega)\|u\|_{W^{1,p}(\Omega)},
\end{align*}
where $ x_0\in\partial\Omega $ such that $ d(x)= $dist$(x,x_0)$. Here, the constant $ C$ changes from line to line, but remains independent of $ u$.

As a consequence, the Sobolev interpolation inequality implies that for  every $\epsilon>0$, we have
\begin{equation*}
	\|(-\Delta)^s u\|_{p}\leqslant C\bigg[\epsilon\|u\|_{W^{2,p}(\Omega)}+\tau(\epsilon)\|u\|_{p}\bigg]. \qedhere
\end{equation*}
\end{proof}

Next, we extend the preceding result to the case $ s=\frac{1}{2} $.

\begin{Lemma}\label{lemma Lp estimate about 1/2 operator}
	 Let $ \Omega $ be a $ C^{1,1} $ domain in $ \mathbb{R}^n $ and $ s=\frac{1}{2} $. Then the operator $ (-\Delta)^{\frac{1}{2}}\in \mathscr{L}(W^{2,p}(\Omega)\cap W_0^{1,p}(\Omega),L^p(\Omega)) $ with  $ p>1 $, and
	\begin{equation}\label{eq: Lp estimate of 1/2}
{	\|(-\Delta)^{\frac{1}{2}} u\|_{p}\leqslant C\bigg[{o(\epsilon)}\|u\|_{W^{2,p}(\Omega)}+\tau(\epsilon)\|u\|_{W^{1,p}(\Omega)}\bigg], \qquad \text{for every $\epsilon>0$}}
	\end{equation}
	where $ C$ is a constant independent of $u$,
	{ $ o(\epsilon)\rightarrow 0 $ as $ \epsilon\rightarrow 0$}, and $ \tau(\epsilon)$ is unbounded as $\epsilon\rightarrow 0$.
\end{Lemma}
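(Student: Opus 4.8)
The plan is to mimic the near/far decomposition used in the proof of Lemma~\ref{lemma Lp estimate about fractional operator}, but to compensate for the fact that at the critical order $s=\frac12$ the naive pointwise estimates diverge logarithmically. Fix a small parameter $\delta\in(0,1)$, which will play the role of $\epsilon$ in \eqref{eq: Lp estimate of 1/2}, and regard $u$ as extended by zero outside $\Omega$, so that $u\in W^{1,p}(\mathbb{R}^n)$ with $\nabla u\in L^p(\mathbb{R}^n)$. For $x\in\Omega$ one writes
\[
(-\Delta)^{1/2}u(x)=c_{n,1/2}\bigl(P(x)+Q(x)\bigr),\qquad
P(x):=\mathrm{PV}\!\!\int_{|z|\le\delta}\!\!\frac{u(x)-u(x+z)}{|z|^{n+1}}\,dz,\quad
Q(x):=\int_{|z|>\delta}\!\!\frac{u(x)-u(x+z)}{|z|^{n+1}}\,dz,
\]
and $\|(-\Delta)^{1/2}u\|_{L^p(\Omega)}\le|c_{n,1/2}|\bigl(\|P\|_{L^p(\mathbb{R}^n)}+\|Q\|_{L^p(\mathbb{R}^n)}\bigr)$. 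The term $Q$ is harmless: since $\int_{|z|>\delta}|z|^{-n-1}\,dz=C\delta^{-1}$, Young's (or Minkowski's) inequality gives $\|Q\|_{L^p(\mathbb{R}^n)}\le C\delta^{-1}\|u\|_{L^p(\Omega)}\le C\delta^{-1}\|u\|_{W^{1,p}(\Omega)}$, which will supply the $\tau(\epsilon)\|u\|_{W^{1,p}(\Omega)}$ term with $\tau(\epsilon)\sim\epsilon^{-1}$.

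The heart of the matter is the estimate on $P$. Since the ball $\{|z|\le\delta\}$ is symmetric we may symmetrize the integrand, $P(x)=\tfrac12\int_{|z|\le\delta}|z|^{-n-1}\bigl(2u(x)-u(x+z)-u(x-z)\bigr)\,dz$; by the fundamental theorem of calculus along segments, $|2u(x)-u(x+z)-u(x-z)|\le|z|\int_0^1|\nabla u(x+tz)-\nabla u(x-tz)|\,dt$, and after a triangle inequality, the change $z\mapsto-z$, and the scaling substitution $w=tz$ (under which $dz/|z|^{n}$ is invariant) one arrives at the pointwise bound
\[
|P(x)|\;\le\;C\int_{|w|\le\delta}\frac{|\nabla u(x+w)-\nabla u(x)|}{|w|^{n}}\,dw .
\]
Now fix any $\rho\in(0,1/p)$ and apply Hölder's inequality in $w$ with exponents $p,p'$, splitting the kernel as $|w|^{-n}=|w|^{-(n+\rho p)/p}\cdot|w|^{-(n/p'-\rho)}$; since $\int_{|w|\le\delta}|w|^{-(n/p'-\rho)p'}\,dw=C\delta^{\rho p'}$, this yields $|P(x)|\le C\delta^{\rho}\bigl(\int_{|w|\le\delta}|\nabla u(x+w)-\nabla u(x)|^p|w|^{-n-\rho p}\,dw\bigr)^{1/p}$, and integrating in $x$ gives $\|P\|_{L^p(\mathbb{R}^n)}\le C\delta^{\rho}\,[\nabla u]_{W^{\rho,p}(\mathbb{R}^n)}$, where $[\cdot]_{W^{\rho,p}(\mathbb{R}^n)}$ denotes the Gagliardo seminorm of the zero-extension of $\nabla u|_\Omega$. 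Finally, since $\nabla u\in W^{1,p}(\Omega)$ and $\rho p<1$, this seminorm is finite and controlled by $\|u\|_{W^{2,p}(\Omega)}$: this is the standard fact that a $W^{1,p}$ function on a bounded $C^{1,1}$ domain extends by zero to $W^{\rho,p}(\mathbb{R}^n)$ for every $\rho p<1$ — splitting the double integral into the piece ``both points in $\Omega$'' (bounded by $\|\nabla u\|_{W^{\rho,p}(\Omega)}\le\|\nabla u\|_{W^{1,p}(\Omega)}$) and the piece ``one point in $\Omega$, one out'', which after integrating the outer variable reduces to the weighted bound $\int_\Omega|\nabla u|^p\,d(x)^{-\rho p}\,dx\le C\|\nabla u\|_{W^{1,p}(\Omega)}^p$, i.e.\ the fractional Hardy inequality (valid because $\rho p<1$). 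Collecting the estimates, $\|(-\Delta)^{1/2}u\|_{L^p(\Omega)}\le C\delta^{\rho}\|u\|_{W^{2,p}(\Omega)}+C\delta^{-1}\|u\|_{W^{1,p}(\Omega)}$, so \eqref{eq: Lp estimate of 1/2} holds with $o(\epsilon):=C\epsilon^{\rho}$ (which tends to $0$) and $\tau(\epsilon):=C\epsilon^{-1}$ (which is unbounded).

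The main obstacle — and the structural reason why the lower-order term must be measured in $W^{1,p}(\Omega)$ rather than merely in $L^p(\Omega)$ as in Lemma~\ref{lemma Lp estimate about fractional operator} — is precisely that the zero-extension of $u$ is only $W^{1,p}$-regular across $\partial\Omega$: the borderline operator $(-\Delta)^{1/2}$ cannot be dominated by a second-order Taylor expansion (which would produce the non-integrable kernel $|z|^{-n}$ on $\{|z|\le\delta\}$), and one must instead cash in the fractional smoothness $\nabla u\in W^{\rho,p}(\mathbb{R}^n)$ with $\rho<1/p$ that survives zero-extension, which is what generates both the gain $\delta^{\rho}$ and the appearance of $\|\nabla u\|_{L^p(\Omega)}$. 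Some care is needed to justify the symmetrization and the scaling substitution for merely $W^{1,p}$ data; the cleanest route is to establish the displayed estimates first for $u\in C^\infty(\overline\Omega)$ with $u|_{\partial\Omega}=0$ and then pass to the limit, using the density of such functions in $W^{2,p}(\Omega)\cap W^{1,p}_0(\Omega)$ together with the $L^p$-bounds just obtained.
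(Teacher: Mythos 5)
Your argument is correct, but it follows a genuinely different route from the paper's proof. The paper writes $-(-\Delta)^{1/2}u$ as a second-difference integral and splits it into five pieces according to whether $x\pm z$ lies in $\Omega$ and whether $|z|\lessgtr 1$, estimating each piece with the extension operator, boundary-distance weights $d(x)^{-\alpha}$ with $\alpha<1/p$, and Sobolev embeddings, which forces a separate case analysis for $p>n$, $p=n$ and $p<n$. You instead make a single near/far split at scale $\epsilon$: the far part is a convolution with an $L^1$ kernel and costs only $C\epsilon^{-1}\|u\|_{L^p(\Omega)}$, while in the near part you stop the Taylor expansion at first order and convert the second difference of $u$ into a first difference of $\nabla u$, so that the divergent kernel $|w|^{-n}$ is absorbed by the Gagliardo seminorm $[\nabla u]_{W^{\rho,p}(\mathbb{R}^n)}$ of the zero-extended gradient, $0<\rho<1/p$, which is finite and controlled by $\|u\|_{W^{2,p}(\Omega)}$ precisely because $\rho p<1$ (extension by zero is bounded on $W^{\rho,p}$, equivalently the fractional Hardy inequality without boundary vanishing). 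This buys uniformity in $p$ (no case distinction), explicit rates $o(\epsilon)=C\epsilon^{\rho}$, $\tau(\epsilon)=C\epsilon^{-1}$, and in fact a slightly stronger conclusion than \eqref{eq: Lp estimate of 1/2}, since your lower-order term is $\|u\|_{L^p(\Omega)}$ rather than $\|u\|_{W^{1,p}(\Omega)}$, which would feed into the contraction argument of Lemma~\ref{lemma existence of lambda 1/2  } without the extra interpolation step; the price is that you import two nontrivial (though standard) external facts — the fractional Hardy/zero-extension property for $\rho p<1$ (note that $\nabla u\notin W^{1,p}_0(\Omega)$ in general, so you do need the $sp<1$ version valid without boundary conditions, e.g.\ Dyda or Grisvard, and you should also note $[\nabla u]_{W^{\rho,p}(\Omega)}\leqslant C\|\nabla u\|_{W^{1,p}(\Omega)}$), and a density/limiting argument to justify the symmetrization of the principal value, whereas the paper's proof, though longer and case-ridden, uses only the fundamental theorem of calculus, H\"older, and the extension theorem it already stated. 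Both proofs reach the stated estimate; yours is cleaner and quantitatively sharper, provided you make the two cited facts precise.
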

\begin{proof}
	\par\noindent \textbf{Case 1.} $ p>n $. Then $ u\in C^{1}(\overline{\Omega}) $.
	
	Let $ u\in W^{2,p}(\Omega)\cap W_0^{1,p}(\Omega)  $. We split the term $-(-\Delta)^{\frac{1}{2}}u $  into five parts and estimate their $L^p$-norms one by one.
	\begin{align*}
	-(-\Delta)^{\frac{1}{2}}u(x)&=\frac{1}{2}\int_{\mathbb{R}^n}\frac{u(x+z)+u(x-z)-2u(x)}{|z|^{n+1}}\, dz\\
	&=\frac{1}{2}\int_{\Omega_{\uppercase\expandafter{\romannumeral 1}}} \frac{u(x+z)+u(x-z)-2u(x)}{|z|^{n+1}}\, dz\\
	&\quad +\frac{1}{2}\int_{\Omega_{\uppercase\expandafter{\romannumeral 2}}} \left[ \frac{u(x+z)-u(x)}{|z|^{n+1}}+\frac{-u(x)}{|z|^{n+1}}\right]\, dz\\
	&\quad +\frac{1}{2}\int_{\Omega_{\uppercase\expandafter{\romannumeral 3}}}\left[\frac{u(x-z)-u(x)}{|z|^{n+1}}+\frac{-u(x)}{|z|^{n+1}}\right]\, dz\\
	&\quad +\int_{\Omega_{\uppercase\expandafter{\romannumeral 4}}}\frac{-u(x)}{|z|^{n+1}}\, dz+ \int_{\{|z|> 1\}}\frac{u(x+z)+u(x-z)-2u(x)}{|z|^{n+1}}\, dz\\
	&=\uppercase\expandafter{\romannumeral 1}+\uppercase\expandafter{\romannumeral 2}+\uppercase\expandafter{\romannumeral 3}+\uppercase\expandafter{\romannumeral 4}+\uppercase\expandafter{\romannumeral 5},
	\end{align*}
	where \begin{equation*}
		\begin{split}
		\Omega_{\uppercase\expandafter{\romannumeral 1}}&:={\{x+ z\in\Omega\}\cap\{x- z\in\Omega\}\cap\{|z|\leqslant 1\}},\\
		\Omega_{\uppercase\expandafter{\romannumeral 2}}&:={\{x+ z\in\Omega\}\cap\{x- z\notin\Omega\}\cap\{|z|\leqslant 1\}},\\
		\Omega_{\uppercase\expandafter{\romannumeral 3}}&:=
		{\{x+ z\notin\Omega\}\cap\{x- z\in\Omega\}
	\cap\{|z|\leqslant 1\}},\\
\Omega_{\uppercase\expandafter{\romannumeral 4}}&:={\{x+ z\notin\Omega\}\cap\{x- z\notin\Omega\}
\cap\{|z|\leqslant 1\}}.
		\end{split}
	\end{equation*}

	\textbf{(i). Estimate of the term $ \uppercase\expandafter{\romannumeral 1}$ above.}		
	Given~$\epsilon\in(0,1) $, set $ \uppercase\expandafter{\romannumeral 1}=\uppercase\expandafter{\romannumeral 1}_{|z|\leqslant\epsilon}+\uppercase\expandafter{\romannumeral 1}_{\epsilon<|z|\leqslant1} $, and then we have
	\begin{align*}
	\|\uppercase\expandafter{\romannumeral 1}_{|z|\leqslant\epsilon}\|_{p}&=\frac{1}{2}\bigg(\int_{\Omega}\, dx\bigg[\int_{\Omega_{\uppercase\expandafter{\romannumeral 1}}\cap\{|z|\leqslant\epsilon\}}\frac{\int_{0}^{1}\,d\theta\int_{0}^{\theta}\, d\theta'D_{ij}u(x+\theta'z)z_iz_j}{|z|^{n+1}}\, dz\bigg]^p\bigg)^{1/p}\\
	&\leqslant \bigg(\int_{\mathbb{R}^n}\, dx \bigg[\int_{\{|z|\leqslant \epsilon\}}\int_{0}^{1} \int_{0}^{\theta} \frac{|D^2 (Eu(x+\theta' z))|^p}{|z|^{n-1}}\, d\theta'\,d\theta\, dz\bigg]\bigg(\int_{\{|z|\leqslant \epsilon\}}\frac{1}{|z|^{n-1}}\, dz\bigg)^{\frac{p}{q}}\bigg)^{1/p}\\
	&\leqslant \|Eu\|_{W^{2,p}(\mathbb{R}^n)}\bigg(\int_{\{|z|\leqslant \epsilon\}}\frac{1}{|z|^{n-1}}\, dz\bigg)
	\leqslant C(n,p) o(\epsilon) \|u\|_{W^{2,p}(\Omega)},
	\end{align*}
	\begin{align*}
	\|\uppercase\expandafter{\romannumeral 1}_{\epsilon<|z|\leqslant1}\|_{p}&=\bigg(\int_{\Omega}\, dx\bigg[\int_{\Omega_{\uppercase\expandafter{\romannumeral 1}}\cap\{\epsilon<|z|\leqslant 1\}}\frac{u(x+z)-u(x)}{|z|^{n+1}}\, dz\bigg]^p\bigg)^{1/p}\\
	&\leqslant \bigg(\int_{\mathbb{R}^n}\, dx \bigg[\int_{\epsilon<|z|\leqslant1}\int_{0}^{1}  \frac{|\nabla Eu(x+\theta z)|^p}{|z|^{n}}\,d\theta\, dz\bigg]\bigg(\int_{\epsilon<|z|\leqslant1}\frac{1}{|z|^{n}}\, dz\bigg)^{\frac{p}{q}}\bigg)^{1/p}\\
	&\leqslant \|Eu\|_{W^{1,p}(\mathbb{R}^n)}\bigg(\int_{\epsilon<|z|\leqslant1}\frac{1}{|z|^{n}}\, dz\bigg)
	\leqslant C(n,p) \tau(\epsilon) \|u\|_{W^{1,p}(\Omega)},
	\end{align*}
	where $ o(\epsilon)\rightarrow 0$ as $ \epsilon\rightarrow 0$, and $ \tau(\epsilon)$ is arbitrary and unbounded, as $ \epsilon\rightarrow 0$. Gathering the above estimates, we obtain that
	\begin{equation*}
	\|\uppercase\expandafter{\romannumeral 1}\|_{p}\leqslant C(n,p)\bigg[o(\epsilon) \|u\|_{W^{2,p}(\Omega)}+\tau(\epsilon) \|u\|_{W^{1,p}(\Omega)}\bigg].
	\end{equation*}
	\smallskip
	
	\textbf{(ii). Estimates of the terms  $ \uppercase\expandafter{\romannumeral 2 }\sim \uppercase\expandafter{\romannumeral 4}$.}	
	Set $ d(x):= $  dist$(x,\partial\Omega)$ for any $ x\in\Omega $. Noting that $ u\in C^{1}(\overline\Omega),  u|_{\partial\Omega}=0 $, and for all~$ p>n $, there exists $ \alpha>0 $ such that $ p<\frac{1}{\alpha} $, then, given $ \epsilon\in(0,1) $, we see  that
	
	\begin{equation*}
	\int_{\Omega_{\uppercase\expandafter{\romannumeral 2}}\cap\{|z|\leqslant\epsilon\}}\frac{u(x+z)-u(x)}{|z|^{n+1}}\, dz\leqslant	\int_{\Omega_{\uppercase\expandafter{\romannumeral 2}}\cap\{|z|\leqslant\epsilon\}}\frac{\|\nabla u(x)\|_{\infty}}{|z|^{n-\alpha} d(x)^\alpha} \, dz,
	\end{equation*}
	and
	\begin{equation*}
	\int_{\Omega_{\uppercase\expandafter{\romannumeral 2}}\cap\{|z|\leqslant\epsilon\}}\frac{-u(x)}{|z|^{n+1}}\, dz\leqslant \int_{\Omega_{\uppercase\expandafter{\romannumeral 2}}\cap\{|z|\leqslant\epsilon\}}\frac{\|\nabla u(x)\|_{\infty}}{|z|^{n-\alpha} d(x)^\alpha} \, dz.
	\end{equation*}
	Furthermore, using the well-known Sobolev inequality, we obtain that 	
	\begin{align*}
	\|\uppercase\expandafter{\romannumeral 2}_{|z|\leqslant\epsilon}\|_{p}&\leqslant \bigg(\int_{\Omega} \,dx \bigg[\int_{\Omega_{\uppercase\expandafter{\romannumeral 2}}\cap\{|z|\leqslant\epsilon\}}\frac{\|\nabla u(x)\|_{\infty}}{|z|^{n-\alpha} d(x)^\alpha} \, dz\bigg]^p\bigg)^{1/p}
	\leqslant C(n,p)o(\epsilon)\|u\|_{W^{2,p}(\Omega)},
	\end{align*}
	and by the estimate of $ \|\uppercase\expandafter{\romannumeral 1}_{\epsilon<|z|\leqslant1}\|_{p} $, we have	
	\begin{equation*}
	\|\uppercase\expandafter{\romannumeral 2}_{\epsilon<|z|\leqslant 1}\|_{p}\leqslant C(n,p)\tau(\epsilon)\|u\|_{W^{1,p}(\Omega)}.
	\end{equation*}
	Thus,
	\begin{equation*}
	\|\uppercase\expandafter{\romannumeral 2}\|_{p}\leqslant C(n,p)\bigg[o(\epsilon) \|u\|_{W^{2,p}(\Omega)}+\tau(\epsilon) \|u\|_{W^{1,p}(\Omega)}\bigg].
	\end{equation*}
	
	By a similar argument  to the estimate of $ \|\uppercase\expandafter{\romannumeral 2}\|_{p} $, we see that
	\begin{equation*}
	\|\uppercase\expandafter{\romannumeral 3}\|_{p}\leqslant C(n,p)\bigg[o(\epsilon) \|u\|_{W^{2,p}(\Omega)}+\tau(\epsilon) \|u\|_{W^{1,p}(\Omega)}\bigg],
	\end{equation*}
	and
	\begin{equation*}
	\|\uppercase\expandafter{\romannumeral 4}\|_{p}\leqslant C(n,p)\bigg[o(\epsilon) \|u\|_{W^{2,p}(\Omega)}+\tau(\epsilon) \|u\|_{W^{1,p}(\Omega)}\bigg].
	\end{equation*}
	\smallskip
	
	\textbf{(iii). Estimate of the term  $  \uppercase\expandafter{\romannumeral 5} $.}	
	Thanks to $ u=0, $ a.e. in $ \mathbb{R}^n\backslash\Omega $, we get
	\begin{align*}
	\|\uppercase\expandafter{\romannumeral 5}\|_{p}&\leqslant\frac{1}{2}\bigg(\int_{\mathbb{R}^n}\, dx\bigg[\int_{\{|z|> 1\}}\frac{2|u(x)|+|u(x+z)|+|u(x-z)|}{|z|^{n+1}}\, dz\bigg]^p\bigg)^{1/p}\\
	&\leqslant C(p,n)\|u\|_{L^p(\mathbb{R}^n)}\\
	&\leqslant C(p,n)\|u\|_{W^{1,p}(\Omega)}.
	\end{align*}
	As a consequence of all these estimates, we obtain \eqref{eq: Lp estimate of 1/2} as desired.
	\medskip
	
	\par\noindent \textbf{Case 2.} $ p=n $. Then $ u\in C^{0,1}(\overline\Omega) $.  The Sobolev inequality  implies that
	\begin{equation*}
	\|\nabla u(x)\|_{\infty}\leqslant C(p,n)\|u\|_{W^{2,n}(\Omega)}.
	\end{equation*}
	Thus, \eqref{eq: Lp estimate of 1/2} follows from the \textbf{Case 1}.
	
	\medskip
	
	\par\noindent \textbf{Case 3.} $ p<n $. According to the proof of \textbf{Case 1}, we only need to verify the estimate of $ \uppercase\expandafter{\romannumeral 2}_{|z|\leqslant \epsilon} $.
	Next, using the H\"{o}lder  and Sobolev inequalities, we find
	\begin{align*}
	&\bigg(\int_{\Omega}\, dx \bigg[\int_{\Omega_{\uppercase\expandafter{\romannumeral 2}}\cap\{|z|\leqslant \epsilon\}}\frac{u(x+z)-u(x)}{|z|^{n+1}}\, dz\bigg]^p\bigg)^{1/p}\\
	\leqslant&\,\bigg(\int_{\Omega}\, dx\bigg[	\int_{\Omega_{\uppercase\expandafter{\romannumeral 2}}\cap\{|z|\leqslant \epsilon\}}\frac{\int_{0}^{1}| \nabla u(x+\theta z)|\, d\theta}{|z|^{n-\alpha} d(x)^\alpha} \, dz\bigg]^p\bigg)^{1/p}\\
	\leqslant&\, \bigg(\int_{\mathbb{R}^n}\, dx\bigg[	\int_{\Omega_{\uppercase\expandafter{\romannumeral 2}}\cap\{|z|\leqslant \epsilon\}}\frac{\int_{0}^{1}| \nabla Eu(x+\theta z)|\, d\theta}{|z|^{n-\alpha} [d(x)^\alpha\chi_{\Omega}+(d(x)+1)\chi_{\mathbb{R}^n\backslash\Omega}]} \, dz\bigg]^p\bigg)^{1/p}\\
	\leqslant&\,  \left\|\frac{|\nabla Eu|}{ [d(x)^\alpha\chi_{\Omega}+(d(x)+1)\chi_{\mathbb{R}^n\backslash\Omega}]}\right\|_{L^p(\mathbb{R}^n)}\bigg(\int_{\{|z|\leqslant \epsilon\}}\frac{1}{|z|^{n-\alpha}}\, dz\bigg)\\
	\leqslant&\, \epsilon^\alpha\bigg(\int_{\mathbb{R}^n}|\nabla Eu|^{\frac{pq}{q-p}}\,dx\bigg)^{\frac{q-p}{pq}}\bigg(\int_{\mathbb{R}^n}\frac{1}{[d(x)^\alpha\chi_{\Omega}+(d(x)+1)\chi_{\mathbb{R}^n\backslash\Omega}]^{q}}\, dx\bigg)^{1/q}, \text{ for } p<n<q<\frac{1}{\alpha}\\
	\leqslant&\, C(n,p,\Omega) \epsilon^\alpha \bigg(\int_{\mathbb{R}^n}|\nabla Eu|^{\frac{pq}{q-p}}\,dx\bigg)^{\frac{q-p}{pq}}
	\leqslant C(n,p,\Omega) \epsilon^\alpha \|Eu\|_{W^{2,p}(\mathbb{R}^n)}
	\leqslant C(n,p,\Omega)o(\epsilon) \|u\|_{W^{2,p}(\Omega)}.
	\end{align*}
	Also,
	\begin{align*}
	&\bigg(\int_{\Omega} \, dx\bigg[\int_{\Omega_{\uppercase\expandafter{\romannumeral 2}}\cap\{|z|\leqslant \epsilon\}}\frac{-u(x)}{|z|^{n+1}}\, dz\bigg]^p\bigg)^{1/p}\\
	\leqslant\,& \bigg(\int_{\Omega}\, dx\bigg[\int_{\Omega_{\uppercase\expandafter{\romannumeral 2}}\cap\{|z|\leqslant \epsilon\}}\frac{|u(x)|}{|z|^{n-\alpha} d(x)^{1+\alpha}} \, dz\bigg]^p\bigg)^{1/p}
	\leqslant \epsilon^\alpha \bigg(\int_{\Omega}\left|\frac{u(x)}{ d(x)}\right|^p \frac{1}{d(x)^{\alpha p}}\, dx\bigg)^{1/p}\\
	\leqslant\,& C(n,p,\Omega) \epsilon^\alpha \bigg(\int_{\Omega} \left|\frac{u(x)}{ d(x)}\right|^{\frac{pq}{q-p}}\, dx\bigg)^{\frac{q-p}{pq}}, \text{ for } p<n<q<\frac{1}{\alpha}\\
	\leqslant\,&  C(n,p,\Omega) \epsilon^\alpha \bigg(\int_{\mathbb{R}^n}|\nabla Eu|^{\frac{pq}{q-p}}\,dx\bigg)^{\frac{q-p}{pq}}
	\leqslant C(n,p,\Omega)o(\epsilon) \|u\|_{W^{2,p}(\Omega)}.
	\end{align*}
	Combining the above two estimates, we get
	\begin{align*}
	\|\uppercase\expandafter{\romannumeral 2}_{|z|\leqslant\epsilon}\|_{p}
	&\leqslant C(n,p,\Omega)o(\epsilon)\|u\|_{W^{2,p}(\Omega)}.
	\end{align*}
	Therefore,  we obtain \eqref{eq: Lp estimate of 1/2} for all cases.
\end{proof}

We observe that Lemmata~\ref{lemma Lp estimate about fractional operator} and~\ref{lemma Lp estimate about 1/2 operator}
cover the range~$s\in\left(0,\frac12\right]$ which is of interest in
the statements of Theorems~\ref{theorem C^1 alpha} and~\ref{theorem w 2p}.
For completeness, though it will not be really utilized in this paper (apart from the comment in Remark~\ref{REMARK for some beta}),
we also point out a similar result when~$s\in\left(\frac{1}{2},1\right)$.

\begin{Lemma}\label{Lp estimate for some p}
 	 Let $ \Omega $ be a $ C^{1,1} $ domain in $ \mathbb{R}^n $ and $ s\in\left(\frac{1}{2},1\right) $. Then for $ n<p<\frac{n}{2s-1}$,  we have
 	 \begin{equation*}
 	 \|(-\Delta)^s u\|_{p}\leqslant C\left[o(\epsilon)\|u\|_{W^{2,p}(\Omega)}+\tau(\epsilon)\|u\|_{W^{1,p}(\Omega)}\right] ,  \qquad\text{ for every $\epsilon>0$},
 	 \end{equation*}where $C$ is a constant independent of $u$.
 \end{Lemma}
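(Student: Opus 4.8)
The plan is to adapt, essentially verbatim, the scheme developed in the proof of Lemma~\ref{lemma Lp estimate about 1/2 operator}, now exploiting the extra regularity afforded by the assumption $p>n$: by Morrey's embedding and the Extension Theorem~\ref{theorem extension}, any $u\in W^{2,p}(\Omega)$ extends to $Eu\in C^{1,\gamma}(\mathbb R^n)$ with $\gamma=1-\frac np$, and the restriction $p<\frac n{2s-1}$ is precisely the condition $\gamma>2s-1$. First I would write, using the zero extension of $u$ and the evenness of the kernel,
\begin{equation*}
-(-\Delta)^su(x)=\frac12\int_{\mathbb R^n}\frac{u(x+z)+u(x-z)-2u(x)}{|z|^{n+2s}}\,dz,
\end{equation*}
and then split the $z$-domain exactly as for $s=\frac12$ into the four near-diagonal pieces $\Omega_{\uppercase\expandafter{\romannumeral 1}},\dots,\Omega_{\uppercase\expandafter{\romannumeral 4}}$ (according to whether $x\pm z$ lie in $\Omega$) plus the tail $\{|z|>1\}$, and each near-diagonal piece further into $\{|z|\le\epsilon\}$ and $\{\epsilon<|z|\le 1\}$.

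On the two-sided piece $\Omega_{\uppercase\expandafter{\romannumeral 1}}$ I would use, for $|z|\le\epsilon$, the second-order Taylor estimate $|u(x+z)+u(x-z)-2u(x)|\le|z|^2\int_0^1\!\!\int_0^\theta|D^2Eu(x+\theta'z)|\,d\theta'd\theta$ together with Minkowski's integral inequality and $\int_{\{|z|\le\epsilon\}}|z|^{-(n+2s-2)}\,dz=c\,\epsilon^{2-2s}$ (finite since $s<1$), to get a bound $C\,o(\epsilon)\,\|u\|_{W^{2,p}(\Omega)}$; for $\epsilon<|z|\le 1$ the kernel is no longer singular and a first-order expansion gives $C\,\tau(\epsilon)\,\|u\|_{W^{1,p}(\Omega)}$. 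The three boundary pieces are the delicate ones. The geometric fact to exploit is that if one of $x\pm z$ lies outside the $C^{1,1}$ domain $\Omega$ then $|z|\gtrsim d(x):=\mathrm{dist}(x,\partial\Omega)$ (and in fact $|z|\gtrsim\sqrt{d(x)}$ in the two-sided exterior piece $\Omega_{\uppercase\expandafter{\romannumeral 4}}$, which makes that one harmless). Writing each boundary integrand as a ``difference'' part $\frac{u(x+z)-u(x)}{|z|^{n+2s}}$ plus a ``pure boundary'' part $\frac{-u(x)}{|z|^{n+2s}}$, using $|u(x)|\le\|\nabla Eu\|_{\infty}\,d(x)$ (valid since $u\in C^1(\overline\Omega)$ vanishes on $\partial\Omega$) and $|u(x+z)-u(x)|\le|z|\int_0^1|\nabla Eu(x+\theta z)|\,d\theta$, and then trading powers of $|z|$ against powers of $d(x)$ by means of $|z|\ge d(x)$, I would reduce matters to weighted estimates of the type $\big\|\,d^{-\theta}\,\nabla Eu\,\big\|_{L^p}$ and $\big\|\,d^{-\theta}\,u/d\,\big\|_{L^p}$ with $\theta$ slightly larger than $2s-1$; these are then controlled through a H\"older splitting combined with the Hardy inequality $\|u/d\|_{L^p}\le C\|\nabla u\|_{L^p}$, the Sobolev embedding $W^{2,p}(\mathbb R^n)\hookrightarrow W^{1,r}(\mathbb R^n)$ for suitable $r$ (available because $p>n$), and $\int_{\{|z|\le\epsilon\}}|z|^{-(n-\delta)}\,dz=c\,\epsilon^{\delta}$, producing altogether a bound $C\big[o(\epsilon)\|u\|_{W^{2,p}(\Omega)}+\tau(\epsilon)\|u\|_{W^{1,p}(\Omega)}\big]$. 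Finally, on $\{|z|>1\}$ the kernel is globally integrable, so that piece is bounded by $C\|u\|_{L^p(\mathbb R^n)}\le C\|u\|_{W^{1,p}(\Omega)}$. Summing the contributions of all pieces gives the asserted inequality.

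The hard part is the analysis near $\partial\Omega$: contrary to the range $s\in(0,\frac12]$, the kernel $|z|^{-(n+2s)}$ is now too singular to be tamed by a single Taylor cancellation, so one must combine the cancellation with the distance weight $d(x)$ and make sure that the exponents appearing in the ensuing H\"older and Hardy--Sobolev estimates remain admissible; it is exactly the bookkeeping of these exponents, together with the need for $u\in C^{1,\gamma}$ (hence for $p>n$), that pins down the admissible range $n<p<\frac n{2s-1}$. All the remaining computations are routine modifications of those already performed for $s=\frac12$.
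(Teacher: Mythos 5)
Your skeleton coincides with the paper's up to the decisive step: the paper likewise reuses the five-way decomposition of Lemma~\ref{lemma Lp estimate about 1/2 operator} (two-sided piece, one-sided boundary pieces, tail), the Extension Theorem~\ref{theorem extension} and the fact that $p>n$ gives $\|\nabla u\|_{\infty}\lesssim\|u\|_{W^{2,p}(\Omega)}$, and it also explicitly says that only the one-sided piece with $|z|\leqslant\epsilon$ needs a new argument. The gap is precisely in how you propose to close that piece. You reduce it to fixed weighted estimates of the form $\|d^{-\theta}\nabla Eu\|_{L^p}$ and $\|d^{-\theta}u/d\|_{L^p}$ with $\theta$ slightly larger than $2s-1$, to be handled by H\"older, Hardy and Sobolev. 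This step fails: $\nabla u$ does not vanish on $\partial\Omega$ (and $u/d$ tends to $|\partial_\nu u|$ there), so these weighted norms are finite only when $\theta p<1$, i.e.\ essentially $p<\frac{1}{2s-1}$, whereas the lemma allows $p$ up to $\frac{n}{2s-1}$ with $n\geqslant 2$ (for instance $n=2$, $s=\frac34$: every admissible $p\in(2,4)$ has $\theta p>1$). Hardy's inequality bounds $\|u/d\|_{p}$ by $\|\nabla u\|_{p}$, not the extra-weighted quantities you need, and no H\"older splitting removes a non-integrable power of $d$. In fact the one-sided terms genuinely behave like $d(x)^{1-2s}$ pointwise near $\partial\Omega$, so they cannot be absorbed into any $\epsilon$-independent weighted norm of $u$; the smallness has to come from somewhere else. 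Relatedly, your claim that the restriction $p<\frac{n}{2s-1}$ ``is precisely $\gamma=1-\frac np>2s-1$'' is incorrect: that inequality reads $p>\frac{n}{2-2s}$, which is a different condition.

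What the paper actually does at this point is to exploit a localization in the $x$-variable which your plan never uses: if $|z|\leqslant\epsilon$ and one of $x\pm z$ lies outside $\Omega$, then $d(x)\leqslant\epsilon$, so the offending term vanishes identically on $\Omega_\epsilon=\{d(x)>\epsilon\}$ and the $x$-integration is confined to the thin strip $\Omega\setminus\Omega_\epsilon$. There the paper bounds the integrand by $\|\nabla u\|_\infty\,|z|^{-(n+2s-1-\gamma)}d(x)^{-\gamma}$ with a partial trade $2s-1<\gamma<\frac np$, gains $\epsilon^{\gamma+1-2s}$ from the $z$-integral and a further positive power of $\epsilon$ from integrating $d^{-\gamma p}$ over the strip, ending with $C\,\epsilon^{1-2s+\frac np}\,\|u\|_{W^{2,p}(\Omega)}$; this is exactly where the hypothesis $p<\frac{n}{2s-1}$ enters (it makes the exponent of $\epsilon$ positive). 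Without using the thinness of the strip in $x$ (and the sup-norm of the gradient there), the boundary estimate cannot close, so as written your proposal has a genuine gap at the only step that distinguishes $s\in\left(\frac12,1\right)$ from $s=\frac12$.
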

\begin{proof}
	It suffices to verify the estimate of $ \uppercase\expandafter{\romannumeral 2}_{|z|\leqslant \epsilon} $. Let  $\Omega_\epsilon:=\{x\in\Omega:d(x)>\epsilon\} $ and $ \gamma$ be a positive  constant satisfying $ n<p<\frac{n}{\gamma}<\frac{n}{2s-1} $. Note that
	\begin{equation*}
		\bigg(\int_{\Omega_\epsilon}\, dx \bigg[\int_{\Omega_{\uppercase\expandafter{\romannumeral 2}}\cap\{|z|\leqslant \epsilon\}}\frac{u(x+z)-u(x)}{|z|^{n+2s}}\, dz\bigg]^p\bigg)^{1/p}=0,
	\end{equation*}
	and
	\begin{equation*}
		\bigg(\int_{\Omega_\epsilon} \, dx\bigg[\int_{\Omega_{\uppercase\expandafter{\romannumeral 2}}\cap\{|z|\leqslant \epsilon\}}\frac{-u(x)}{|z|^{n+2s}}\, dz\bigg]^p\bigg)^{1/p}=0.
	\end{equation*}
	Thus, we get that
	\begin{align*}
	&\bigg(\int_{\Omega}\, dx \bigg[\int_{\Omega_{\uppercase\expandafter{\romannumeral 2}}\cap\{|z|\leqslant \epsilon\}}\frac{u(x+z)-u(x)}{|z|^{n+2s}}\, dz\bigg]^p\bigg)^{1/p}\\
	\leqslant &\  \bigg(\int_{\Omega\backslash\Omega_\epsilon}\, dx\bigg[	\int_{\Omega_{\uppercase\expandafter{\romannumeral 2}}\cap\{|z|\leqslant \epsilon\}}\frac{\| \nabla u(x)\|_{\infty}}{|z|^{n+2s-1-\gamma} d(x)^\gamma} \, dz\bigg]^p\bigg)^{1/p}\\
	\leqslant &\ C(n,p,\gamma,s)\| \nabla u(x)\|_{\infty}\epsilon^{\gamma+1-2s}\bigg(\int_{\Omega\backslash\Omega_\epsilon}\frac{1}{d(x)^{\gamma p}}\, dx\bigg)^{1/p}\\
	\leqslant &\ C(n,p,\gamma,s)\| \nabla u(x)\|_{\infty}\epsilon^{\gamma+1-2s}\bigg(\int_{y\in\partial\Omega}\, dy \int_{0<|x-y|\leqslant \epsilon}\frac{1}{|x-y|^{\gamma p}}\, dx\bigg)^{1/p}\\
	\leqslant &\ C(n,p,\gamma,s,\Omega)\| \nabla u(x)\|_{\infty}\epsilon^{\gamma+1-2s} \epsilon^{-\gamma+\frac{n}{p}}\\
	\leqslant &\ C(n,p,\gamma,s,\Omega)\epsilon^{1-2s+\frac{n}{p}}\|u\|_{W^{2,p}(\Omega)}.
	\end{align*}
		Also,
	\begin{align*}
	&\bigg(\int_{\Omega} \, dx\bigg[\int_{\Omega_{\uppercase\expandafter{\romannumeral 2}}\cap\{|z|\leqslant \epsilon\}}\frac{-u(x)}{|z|^{n+2s}}\, dz\bigg]^p\bigg)^{1/p}\\
	\leqslant\,&\  \bigg(\int_{\Omega\backslash\Omega_\epsilon}\, dx\bigg[\int_{\Omega_{\uppercase\expandafter{\romannumeral 2}}\cap\{|z|\leqslant \epsilon\}}\frac{\|\nabla u(x)\|_{\infty}}{|z|^{n+2s-1-\gamma} d(x)^{\gamma}} \, dz\bigg]^p\bigg)^{1/p}\\
	\leqslant\,& \ C(n,p,\gamma,s,\Omega)\epsilon^{1-2s+\frac{n}{p}}\|u\|_{W^{2,p}(\Omega)}.
	\end{align*}
	Combining the above two estimates, we get
	\begin{align*}
	\|\uppercase\expandafter{\romannumeral 2}_{|z|\leqslant\epsilon}\|_{p}
	&\leqslant C(n,p,s,\Omega)o(\epsilon)\|u\|_{W^{2,p}(\Omega)}.
	\end{align*}
Therefore, as argued in the proof of Lemma~\ref{lemma Lp estimate about 1/2 operator} 	 for $ \uppercase\expandafter{\romannumeral 1},\uppercase\expandafter{\romannumeral 2}_{\epsilon<|z|<1} $, and $ \uppercase\expandafter{\romannumeral 3} \sim \uppercase\expandafter{\romannumeral 5} $, we obtain the desired result.
\end{proof}

 \subsubsection{Towards the existence of the solution to $ -\Delta u+(-\Delta)^su+\lambda u=f $}\label{section 4.12}

Now we develop some preliminary material needed to establish the existence result
in Theorem~\ref{theorem w 2p}. Here, we look at a linear perturbation of the equation in Theorem~\ref{theorem w 2p}.

 \begin{Lemma}\label{lemma existence of lambda  }
 	Under the assumptions of Theorem~\ref{theorem w 2p} and $ s\in(0,\frac{1}{2}) $, let  $ \lambda_1> 0$ be given, large enough and independent of $ f $. Then, the problem
 	\begin{equation}\label{eq: lambda equation}
 	-\Delta u+(-\Delta)^su+\lambda u=f, \qquad \text{in } \Omega
 	\end{equation}
 	has a unique solution $ u\in W^{2,p}(\Omega)\cap W^{1,p}_0(\Omega) $ for any $ \lambda\geqslant \lambda_1$.
 \end{Lemma}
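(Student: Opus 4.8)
The plan is to solve \eqref{eq: lambda equation} by a Banach fixed point argument, viewing $(-\Delta)^s$ as a lower-order perturbation of $-\Delta+\lambda$ and using the decay in $\lambda$ of the classical resolvent estimate \eqref{eq:lapalce estimate 2} to defeat the blow-up of $\tau(\epsilon)$ in the interpolation inequality of Lemma~\ref{lemma Lp estimate about fractional operator}.

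First I would set $Y:=W^{2,p}(\Omega)\cap W^{1,p}_0(\Omega)$ and, for $\lambda\geqslant\lambda_0$, define $\mathcal T_\lambda\colon Y\to Y$ by letting $\mathcal T_\lambda v$ be the unique solution in $Y$ --- furnished by the classical theory recalled in~\eqref{eq: classical equation lambda}--\eqref{eq:lapalce estimate 2} --- of the purely local problem $-\Delta u+\lambda u=f-(-\Delta)^s v$ in $\Omega$. This is well defined since $(-\Delta)^s v\in L^p(\Omega)$ by Lemma~\ref{lemma Lp estimate about fractional operator}, hence $f-(-\Delta)^s v\in L^p(\Omega)$. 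A function $u\in Y$ solves \eqref{eq: lambda equation} if and only if it is a fixed point of $\mathcal T_\lambda$, so it suffices to produce a unique fixed point for $\lambda$ large.

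Next I would equip $Y$ with the $\lambda$-dependent norm $N_\lambda(v):=\|v\|_{W^{2,p}(\Omega)}+(\lambda-\lambda_0)\|v\|_p$, which for each fixed $\lambda>\lambda_0$ is equivalent to $\|\cdot\|_{W^{2,p}(\Omega)}$ (so $(Y,N_\lambda)$ is a Banach space). For $v_1,v_2\in Y$, writing $w:=\mathcal T_\lambda v_1-\mathcal T_\lambda v_2$ and $h:=-(-\Delta)^s(v_1-v_2)$, one has $-\Delta w+\lambda w=h$, so \eqref{eq:lapalce estimate 1}--\eqref{eq:lapalce estimate 2} give $N_\lambda(w)\leqslant C\|h\|_p$ with $C$ independent of $\lambda$; meanwhile Lemma~\ref{lemma Lp estimate about fractional operator} gives, for every $\epsilon>0$,
\begin{equation*}
\|h\|_p\leqslant C\bigl[\epsilon\,\|v_1-v_2\|_{W^{2,p}(\Omega)}+\tau(\epsilon)\,\|v_1-v_2\|_p\bigr]\leqslant C\Bigl(\epsilon+\frac{\tau(\epsilon)}{\lambda-\lambda_0}\Bigr)\,N_\lambda(v_1-v_2).
\end{equation*}
Then I would first fix $\epsilon=\epsilon_0$ small so that the $\epsilon$-term above is $\leqslant\frac14$, and only afterwards --- with $\tau(\epsilon_0)$ now a finite number --- choose $\lambda_1>\lambda_0$ so large that $\tau(\epsilon_0)/(\lambda_1-\lambda_0)$ contributes $\leqslant\frac14$ as well; for $\lambda\geqslant\lambda_1$ this yields $N_\lambda(w)\leqslant\tfrac12 N_\lambda(v_1-v_2)$, i.e. $\mathcal T_\lambda$ is a $\tfrac12$-contraction on $(Y,N_\lambda)$, with $\lambda_1$ depending only on $n,s,p,\Omega$ and not on $f$. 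The Banach fixed point theorem then gives a unique $u\in Y$ with $\mathcal T_\lambda u=u$, which is the unique solution of \eqref{eq: lambda equation} for every $\lambda\geqslant\lambda_1$.

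The hard part is exactly the unboundedness of $\tau(\epsilon)$ as $\epsilon\to0$, which blocks a direct contraction estimate from the interpolation inequality; the remedy is to run that inequality inside the $\lambda$-weighted norm $N_\lambda$ and to absorb the bad term using the gain $(\lambda-\lambda_0)^{-1}$ provided by \eqref{eq:lapalce estimate 2}. Everything else is routine: well-posedness of $\mathcal T_\lambda$, equivalence of $N_\lambda$ with the $W^{2,p}$-norm, and the identification of fixed points of $\mathcal T_\lambda$ with solutions of \eqref{eq: lambda equation}.
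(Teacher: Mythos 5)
Your proposal is correct and follows essentially the same route as the paper: define $T_\lambda$ through the classical resolvent of $-\Delta+\lambda$, work in the equivalent $\lambda$-weighted norm $\|\cdot\|_{W^{2,p}(\Omega)}+(\lambda-\lambda_0)\|\cdot\|_p$, and use Lemma~\ref{lemma Lp estimate about fractional operator} with $\epsilon$ fixed first and $\lambda_1$ chosen afterwards to absorb $\tau(\epsilon)$, yielding a contraction and hence a unique fixed point. The order of quantifiers (first $\epsilon$, then $\lambda_1$) is exactly the paper's argument, so no gap remains.
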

\begin{proof}
	For a given $ w\in W^{2,p}(\Omega)\cap W^{1,p}_0(\Omega) $,  the problem
	\begin{equation*}
		-\Delta u+\lambda u=f-(-\Delta)^sw, \qquad \text{in } \Omega
	\end{equation*}
has a unique solution $ u\in W^{2,p}(\Omega)\cap W^{1,p}_0(\Omega) $ since $f -(-\Delta)^s w\in L^p(\Omega).$ Thus, we can define a mapping $ T_\lambda:w\rightarrow u $ of $ W^{2,p}(\Omega)\cap W^{1,p}_0(\Omega) $ into itself. We are going to prove that it is possible to choose a positive number $ \lambda_1$ large enough in such a way that $ T_\lambda $ is a contraction mapping for any $ \lambda\geqslant \lambda_1$.	
	
	Indeed, let $ w_1,w_2\in W^{2,p}(\Omega)\cap W^{1,p}_0(\Omega) $ and $ T_\lambda w_1=u_1, T_\lambda w_2=u_2 $. Then,
	\begin{equation*}
		-\Delta (u_1-u_2)+\lambda (u_1-u_2)=-(-\Delta)^s(w_1-w_2), \qquad \text{in } \Omega.
	\end{equation*}
	Due to \eqref{eq:lapalce estimate 1},  Lemma~\ref{lemma Lp estimate about fractional operator} and the Gagliardo-Nirenberg interpolation inequality, we have that
	\begin{align*}
		\|u_1-u_2\|_{W^{2,p}(\Omega)}&\leqslant C\|-(-\Delta)^s(w_1-w_2)\|_{p}\\
		&\leqslant C\bigg[\epsilon\|w_1-w_2\|_{W^{2,p}(\Omega)}+\tau(\epsilon)\|w_1-w_2\|_{p}\bigg].
	\end{align*}
	Using \eqref{eq:lapalce estimate 2} instead of \eqref{eq:lapalce estimate 1}, we can see that
	\begin{equation*}
		(\lambda-\lambda_0)\|u_1-u_2\|_{p}\leqslant C\bigg[\epsilon\|w_1-w_2\|_{W^{2,p}(\Omega)}+\tau(\epsilon)\|w_1-w_2\|_{p}\bigg].
	\end{equation*}
Since $ C $ is independent of $ \lambda $, then we can provide $  W^{2,p}(\Omega)\cap W^{1,p}_0(\Omega) $ with the equivalent norm
	\begin{equation*}
		\||\cdot \||_{W^{2,p}}:= \|\cdot \|_{W^{2,p}(\Omega)}+(\lambda-\lambda_0)\|\cdot\|_{p}.
	\end{equation*}
	First, let~$ k\in(0,1) $ be given. We choose $ \epsilon>0$ small enough such that
	\begin{equation*}
		C\epsilon\leqslant\frac{k}{2},
	\end{equation*}
	 Next, take $ \lambda_1>0$ large enough such that
	 \begin{equation*}
	 	C\tau(\epsilon)<(\lambda_1-\lambda_0)\frac{k}{2}.
	 \end{equation*}
	 From this, for $ \lambda\geqslant\lambda_1 $ it follows  that
	 \begin{equation*}
	 \||T_\lambda w_1-T_\lambda w_2 \||_{W^{2,p}}\leqslant k \||w_1-w_2 \||_{W^{2,p}},
	 \end{equation*}
	i.e., $T_\lambda$ is a contraction, hence the result holds as desired.
\end{proof}

  Next, we consider the case~$ s=\frac{1}{2}$.

  \begin{Lemma}\label{lemma existence of lambda 1/2  }
  	Under the assumptions of Theorem~\ref{theorem w 2p}, let  $ \lambda'_1>0$ be given, large enough and independent of $f$. Then the problem
  	\begin{equation}\label{eq: lambda equation 1/2}
  	-\Delta u+(-\Delta)^{\frac{1}{2}}u+\lambda u=f, \qquad \text{in } \Omega
  	\end{equation}
  	has a unique solution $ u\in W^{2,p}(\Omega)\cap W^{1,p}_0(\Omega) $ for any $ \lambda\geqslant \lambda'_1$.
  \end{Lemma}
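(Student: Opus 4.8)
The plan is to run exactly the contraction‑mapping scheme used in the proof of Lemma~\ref{lemma existence of lambda }, so the only genuinely new point is that the interpolation estimate now available for $(-\Delta)^{\frac{1}{2}}$, namely \eqref{eq: Lp estimate of 1/2} in Lemma~\ref{lemma Lp estimate about 1/2 operator}, controls the lower–order contribution by the full norm $\|u\|_{W^{1,p}(\Omega)}$ rather than merely by $\|u\|_p$, as was the case in \eqref{eq: Lp estimate of fractional s}. The first step is therefore to upgrade \eqref{eq: Lp estimate of 1/2} to an estimate having exactly the shape of \eqref{eq: Lp estimate of fractional s}.

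To this end, I would recall the classical Gagliardo--Nirenberg interpolation inequality between $W^{2,p}(\Omega)$ and $L^p(\Omega)$: for every $\delta>0$ there is a constant $C(\delta)>0$ with $\|u\|_{W^{1,p}(\Omega)}\leqslant \delta\,\|u\|_{W^{2,p}(\Omega)}+C(\delta)\,\|u\|_p$ for all $u\in W^{2,p}(\Omega)$. Inserting this into \eqref{eq: Lp estimate of 1/2} gives
\[
\|(-\Delta)^{\frac{1}{2}}u\|_p\leqslant C\Big[\big(o(\epsilon)+\tau(\epsilon)\,\delta\big)\,\|u\|_{W^{2,p}(\Omega)}+\tau(\epsilon)\,C(\delta)\,\|u\|_p\Big].
\]
Now, given any prescribed $\eta>0$, I would first choose $\epsilon>0$ small enough that $C\,o(\epsilon)<\eta/2$, which freezes $\tau(\epsilon)$ to a finite value, and only then choose $\delta>0$ small enough that $C\,\tau(\epsilon)\,\delta<\eta/2$. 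Writing $\tilde{\tau}(\eta):=C\,\tau(\epsilon)\,C(\delta)$, a finite constant depending only on $\eta$ (and on $n,p,\Omega$), one arrives at
\[
\|(-\Delta)^{\frac{1}{2}}u\|_p\leqslant \eta\,\|u\|_{W^{2,p}(\Omega)}+\tilde{\tau}(\eta)\,\|u\|_p ,\qquad\text{for every }\eta>0,
\]
which is precisely the analogue of \eqref{eq: Lp estimate of fractional s} for $s=\frac{1}{2}$.

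With this estimate in hand, the rest is verbatim that of Lemma~\ref{lemma existence of lambda }. For $w\in W^{2,p}(\Omega)\cap W^{1,p}_0(\Omega)$, the boundedness part of Lemma~\ref{lemma Lp estimate about 1/2 operator} gives $f-(-\Delta)^{\frac{1}{2}}w\in L^p(\Omega)$, so $-\Delta u+\lambda u=f-(-\Delta)^{\frac{1}{2}}w$ admits a unique solution $u=:T_\lambda w\in W^{2,p}(\Omega)\cap W^{1,p}_0(\Omega)$ by the classical theory recalled in \eqref{eq: classical equation lambda}--\eqref{eq:lapalce estimate 2}. Subtracting the equations for $u_i=T_\lambda w_i$ yields $-\Delta(u_1-u_2)+\lambda(u_1-u_2)=-(-\Delta)^{\frac{1}{2}}(w_1-w_2)$; combining \eqref{eq:lapalce estimate 1}, \eqref{eq:lapalce estimate 2} with the estimate just obtained, and equipping $W^{2,p}(\Omega)\cap W^{1,p}_0(\Omega)$ with the equivalent norm $\||\cdot\||_{W^{2,p}}:=\|\cdot\|_{W^{2,p}(\Omega)}+(\lambda-\lambda_0)\|\cdot\|_p$, one obtains, for a constant $C$ independent of $\lambda$,
\[
\||T_\lambda w_1-T_\lambda w_2\||_{W^{2,p}}\leqslant C\big[\eta\,\|w_1-w_2\|_{W^{2,p}(\Omega)}+\tilde{\tau}(\eta)\,\|w_1-w_2\|_p\big].
\]
Fixing $k\in(0,1)$, one takes $\eta$ with $C\eta\leqslant k/2$ and then $\lambda'_1>\lambda_0$ so large that $C\,\tilde{\tau}(\eta)\leqslant(\lambda'_1-\lambda_0)k/2$; for every $\lambda\geqslant\lambda'_1$ the map $T_\lambda$ is a $k$‑contraction, and the Banach fixed point theorem produces the unique solution.

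The only delicate point is precisely the order of the quantifiers in the interpolation step: $\epsilon$ — hence $\tau(\epsilon)$ — must be frozen \emph{before} $\delta$ is sent to $0$, so that the coefficient of $\|u\|_{W^{2,p}(\Omega)}$ can be made arbitrarily small while the coefficient of $\|u\|_p$, though possibly very large, remains \emph{finite}; this is exactly the structure that taking $\lambda$ large is designed to absorb. Everything else is identical to the case $s\in\left(0,\frac{1}{2}\right)$ already treated.
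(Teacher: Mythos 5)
Your proposal is correct and follows essentially the same route as the paper: the paper's proof also runs the contraction scheme of Lemma~\ref{lemma existence of lambda  } verbatim, absorbing the $\|\cdot\|_{W^{1,p}(\Omega)}$ term from \eqref{eq: Lp estimate of 1/2} via the Gagliardo--Nirenberg interpolation with a small parameter $\delta$, choosing $\epsilon$ and $\delta$ small first and then $\lambda'_1$ large. Your explicit remark on the order of the quantifiers (freeze $\epsilon$, hence $\tau(\epsilon)$, before sending $\delta\to 0$) is exactly the point implicit in the paper's choice $C[o(\epsilon)+\tau(\epsilon)\delta]\leqslant k/2$ followed by $C\tau(\epsilon)/\delta<(\lambda'_1-\lambda_0)k/2$.
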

  \begin{proof} The proof is close in spirit to
  that of Lemma~\ref{lemma existence of lambda  }, relying here on Lemma~\ref{lemma Lp estimate about 1/2 operator}
instead of Lemma~\ref{lemma Lp estimate about fractional operator}. We provide full details for the reader's convenience.

  	For a  $ w\in W^{2,p}(\Omega)\cap W^{1,p}_0(\Omega) $,  the problem
  	\begin{equation*}
  	-\Delta u+\lambda u=f-(-\Delta)^{\frac{1}{2}}w, \qquad \text{in } \Omega
  	\end{equation*}
  	has a unique solution $ u\in W^{2,p}(\Omega)\cap W^{1,p}_0(\Omega) $ since $f -(-\Delta)^{\frac{1}{2}}w\in L^p(\Omega).$ Thus, we can define a mapping $ T_\lambda:w\rightarrow u $ of $ W^{2,p}(\Omega)\cap W^{1,p}_0(\Omega) $ into itself. We are going to prove that it is possible to choose a positive number $ \lambda'_1$ large enough such that $T_\lambda $ is a contraction mapping for any $ \lambda\geqslant \lambda'_1$.	
  	
  	Indeed, let $ w_1,w_2\in W^{2,p}(\Omega)\cap W^{1,p}_0(\Omega) $ and $ T_\lambda w_1=u_1, T_\lambda w_2=u_2$. Then,
  	\begin{equation*}
  	-\Delta (u_1-u_2)+\lambda (u_1-u_2)=-(-\Delta)^{\frac{1}{2}}(w_1-w_2), \qquad \text{in } \Omega.
  	\end{equation*}
  	Due to \eqref{eq:lapalce estimate 1},  Lemma~\ref{lemma Lp estimate about 1/2 operator} and the Gagliardo-Nirenberg interpolation inequality, we have that
  	\begin{align*}
  	\|u_1-u_2\|_{W^{2,p}(\Omega)}&\leqslant C\|-(-\Delta)^s(w_1-w_2)\|_{p}\\
  	&\leqslant C\bigg[({o(\epsilon)}+\tau(\epsilon)\delta)\|w_1-w_2\|_{W^{2,p}(\Omega)}+
  	\frac{\tau(\epsilon)}{\delta}\|w_1-w_2\|_{p}\bigg].
  	\end{align*}
  	Using \eqref{eq:lapalce estimate 2} instead of \eqref{eq:lapalce estimate 1}, one can get that
  	\begin{equation*}
  	(\lambda-\lambda_0)\|u_1-u_2\|_{p}\leqslant C\bigg[({o(\epsilon)}+\tau(\epsilon)\delta)\|w_1-w_2\|_{W^{2,p}(\Omega)}+
  	\frac{\tau(\epsilon)}{\delta}\|w_1-w_2\|_{p}\bigg].
  	\end{equation*}
  	We observe that $C$ is independent of $ \lambda$, hence we can provide $  W^{2,p}(\Omega)\cap W^{1,p}_0(\Omega) $ with the equivalent norm
  	\begin{equation*}
  	\||\cdot \||_{W^{2,p}}= \|\cdot \|_{W^{2,p}(\Omega)}+(\lambda-\lambda_0)\|\cdot\|_{p}.
  	\end{equation*}
  	First, let $ k\in(0,1) $. We can choose $ \epsilon, \delta> 0$ small enough such that
  	\begin{equation*}
  	C[{o(\epsilon)}+\tau(\epsilon)\delta]\leqslant\frac{k}{2},
  	\end{equation*}
  	Next, take $ \lambda'_1>0$ large enough such that
  	\begin{equation*}
  	C\frac{\tau(\epsilon)}{\delta}<(\lambda_1-\lambda_0)\frac{k}{2}.
  	\end{equation*}
  	From this, for $ \lambda\geqslant\lambda'_1 $ it follows  that
  	\begin{equation*}
  	\||T_\lambda w_1-T_\lambda w_2 \||_{W^{2,p}}\leqslant k \||w_1-w_2 \||_{W^{2,p}},
  	\end{equation*}
  	i.e., $ T_\lambda $ is a contraction, hence the result holds as desired.
  \end{proof}

\subsubsection{Proof of Theorem~\ref{theorem w 2p}}\label{subsection 4.13}

Our objective is now to complete the proof of the existence result stated in Theorem~\ref{theorem w 2p}.
For this, to begin with, we give some auxiliary results related to the Maximum Principle.
\begin{Lemma}\label{lemma infty bounded}
	Let   $ \Omega\subset  \mathbb{R}^n $ be $ C^{1,1} $ domain  and $ s\in (0,1)$.  Supposing that $ h\in L^\infty(\Omega)$ and $ \lambda\geqslant\max\{\lambda_1,\lambda'_1\}$, where the positive numbers $\lambda_1$ and $\lambda'_1$ are from Lemmata~\ref{lemma existence of lambda  } and \ref{lemma existence of lambda 1/2  } respectively. Then, the problem
	\begin{equation}\label{eq:infty h}
		-\Delta u+(-\Delta)^su+\lambda u=h, \qquad \text{in } \Omega
	\end{equation}
	has a unique solution $ u\in W^{2,p}(\Omega)\cap W^{1,p}_0(\Omega) $ and
	\begin{equation*}
		\|u\|_{\infty}\leqslant c(\lambda)\  \|h\|_{\infty},
	\end{equation*}
	where $ c(\lambda)<\frac{1}{\lambda}$.
\end{Lemma}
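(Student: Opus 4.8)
The plan is to combine the existence theory already in place with a comparison against an explicit supersolution; the supersolution is what yields the strict inequality $c(\lambda)<\frac1\lambda$ (equivalently $\lambda\,c(\lambda)<1$, which is the feature exploited afterwards). \emph{Step 1: existence, uniqueness, admissibility.} Since $\Omega$ is bounded, $h\in L^\infty(\Omega)\subset L^p(\Omega)$ for every $p\in(1,+\infty)$; hence, for $\lambda\geqslant\max\{\lambda_1,\lambda_1'\}$, Lemma~\ref{lemma existence of lambda  } (if $s\in(0,\frac12)$) or Lemma~\ref{lemma existence of lambda 1/2  } (if $s=\frac12$) provides, for each such $p$, a unique $u\in W^{2,p}(\Omega)\cap W^{1,p}_0(\Omega)$ solving \eqref{eq:infty h}. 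By uniqueness these coincide, so one function $u$ belongs to $W^{2,p}(\Omega)\cap W^{1,p}_0(\Omega)$ for every $p\in(1,+\infty)$; in particular $u\in H^1_0(\Omega)$, so its extension by $0$ lies in $H^1(\mathbb{R}^n)\subset H^s(\mathbb{R}^n)$, whence $u\in X_0^1$ and, after an integration by parts, $u$ is the weak solution of \eqref{eq:infty h} in the sense of Definition~\ref{definition of weak solution}. This already settles the first assertion of the lemma.

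\emph{Step 2: an explicit supersolution.} I would fix a ball $B_R\supset\overline\Omega$ and a cut-off function $\eta\in C^\infty_c(\mathbb{R}^n)$ with $0\leqslant\eta\leqslant1$, $\operatorname{supp}\eta\subset B_R$ and $\eta\equiv1$ in a neighbourhood of $\overline\Omega$, and set
\begin{equation*}
\kappa:=c_{n,s}\,\inf_{x\in\Omega}\int_{\mathbb{R}^n\backslash B_R}\frac{dy}{|x-y|^{n+2s}}>0,\qquad \psi:=a\,\eta,\qquad a:=\frac{\|h\|_\infty}{\lambda+\kappa},
\end{equation*}
the positivity of $\kappa$ being immediate from the boundedness of $\Omega$. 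For $x\in\Omega$ one has $\eta\equiv1$ near $x$, so $-\Delta\psi(x)=0$, and, $1-\eta$ vanishing near $x$ (so that no principal value is needed),
\begin{equation*}
(-\Delta)^s\psi(x)=c_{n,s}\,a\int_{\mathbb{R}^n}\frac{1-\eta(y)}{|x-y|^{n+2s}}\,dy\ \geqslant\ c_{n,s}\,a\int_{\mathbb{R}^n\backslash B_R}\frac{dy}{|x-y|^{n+2s}}\ \geqslant\ \kappa\,a .
\end{equation*}
Therefore $\mathcal{L}\psi+\lambda\psi=(-\Delta)^s\psi+\lambda\psi\geqslant(\kappa+\lambda)a=\|h\|_\infty\geqslant h$ in $\Omega$, while $\psi\geqslant0=u$ in $\mathbb{R}^n\backslash\Omega$; symmetrically, $-\psi$ is a subsolution lying below $u$ outside $\Omega$. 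Note that $\psi\in X^1$ (it is smooth and compactly supported) although $\psi\notin X_0^1$.

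\emph{Step 3: comparison and conclusion.} Applying the Maximum Principle of \cite[Theorem~1.2]{MR4387204} to $\psi-u$ (which satisfies $\mathcal{L}(\psi-u)+\lambda(\psi-u)\geqslant0$ in $\Omega$, $\psi-u\geqslant0$ in $\mathbb{R}^n\backslash\Omega$, with $\lambda\geqslant0$) gives $u\leqslant\psi$, and applying it to $u+\psi$ gives $u\geqslant-\psi$; equivalently, $u\leqslant\psi$ follows by testing the weak formulations of $u$ and of $\psi$ against $(u-\psi)^+\in X_0^1$ and using $\nabla w\cdot\nabla w^+=|\nabla w^+|^2$, the monotonicity inequality $(w(x)-w(y))(w^+(x)-w^+(y))\geqslant0$ and $w\,w^+=(w^+)^2$ with $w=u-\psi$. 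Hence $|u|\leqslant\psi$ a.e., and since $\eta\equiv1$ on $\Omega$,
\begin{equation*}
\|u\|_\infty\ \leqslant\ \|\psi\|_{L^\infty(\Omega)}\ =\ a\ =\ \frac{\|h\|_\infty}{\lambda+\kappa}\ =:\ c(\lambda)\,\|h\|_\infty,\qquad c(\lambda)=\frac1{\lambda+\kappa}<\frac1\lambda .
\end{equation*}

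The point requiring care is to produce a supersolution valid for the whole range $s\in\left(0,\frac12\right]$: the naive candidate $a\,\chi_\Omega$ belongs to $X_0^1$ only for $s<\frac12$ — since $\chi_\Omega\in H^s(\mathbb{R}^n)$ precisely when $s<\frac12$ — which forces the mollified choice $a\,\eta\in X^1\setminus X_0^1$. One must therefore use the Maximum Principle in the form admitting competitors that are merely nonnegative (rather than zero) outside $\Omega$, or, along the testing route, keep the comparison one-sided so that membership of $\psi$ in $X_0^1$ is not needed. Everything else — the uniform positivity of $\kappa$, the pointwise evaluation of $\mathcal{L}\psi$ near the transition layer, the truncation identities — is routine.
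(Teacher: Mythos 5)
Your proof is correct, but it reaches the key estimate by a genuinely different route than the paper. The paper's argument is a two-stage barrier construction: first it builds (Lemma~\ref{lemma existence of w}) an exponential bump $w$ with $-\Delta w+(-\Delta)^sw\geqslant 1$ in $\Omega$, then it applies the concave transform $\frac{1}{\lambda}(1-e^{-\lambda w})$ to absorb the zeroth-order term and produce a supersolution of the $\lambda$-problem with right-hand side $1$ whose sup is strictly below $\frac1\lambda$ (Lemma~\ref{lemma: infty norm <lambda}); finally it normalizes $u/\|h\|_\infty$ and compares with $w_\lambda$ via the maximum principle of \cite[Theorem 1.2]{MR4387204}, so that $c(\lambda)=\|w_\lambda\|_\infty$. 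You bypass both auxiliary lemmas: your barrier is just a constant times a cut-off equal to $1$ near $\overline\Omega$, for which $-\Delta\psi=0$ in $\Omega$ while the fractional term contributes a uniformly positive tail $\kappa>0$, giving the explicit constant $c(\lambda)=\frac{1}{\lambda+\kappa}$. This is shorter, makes the strictness quantitative (the gap $\kappa$ depends only on $n,s,\Omega,R$ and not on $\lambda$), and your remark about the exterior datum being merely nonnegative, together with the alternative one-sided testing argument with $(u-\psi)^+$, actually treats the $\lambda$-term more carefully than the paper does when it invokes the maximum principle for $\mathcal{L}+\lambda$. The trade-off is that your mechanism is intrinsically nonlocal (it would give nothing for the pure Laplacian, where the paper's exponential-plus-concave-transform scheme, in the classical spirit of \cite{bensoussan1984impulse}, still works), and, like the paper, your existence step is genuinely restricted to $s\in\left(0,\frac12\right]$ through Lemmata~\ref{lemma existence of lambda  } and~\ref{lemma existence of lambda 1/2  }, even though the statement is phrased for $s\in(0,1)$; your barrier itself, like the paper's, is valid for all $s\in(0,1)$.
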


Before that, we point out that
by combining Lemma~\ref{lemma existence of lambda  } with the Sobolev embedding theorem,  we infer that $ u\in C^0(\overline{\Omega})\subset L^\infty(\Omega)$. In order to complete the proof of Lemma~\ref{lemma infty bounded}, it remains to prove the next two Lemmata~\ref{lemma existence of w} and~\ref{lemma: infty norm <lambda}.

\begin{Lemma}\label{lemma existence of w}
	 There exists $ w\in C^\infty(\overline{\Omega})\cap W^{1,\infty}(\mathbb{R}^n) $  such that for every $ s\in(0,1) $
	\begin{equation*}
	-\Delta w+(-\Delta)^sw\geqslant 1,\quad \text{ in }\Omega,\quad w>0 \text{ in } \overline{\Omega}, w\geqslant0 \text{ on } \mathbb{R}^n.
	\end{equation*}
\end{Lemma}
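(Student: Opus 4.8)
The plan is to exhibit an explicit barrier: the positive part of a concave paraboloid, supported on a ball that comfortably contains $\overline\Omega$. After a translation (both $-\Delta$ and $(-\Delta)^s$ are translation invariant) we may assume $0\in\Omega$; fix $\rho:=1+\sup_{x\in\overline\Omega}|x|$, so that $\overline\Omega\subset B_\rho$ with a definite gap, and set $R:=2\rho$. The candidate is
\[
w(x):=\left(\frac{R^2-|x|^2}{2n}\right)_{+}=\begin{cases}\dfrac{R^2-|x|^2}{2n}, & |x|\leqslant R,\\ 0, & |x|>R.\end{cases}
\]
On the open ball $B_R\supset\supset\overline\Omega$ this is a polynomial, hence $w\in C^\infty(\overline\Omega)$; moreover $0\leqslant w\leqslant R^2/(2n)$ and $|\nabla w|\leqslant R/n$ on $B_R$ while $\nabla w=0$ outside $\overline{B_R}$, so $w$ is bounded and globally Lipschitz, i.e.\ $w\in W^{1,\infty}(\mathbb{R}^n)$; finally $w\geqslant0$ on $\mathbb{R}^n$, and since $|x|\leqslant\rho=R/2$ on $\overline\Omega$ one has $w(x)\geqslant\tfrac{3R^2}{8n}>0$ there. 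A direct computation gives $-\Delta w(x)=-\tfrac1{2n}\Delta|x|^2=1$ for every $x\in\Omega\subset B_R$.

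It then remains to check that $(-\Delta)^s w(x)\geqslant0$ for every $x\in\Omega$ and every $s\in(0,1)$; combined with $-\Delta w\equiv1$ on $\Omega$ this gives the claim. Since $w$ is smooth in a neighbourhood of any $x\in\Omega$ (so the numerator below is $O(|z|^2)$ as $z\to0$) and bounded on $\mathbb{R}^n$, the fractional Laplacian at $x$ is the absolutely convergent symmetric integral
\[
(-\Delta)^s w(x)=\frac{c_{n,s}}{2}\int_{\mathbb{R}^n}\frac{2w(x)-w(x+z)-w(x-z)}{|z|^{n+2s}}\,dz ,
\]
so it suffices to show $2w(x)\geqslant w(x+z)+w(x-z)$ for all $z\in\mathbb{R}^n$. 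If both $x+z$ and $x-z$ lie in $\overline{B_R}$, this is the midpoint concavity inequality for $t\mapsto(R^2-|t|^2)/(2n)$, whose Hessian is $-I/n\leqslant0$. If at least one of the two points, say $x+z$, lies outside $\overline{B_R}$, then $w(x+z)=0$, while $w(x-z)\leqslant\max_{\mathbb{R}^n}w=R^2/(2n)$ and $w(x)\geqslant\tfrac{3R^2}{8n}$, hence $2w(x)-w(x+z)-w(x-z)\geqslant\tfrac{3R^2}{4n}-\tfrac{R^2}{2n}=\tfrac{R^2}{4n}>0$ (and it is trivially $\geqslant 2w(x)\geqslant0$ if both $x\pm z$ are outside $\overline{B_R}$).

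The one delicate point is precisely this last case distinction, and the choice $R=2\rho$ is what makes it work: it forces the strict separation $w(x)>R^2/(2n)\geqslant w(y)$ whenever $x\in\Omega$ and $y\in\mathbb{R}^n\setminus B_R$, which is exactly what allows one to absorb the possibly unfavourable sign of $w(x-z)-w(x)$ coming from interior reflected points. No information on $\partial\Omega$ enters the argument, so the same $w$ serves all $s\in(0,1)$ at once; and because $w$ has bounded support and constant Hessian near $\overline\Omega$, the function $(-\Delta)^s w$ is in fact bounded and continuous on $\overline\Omega$, which is the form in which this Lemma will be used in Lemma~\ref{lemma infty bounded}.
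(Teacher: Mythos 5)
Your proof is correct, but it follows a genuinely different route from the paper's. The paper encloses $\overline\Omega$ in an annulus $B_{3R/4}(x_0)\setminus B_{R/4}(x_0)$ and uses the exponential barrier $w=1-e^{\beta(|x|^2-R^2)}$ truncated outside $B_R$: there $-\Delta w=e^{\beta(|x|^2-R^2)}(2n\beta+4\beta^2|x|^2)$, the fractional term is only bounded from below by $-C(s)\,e^{\beta(|x|^2-R^2)}$ (concavity on $\overline{B_R}$ disposes of $|z|\leqslant 1$, a one-sided estimate of $|z|>1$ gives the constant $C(s)$), and one chooses $\beta$ large so that the sum is at least some $\alpha_0>0$, normalizing afterwards to reach $1$. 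You instead take the truncated concave paraboloid $w=(R^2-|x|^2)_+/(2n)$ with $\overline\Omega$ well inside $B_{R/2}$, so that $-\Delta w\equiv 1$ exactly on $\Omega$, and you prove the pointwise inequality $2w(x)\geqslant w(x+z)+w(x-z)$ for $x\in\Omega$ and all $z$ (midpoint concavity when both reflected points stay in $\overline{B_R}$, and the quantitative gap $2w(x)\geqslant 3R^2/(4n)>R^2/(2n)=\max w$ when one of them exits), whence $(-\Delta)^s w\geqslant 0$ on $\Omega$ directly from the positivity of the kernel; your side checks (global Lipschitz bound across $\partial B_R$, smoothness of $w$ near $\overline\Omega$, validity of the symmetric second-difference representation) are all in order. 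What your construction buys is that it is explicit and parameter-free, and manifestly uniform in $s$: no $s$-dependent constant to absorb, no large $\beta$ to tune, no final renormalization, and no need to keep $\Omega$ away from the center of the ball. What the paper's exponential ansatz buys is flexibility: it is the standard barrier template that still functions when one cannot make $-\Delta w$ exactly constant, or when strict positivity with room to spare is needed against more general perturbations, at the cost of tracking $C(s)$ and the large parameter $\beta$.
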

\begin{proof}
Since $ \Omega $ is bounded in $ \mathbb{R}^n $, there exist $x_0\in \mathbb{R}^n$ and a positive constant $R$ such that
$ \overline{\Omega}\subset B_{\frac{3}{4}R}(x_0)\backslash B_{\frac{1}{4}R}(x_0)$ where $B_R(x_0):=\{x\in\mathbb{R}^n : |x-x_0|<R\} $. Without loss of generality, one could assume  $x_0=0$ by the translation-invariance of the mixed operator $\mathcal{L}$ and denote $B_R= B_R(0)$.

Let
 \begin{equation}\label{eq:function LW > 1}
 w(x):=\begin{cases}
 1-e^{\beta(|x|^2-R^2)},  \qquad &|x|\leqslant R,\\
 0&|x|>R,
 		\end{cases}
 \end{equation}
 where $ \beta>0 $ will be determined below.

One can calculate that
 \begin{equation*}
 	-\Delta w(x)=e^{\beta(|x|^2-R^2)}(2n\beta+4\beta^2|x|^2).
 \end{equation*}
Also, $ w $ is concave on $ \overline{B_R} $, and for every $ x\in\Omega$, if $ |z|\leqslant 1$, then $ x\pm z\in B_R$.

Therefore,
 \begin{align*}
 	(-\Delta)^sw(x)&=-\frac{1}{2}\int_{\{|z|\leqslant 1\}\cup\{ |z|> 1\}}\frac{w(x+z)+w(x-z)-2w(x)}{|z|^{n+2s}}\, dz\\
 	&\geqslant-\frac{1}{2}\int_{\{|z|> 1\}}\frac{w(x+z)+w(x-z)-2w(x)}{|z|^{n+2s}}\, dz\\
 	&=-\int_{\{|z|> 1\}}\frac{w(x+z)-w(x)}{|z|^{n+2s}}\, dz\\
 	&=-\int_{\{|z|> 1\}\cap\{|x+z|< |x|\}} \frac{w(x+z)-w(x)}{|z|^{n+2s}}\, dz-\int_{\{|z|> 1\}\cap\{|x+z|\geqslant |x|\}} \frac{w(x+z)-w(x)}{|z|^{n+2s}}\, dz\\
 	&\geqslant-\int_{\{|z|> 1\}\cap\{|x+z|< |x|\}} \frac{w(x+z)-w(x)}{|z|^{n+2s}}\, dz\\
 	&\geqslant -C(s) e^{\beta(|x|^2-R^2)}.
 \end{align*}
 As a consequence of this, we get
 \begin{align*}
 	-\Delta w+(-\Delta)^sw&\geqslant e^{\beta(|x|^2-R^2)} \left(2n\beta+4\beta^2|x|^2 - C(s) \right)\\
 	&\geqslant e^{-\frac{15\beta R^2}{16}} \left(2n\beta+\frac{\beta^2R^2}{4}-C(s) \right)
 	\geqslant \alpha_0 ,
 \end{align*}
 for some $ \alpha_0>0 $ if $ \beta$ is large enough, which can immediately imply Lemma~\ref{lemma existence of w}.
\end{proof}

\begin{Lemma}\label{lemma: infty norm <lambda}
	The problem
	\begin{equation*}
	-\Delta u_\lambda+(-\Delta)^su_\lambda+\lambda u_\lambda=1, \qquad \text{in } \Omega
	\end{equation*}
	has a unique solution $ w_\lambda\in W^{2,p}(\Omega)\cap L^\infty(\Omega) $
	 and $ \|w_\lambda\|_{\infty}<\frac{1}{\lambda} $.
\end{Lemma}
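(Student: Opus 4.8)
The plan is to deduce existence and regularity from the results of Subsection~\ref{section 4.12}, and then to obtain the sharp bound by comparing $w_\lambda$ with the constant function $\frac{1}{\lambda}$. First I would fix the framework: since $\Omega$ is bounded, the constant $1$ lies in $L^p(\Omega)$ for every $p\in(1,\infty)$, so for $\lambda\geqslant\max\{\lambda_1,\lambda'_1\}$ Lemmata~\ref{lemma existence of lambda  } and~\ref{lemma existence of lambda 1/2  } provide a unique solution $w_\lambda\in W^{2,p}(\Omega)\cap W^{1,p}_0(\Omega)$; choosing $p>n$ and using the Sobolev embedding $W^{2,p}(\Omega)\hookrightarrow C^1(\overline\Omega)$ gives $w_\lambda\in C^1(\overline\Omega)\subset L^\infty(\Omega)$, and $w_\lambda\in X_0^1$ (its $X_0^1$-norm is controlled by its $H^1_0(\Omega)$-norm, as recalled in Section~\ref{sec:preliminary}). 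Testing the weak formulation with the admissible function $w_\lambda^-\in X_0^1$, or simply invoking the Maximum Principle of \cite[Theorem 1.2]{MR4387204} (note that $\mathcal{L}w_\lambda+\lambda w_\lambda=1\geqslant0$ in $\Omega$ and $w_\lambda=0$ in $\mathbb{R}^n\setminus\Omega$), yields $w_\lambda\geqslant0$, so that $\|w_\lambda\|_\infty=\sup_{\overline\Omega}w_\lambda$.

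Next I would prove the inequality $w_\lambda\leqslant\frac{1}{\lambda}$, which is the heart of the matter. The observation is that the constant $\frac{1}{\lambda}$ is a supersolution: since $-\Delta$ and $(-\Delta)^s$ annihilate constants, $-\Delta\big(\frac{1}{\lambda}\big)+(-\Delta)^s\big(\frac{1}{\lambda}\big)+\lambda\cdot\frac{1}{\lambda}=1$ in $\Omega$, while $\frac{1}{\lambda}>0=w_\lambda$ in $\mathbb{R}^n\setminus\Omega$. If the Maximum Principle is applied only to comparison functions that vanish outside $\Omega$, I would first perturb by the barrier $w$ of Lemma~\ref{lemma existence of w}: for $\varepsilon>0$ the function $\frac{1}{\lambda}+\varepsilon w$ satisfies $\mathcal{L}\big(\frac{1}{\lambda}+\varepsilon w\big)+\lambda\big(\frac{1}{\lambda}+\varepsilon w\big)\geqslant 1+\varepsilon$ in $\Omega$ and is $\geqslant\frac{1}{\lambda}$ outside $\Omega$, so a comparison argument gives $w_\lambda\leqslant\frac{1}{\lambda}+\varepsilon w$ on $\overline\Omega$, and letting $\varepsilon\to0$ yields $w_\lambda\leqslant\frac{1}{\lambda}$. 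A self-contained alternative is to test the weak formulation of $\mathcal{L}w_\lambda+\lambda w_\lambda=1$ with $\varphi:=\big(w_\lambda-\frac{1}{\lambda}\big)^+\in X_0^1$ (which vanishes outside $\Omega$): the Gagliardo term is nonnegative because $t\mapsto\big(t-\frac{1}{\lambda}\big)^+$ is nondecreasing, the local term equals $\|\nabla\varphi\|_2^2$ (this is where the extra nonnegative contribution of the Laplacian reappears), and on $\{\varphi>0\}$ one has $w_\lambda>\frac{1}{\lambda}$, so $\lambda\int_\Omega w_\lambda\varphi\geqslant\int_\Omega\varphi$; the resulting identity forces $\|\nabla\varphi\|_2^2\leqslant0$, hence $\varphi\equiv0$ and $w_\lambda\leqslant\frac{1}{\lambda}$ a.e., hence everywhere by continuity.

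Finally I would upgrade this to the strict bound. On $\partial\Omega$ we have $w_\lambda=0<\frac{1}{\lambda}$. If $w_\lambda(x^\star)=\frac{1}{\lambda}$ at some $x^\star\in\Omega$, then $x^\star$ is an interior maximum of $w_\lambda$, so $-\Delta w_\lambda(x^\star)\geqslant0$; moreover, since $w_\lambda\not\equiv\frac{1}{\lambda}$ on $\mathbb{R}^n$ (indeed $w_\lambda=0$ on $\mathbb{R}^n\setminus\Omega$), one has $(-\Delta)^s w_\lambda(x^\star)=c_{n,s}\,\mathrm{PV}\!\int_{\mathbb{R}^n}\frac{\frac{1}{\lambda}-w_\lambda(y)}{|x^\star-y|^{n+2s}}\,dy>0$, and adding $\lambda w_\lambda(x^\star)=1$ would give $\mathcal{L}w_\lambda(x^\star)+\lambda w_\lambda(x^\star)>1$, contradicting the equation; hence $w_\lambda<\frac{1}{\lambda}$ on the compact set $\overline\Omega$, so $\|w_\lambda\|_\infty<\frac{1}{\lambda}$. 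The main obstacle I anticipate is precisely this last step: the pointwise evaluation of $(-\Delta)^s w_\lambda$ at the would-be maximum requires enough regularity of $w_\lambda$ near $x^\star$ — available here because $w_\lambda\in W^{2,p}(\Omega)$ with $p>n$ gives $w_\lambda\in C^{1,\gamma}_{\mathrm{loc}}$, whence $(-\Delta)^s w_\lambda$ is Hölder continuous and a Schauder bootstrap on $-\Delta w_\lambda=1-\lambda w_\lambda-(-\Delta)^s w_\lambda$ puts $w_\lambda\in C^2_{\mathrm{loc}}(\Omega)$ — or else one bypasses it by invoking the strong maximum principle from \cite{MR4387204}; a secondary subtlety is to use the Maximum Principle in a form compatible with a comparison function (the constant $\frac{1}{\lambda}$) that does not vanish outside $\Omega$, which is exactly what the perturbation by $w$, or the energy argument, circumvents.
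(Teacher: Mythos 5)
Your argument is correct, but it takes a genuinely different route from the paper's. The paper never compares $w_\lambda$ with the constant $\tfrac{1}{\lambda}$: it applies the exponential change of variable $\varphi(w)=\tfrac{1}{\lambda}(1-e^{-\lambda w})$ to the barrier $w$ of Lemma~\ref{lemma existence of w}; the concavity of $\varphi$ gives $(-\Delta)^s\varphi(w)\geqslant e^{-\lambda w}(-\Delta)^s w$, hence $-\Delta\varphi(w)+(-\Delta)^s\varphi(w)+\lambda\varphi(w)\geqslant 1$, and a single application of the weak maximum principle of \cite[Theorem 1.2]{MR4387204} yields $w_\lambda\leqslant\varphi(w)\leqslant\max_{\overline\Omega}\varphi(w)<\tfrac{1}{\lambda}$, so strictness comes out in one stroke, with no pointwise evaluation of the operators and no interior bootstrap. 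You instead obtain the non-strict bound $0\leqslant w_\lambda\leqslant\tfrac{1}{\lambda}$ by the energy argument with the test function $(w_\lambda-\tfrac{1}{\lambda})^+$ (which is complete, does not need the barrier of Lemma~\ref{lemma existence of w}, and only uses test functions vanishing outside $\Omega$), and then upgrade to the strict bound by a contradiction at an interior maximum $x^\star$, exploiting the strictly positive nonlocal tail $(-\Delta)^s w_\lambda(x^\star)>0$ produced by $w_\lambda\equiv 0$ outside $\Omega$; this is legitimate, but it costs you the interior Schauder bootstrap to $C^{2}_{\mathrm{loc}}(\Omega)$ (fine for $s\leqslant\tfrac12$, where $w_\lambda\in C^{1,\gamma}(\overline\Omega)$ makes $(-\Delta)^s w_\lambda$ locally H\"older continuous), which the paper's strict supersolution avoids entirely. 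In short: the paper buys strictness cheaply from an explicit supersolution built on the barrier lemma; you buy independence from that lemma for the $L^\infty$ bound, at the price of extra interior regularity work for the strict inequality.

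Two minor remarks. First, the perturbation $\tfrac{1}{\lambda}+\varepsilon w$ does not resolve the concern you attach to it: the comparison function $\tfrac{1}{\lambda}+\varepsilon w-w_\lambda$ still does not vanish outside $\Omega$ (it equals $\tfrac{1}{\lambda}+\varepsilon w>0$ there). Either one uses the maximum principle in the form requiring only nonnegative exterior data --- which is exactly how the paper uses it, and then the constant $\tfrac{1}{\lambda}$ alone suffices --- or one relies on your energy argument, which does the job on its own; so keep the energy route as the actual proof. Second, both you and the paper apply the maximum principle to the operator augmented by the zeroth-order term $+\lambda$; your tests with $w_\lambda^-$ and $(w_\lambda-\tfrac{1}{\lambda})^+$ in fact give a self-contained justification of that point.
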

\begin{proof}
The existence and uniqueness of~$w_\lambda$ are guaranteed by Lemmata~\ref{lemma existence of lambda  } and \ref{lemma existence of lambda 1/2  }.

Then, the Sobolev embedding theorem implies  that  $ w_\lambda\in C^0(\overline{\Omega})\subset L^\infty(\Omega) $. Also, by  Lemma~\ref{lemma existence of w}, it follows that there exists
 $ w\in C^2(\overline{\Omega})\cap W^{1,\infty}(\mathbb{R}^n) $  such that
 \begin{equation*}
 -\Delta w+(-\Delta)^sw\geqslant 1,\quad \text{ on } \Omega, \quad w>0 \text{ in } \overline{\Omega}, \quad w\geqslant0 \text{ on } \mathbb{R}^n.
 \end{equation*}
We set $ \varphi(w)=\frac{1}{\lambda}(1-e^{-\lambda w}) $, and note that $ \varphi $ is concave for all $ w\geqslant 0 $. Thus, for $ x,y\in \mathbb{R}^n $ ,
\begin{equation*}
	\varphi(w(x))-\varphi(w(y))\geqslant \varphi'(w(x))(w(x)-w(y)).
\end{equation*}
 Computing $ (-\Delta)^s\varphi(w(x)) $ for $ x\in \Omega $
 \begin{align*}
 	(-\Delta)^s\varphi(w(x))&\geqslant\int_{\mathbb{R}^n}\frac{\varphi'(w(x))(w(x)-w(y))}{|x-y|^{n+2s}}\, dy\\
 	&=e^{-\lambda w}(-\Delta)^sw(x).
 \end{align*}
 As a consequence,
 \begin{equation}
 \begin{split}
 	&-\Delta\varphi(w(x))+(-\Delta)^s\varphi(w(x))+\lambda\varphi(w(x))\\
	\geqslant& e^{-\lambda w} \left(-\Delta w+\lambda \sum_{i=1}^{n}(D_iw)^2+(-\Delta)^s w\right)+1-e^{-\lambda w}
 	\geqslant 1.\label{eq: >1}
	\end{split}
 \end{equation}
 Combining with \eqref{eq: >1}, we get
 \begin{equation*}
 		-\Delta (\varphi(w)-w_\lambda)+(-\Delta)^s(\varphi(w)-w_\lambda)+\lambda (\varphi(w)-w_\lambda)\geqslant 0, \qquad \text{in } \Omega.
 \end{equation*}
 The fact $ \varphi(w),w_\lambda\in C^0(\overline{\Omega})\cap W^{2,p}(\Omega) $ with $ p>n $ implies that $ \varphi(w),w_\lambda\in H^1(\mathbb{R}^n) $.

 Applying the { Maximum Principle (see \cite[Theorem 1.2]{MR4387204})}, we obtain
 \begin{equation*}
 \varphi(w(x))\geqslant w_\lambda(x), \quad \text{ a.e. on  } \Omega,
 \end{equation*}
 since $ \varphi(w(x))- w_\lambda(x)\geqslant 0 $ a.e. in $ \mathbb{R}^n\backslash\Omega $.

 Hence, due to the fact that $ \max\limits_{\overline\Omega}\varphi(w(x))<\frac{1}{\lambda} $,
we conclude that $ \|w_\lambda\|_{\infty}<\frac{1}{\lambda} $ on $ {\Omega} $, as desired.
\end{proof}

 \begin{proof}[Proof of  Lemma~\ref{lemma infty bounded}] \label{lemma infty bounded PAGE}
 Recalling \eqref{eq:infty h}, we get
 \begin{equation*}
 	-\Delta \left(\frac{u}{\|h\|_{\infty}}\right)+(-\Delta)^s\left(\frac{u}{\|h\|_{\infty}}\right)+\lambda \left(\frac{u}{\|h\|_{\infty}}\right)\leqslant 1, \qquad \text{in } \Omega,
 \end{equation*}
 	combining with the Lemma~\ref{lemma: infty norm <lambda}, we see that
 	\begin{equation*}
 			-\Delta \left(w_\lambda-\frac{u}{\|h\|_{\infty}}\right)+(-\Delta)^s\left(w_\lambda-\frac{u}{\|h\|_{\infty}}\right)+\lambda \left(w_\lambda-\frac{u}{\|h\|_{\infty}}\right)\geqslant 0, \qquad \text{in } \Omega.
 	\end{equation*}
 	 The { Maximum Principle} implies that
 	 \begin{equation*}
 	 	\|u\|_{\infty}\leqslant w_\lambda\  \|h\|_{\infty}.
 	 \end{equation*}
 	The  desired result thus follows by  Lemma~\ref{lemma: infty norm <lambda}.
\end{proof}

We are finally ready to prove Theorem~\ref{theorem w 2p}.

\begin{proof}[Proof of Theorem~\ref{theorem w 2p}]
	Fixed $ \lambda\geqslant\max\{\lambda_1,\lambda'_1\}$ such that we can solve the equation \eqref{eq: lambda equation} or \eqref{eq: lambda equation 1/2} with $ f\in L^p(\Omega)$. We consider the sequence of functions defined in the following way.
	\begin{equation*}
			-\Delta u_0+(-\Delta)^su_0+\lambda u_0=f,\quad u_0\in  W^{2,p}(\Omega)\cap W^{1,p}_0(\Omega) ,
	\end{equation*}
	and  as $ u_k $ is defined in $ W^{2,p}(\Omega)\cap W^{1,p}_0(\Omega) $, $ u_{k+1} $ is the solution of
	\begin{equation}\label{eq:iteration main equation}
		-\Delta u_{k+1}+(-\Delta)^su_{k+1}+\lambda u_{k+1}=f+\lambda u_k,\quad u_{k+1}\in  W^{2,p}(\Omega)\cap W^{1,p}_0(\Omega).
	\end{equation}
	We can consider that $ u_{k+1}-u_{k}  $ is the solution of
	\begin{equation}\label{eq:interation equation}
		-\Delta (u_{k+1}-u_{k})+(-\Delta)^s(u_{k+1}-u_{k})+\lambda (u_{k+1}-u_{k})=\lambda(u_{k}-u_{k-1}).
	\end{equation}
	Now, let
	\begin{equation*}
		j=\left[\frac{n}{2p}\right] ,\quad \frac{1}{p_i}=\frac{1}{p}-\frac{2i}{n}.
	\end{equation*}
	Assuming that $ u_{-1}=0, $ $ u_{k+1}-u_{k} $ is the solution of equation \eqref{eq: lambda equation} or \eqref{eq: lambda equation 1/2} with $ f=\lambda(u_{k}-u_{k-1}) $. By the preceding we can deduce other regularity properties of $ u_{k+1}-u_{k} $.
	
	Indeed, since $ u_0\in W^{2,p}(\Omega) $, we also have $ u_0\in L^{p_1}(\Omega). $ with $ \frac{1}{p_1}=\frac{1}{p}-\frac{2}{n} $. Hence,
	\begin{equation*}
		u_1-u_0\in W^{2,p}(\Omega)\cap W^{2,p_1}(\Omega), \quad \frac{1}{p_1}=\frac{1}{p}-\frac{2}{n}.
	\end{equation*}
	Then $ u_1-u_0\in L^{p_2}(\Omega). $ Hence
	\begin{equation*}
	u_2-u_1\in W^{2,p}(\Omega)\cap W^{2,p_2}(\Omega), \quad \frac{1}{p_2}=\frac{1}{p}-\frac{4}{n}.
	\end{equation*}
	
	By induction we prove
	\begin{equation*}
		u_{k}-u_{k-1}\in W^{2,p}(\Omega)\cap W^{2,p_k}(\Omega),\quad \frac{1}{p_k}=\frac{1}{p}-\frac{2k}{n},
	\end{equation*}
	as long as $ k<\frac{n}{2p}$.
	\begin{itemize}
		\item[Case 1.] If $ j=\left[\frac{n}{2p}\right]=0 $, then $ 0<\frac{n}{2p}<1 $. Hence $ u_0\in C^0(\overline\Omega)\subset L^\infty(\Omega). $
		\item[Case 2.] If $ j<\frac{n}{2p}<j+1 $, then $ u_{j}-u_{j-1}\in  W^{2,p}(\Omega)\cap W^{2,p_j}(\Omega)  $ with $ p_j>\frac{n}{2} $. Hence $ u_{j}-u_{j-1}\in C^0(\overline\Omega)\subset L^\infty(\Omega) $.
		\item[Case 3.] If $ j=\frac{n}{2p}\geqslant 1 $, then $ u_{j-1}-u_{j-2}\in W^{2,p}(\Omega)\cap W^{2,\frac{n}{2}}(\Omega)\subset W^{1,\frac{n}{2}}(\Omega)  $. Hence $ u_{j-1}-u_{j-2}\in L^n(\Omega)$. Therefore, $ u_{j}-u_{j-1}\in W^{2,p}(\Omega)\cap W^{2,n}(\Omega) $. Due to the interpolation inequality, we know $ u_{j}-u_{j-1}\in W^{2,\theta p+(1-\theta)n}(\Omega) $ with $ \theta\in [0,1].$ Let
		\begin{equation*}
			q:=\theta p+(1-\theta)n=n(1-\theta(1-\frac{1}{2j})).
		\end{equation*}
		In particular, we observe that
		\begin{equation*}
			2-\frac{n}{q}=2-\frac{1}{1-\theta(1-\frac{1}{2j})}=\frac{1-2\theta(1-\frac{1}{2j})}{1-\theta(1-\frac{1}{2j})}=\alpha,
		\end{equation*}
		and we can choose $ \theta>0$ small enough such that $ \alpha\in(0,1) $. From this, it follows that $ u_{j}-u_{j-1}\in C^{0,\alpha}(\overline\Omega) \subset L^\infty(\Omega). $
	\end{itemize}
	Thus in all cases, Lemma~\ref{lemma infty bounded} and \eqref{eq:interation equation} can imply that, for all~$ k\geqslant j$, $u_{k}-u_{k-1}\in L^\infty(\Omega).  $ Moreover,
	\begin{equation*}
		\|u_{k+1}-u_{k}\|_{\infty}\leqslant K \|u_{k}-u_{k-1}\|_{\infty},
	\end{equation*}
	where $ 0<K<1 $. It is then immediate to see that $ \{u_k\} $ is a Cauchy sequence in $ L^\infty(\Omega) $ and is bounded. Recalling \eqref{eq:iteration main equation},  utilizing \eqref{eq:lapalce estimate 1}  and   \eqref{eq:lapalce estimate 2}, we see that
\begin{align*}
 &\|u_{k+1}\|_{W^{2,p}(\Omega)}+(\lambda-\lambda_0)\|u_{k+1}\|_{p}\\
 \leqslant &\, 2C\bigg(\epsilon\|u_{k+1}\|_{W^{2,p}(\Omega)}+\tau(\epsilon)\|u_{k+1}\|_{p}+\|f\|_{p}+\lambda \|u_k\|_{p}\bigg)\\
 \leqslant &\,k\bigg(\|u_{k+1}\|_{W^{2,p}(\Omega)}+(\lambda-\lambda_0)\|u_{k+1}\|_{p}\bigg)+C \|f\|_{p}+C\lambda \|u_k\|_{\infty},
\end{align*}
 where the constant $ C $ is independent of $ \lambda $ and $u_k$. It follows that $ \{u_k \} $ is bounded in $ W^{2,p}(\Omega)\cap W^{1,p}_0(\Omega)$. Thus there exists a subsequence, which we relabel as $ \{u_k\} $, converging weakly to a function $ u\in W^{2,p}(\Omega)\cap W^{1,p}_0(\Omega)$. Since
\begin{equation*}
	\int_{\Omega} gD^\alpha u_k\rightarrow \int_{\Omega}gD^\alpha u
\end{equation*}
for all $ |\alpha|\leqslant 2 $ and $ g\in L^{p/(p-1)}(\Omega) $, we must have
\begin{equation*}
	\int_{\Omega} g(-\Delta u_k)\rightarrow \int_{\Omega}g (-\Delta u).
\end{equation*}
Similarly,
\begin{equation*}
	\int_{\Omega} g(-\Delta )^su_k\rightarrow \int_{\Omega}g (-\Delta )^su
\end{equation*}
since $ (-\Delta)^s\in \mathscr{L}(W^{2,p}(\Omega)\cap W_0^{1,p}(\Omega),L^p(\Omega)) $.
Combining the above results and \eqref{eq:iteration main equation},  we obtain
\begin{equation*}
	-\Delta u+(-\Delta)^su=f,\quad u\in  W^{2,p}(\Omega)\cap W^{1,p}_0(\Omega).
\end{equation*}
In particular, we can set (i) $ \epsilon\in\left(0,\frac{1}{2C}\right) $ in \eqref{eq: Lp estimate of fractional s} for $ s\in(0,\frac{1}{2}) $ and (ii) $\epsilon>0$ such $ {o(\epsilon)}<\frac{1}{2C} $ in \eqref{eq: Lp estimate of 1/2} for $ s=\frac{1}{2} $, one can obtain
\begin{align*}
	\|u\|_{W^{2,p}(\Omega)}\leqslant C_1\bigg(\|u\|_{p}+\|f\|_{p}\bigg),
\end{align*}
 where the constant $C_1$ is  independent of $ u$. The result holds as desired.
\end{proof}

 \subsection{Proof of $ C^{1,\alpha} $-regularity}

 Now we complete the proof of Theorem~\ref{theorem C^1 alpha}.

 \begin{proof}[Proof of Theorem~\ref{theorem C^1 alpha}]
 Let $ u\in X_0^1 $ be the weak solution of \eqref{eq:g main equation}. Theorem~\ref{th: regularity} implies that $ g(x,u)\in L^\infty(\Omega)$.

 Moreover, due to Theorem~\ref{theorem w 2p}, we  deduce that  there exists a unique solution $ v\in W^{2,p}(\Omega)\cap W^{1,p}_0(\Omega) $ for every  $ p>n $. By combining this with the Sobolev embedding inequality, we obtain $ v\in C^{1,\alpha}(\overline\Omega)\subset X_0^1 $ for any $ \alpha\in(0,1)$.

Finally, the Lax-Milgram Theorem yields that $u=v$. Hence, the proof of Theorem~\ref{theorem C^1 alpha} is completed.
  \end{proof}

\section*{Acknowledgments}
The authors would like to thank the anonymous referee for carefully reading the manuscript and the valuable comments  and  suggestions on it.

\bibliographystyle{alpha}

\bibliography{reference}

\begin{thebibliography}{BDVV22b}

\bibitem[AC21]{MR4275496}
Nicola Abatangelo and Matteo Cozzi.
\newblock An elliptic boundary value problem with fractional nonlinearity.
\newblock {\em SIAM J. Math. Anal.}, 53(3):3577--3601, 2021.

\bibitem[BCCI12]{MR2911421}
Guy Barles, Emmanuel Chasseigne, Adina Ciomaga, and Cyril Imbert.
\newblock Lipschitz regularity of solutions for mixed integro-differential
  equations.
\newblock {\em J. Differential Equations}, 252(11):6012--6060, 2012.

\bibitem[BCSS15]{BCSS15}
B.~Barrios, E.~Colorado, R.~Servadei, and F.~Soria.
\newblock A critical fractional equation with concave-convex power
  nonlinearities.
\newblock {\em Ann. Inst. H. Poincar\'{e} Anal. Non Lin\'{e}aire},
  32(4):875--900, 2015.

\bibitem[BdSdM22]{MR4400914}
S.~Buccheri, J.~V. da~Silva, and L.~H. de~Miranda.
\newblock A system of local/nonlocal {$p$}-{L}aplacians: the eigenvalue problem
  and its asymptotic limit as {$p\to\infty$}.
\newblock {\em Asymptot. Anal.}, 128(2):149--181, 2022.

\bibitem[BDVV]{FABEJK102}
Stefano Biagi, Serena Dipierro, Enrico Valdinoci, and Eugenio Vecchi.
\newblock A {F}aber-{K}rahn inequality for mixed local and nonlocal operators.
\newblock {\em J. Anal. Math.}

\bibitem[BDVV22a]{FBENIDAND12}
Stefano Biagi, Serena Dipierro, Enrico Valdinoci, and Eugenio Vecchi.
\newblock A {B}rezis-{N}irenberg type result for mixed local and nonlocal
  operators.
\newblock 2022.
\newblock Preprint.

\bibitem[BDVV22b]{MR4387204}
Stefano Biagi, Serena Dipierro, Enrico Valdinoci, and Eugenio Vecchi.
\newblock Mixed local and nonlocal elliptic operators: regularity and maximum
  principles.
\newblock {\em Comm. Partial Differential Equations}, 47(3):585--629, 2022.

\bibitem[BDVV23]{MR4391102}
Stefano Biagi, Serena Dipierro, Enrico Valdinoci, and Eugenio Vecchi.
\newblock A {H}ong-{K}rahn-{S}zeg\"{o} inequality for mixed local and nonlocal
  operators.
\newblock {\em Math. Eng.}, 5(1):Paper No. 014, 25, 2023.

\bibitem[BI08]{MR2422079}
Guy Barles and Cyril Imbert.
\newblock Second-order elliptic integro-differential equations: viscosity
  solutions' theory revisited.
\newblock {\em Ann. Inst. H. Poincar\'{e} C Anal. Non Lin\'{e}aire},
  25(3):567--585, 2008.

\bibitem[BJK10]{MR2653895}
Imran~H. Biswas, Espen~R. Jakobsen, and Kenneth~H. Karlsen.
\newblock Viscosity solutions for a system of integro-{PDE}s and connections to
  optimal switching and control of jump-diffusion processes.
\newblock {\em Appl. Math. Optim.}, 62(1):47--80, 2010.

\bibitem[BL84]{bensoussan1984impulse}
Alain Bensoussan and Jacques-Louis Lions.
\newblock {\em Impulse control and quasivariational inequalities}.
\newblock $\mu $. Gauthier-Villars, Montrouge; Heyden \& Son, Inc.,
  Philadelphia, PA, 1984.
\newblock Translated from the French by J. M. Cole.

\bibitem[BMV22]{BIMUVE}
Stefano Biagi, Dimitri Mugnai, and Eugenio Vecchi.
\newblock Global boundedness and maximum principle for a {B}rezis-{O}swald
  approach to mixed local and nonlocal operators.
\newblock 2022.
\newblock Preprint.

\bibitem[BVDV21]{MR4313576}
Stefano Biagi, Eugenio Vecchi, Serena Dipierro, and Enrico Valdinoci.
\newblock Semilinear elliptic equations involving mixed local and nonlocal
  operators.
\newblock {\em Proc. Roy. Soc. Edinburgh Sect. A}, 151(5):1611--1641, 2021.

\bibitem[CDV22]{MR4381148}
Xavier Cabr\'{e}, Serena Dipierro, and Enrico Valdinoci.
\newblock The {B}ernstein technique for integro-differential equations.
\newblock {\em Arch. Ration. Mech. Anal.}, 243(3):1597--1652, 2022.

\bibitem[CKSV12]{MR2912450}
Zhen-Qing Chen, Panki Kim, Renming Song, and Zoran Vondra\v{c}ek.
\newblock Boundary {H}arnack principle for {$\Delta+\Delta^{\alpha/2}$}.
\newblock {\em Trans. Amer. Math. Soc.}, 364(8):4169--4205, 2012.

\bibitem[DMV17]{DMVbook}
Serena Dipierro, Mar\'{\i}a Medina, and Enrico Valdinoci.
\newblock {\em Fractional elliptic problems with critical growth in the whole
  of {$\Bbb{R}^n$}}, volume~15 of {\em Appunti. Scuola Normale Superiore di
  Pisa (Nuova Serie) [Lecture Notes. Scuola Normale Superiore di Pisa (New
  Series)]}.
\newblock Edizioni della Normale, Pisa, 2017.

\bibitem[DNPV12]{DPV}
Eleonora Di~Nezza, Giampiero Palatucci, and Enrico Valdinoci.
\newblock Hitchhiker's guide to the fractional {S}obolev spaces.
\newblock {\em Bull. Sci. Math.}, 136(5):521--573, 2012.

\bibitem[DPFR19]{MR4044581}
Leandro~M. Del~Pezzo, Ra\'{u}l Ferreira, and Julio~D. Rossi.
\newblock Eigenvalues for a combination between local and nonlocal
  {$p$}-{L}aplacians.
\newblock {\em Fract. Calc. Appl. Anal.}, 22(5):1414--1436, 2019.

\bibitem[DPLV]{Dipierro2021POU}
Serena Dipierro, Edoardo Proietti~Lippi, and Enrico Valdinoci.
\newblock ({N}on)local logistic equations with {N}eumann conditions.
\newblock {\em Ann. Inst. H. Poincar\'{e} Anal. Non Lin\'{e}aire}.

\bibitem[DPLV22]{Dipierro2021LinearTF}
Serena Dipierro, Edoardo Proietti~Lippi, and Enrico Valdinoci.
\newblock Linear theory for a mixed operator with {N}eumann conditions.
\newblock {\em Asymptot. Anal.}, 128(4):571--594, 2022.

\bibitem[dTEJ17]{MR3724879}
F\'{e}lix del Teso, J{\o}rgen Endal, and Espen~R. Jakobsen.
\newblock On distributional solutions of local and nonlocal problems of porous
  medium type.
\newblock {\em C. R. Math. Acad. Sci. Paris}, 355(11):1154--1160, 2017.

\bibitem[DV21]{DV21}
Serena Dipierro and Enrico Valdinoci.
\newblock Description of an ecological niche for a mixed local/nonlocal
  dispersal: an evolution equation and a new {N}eumann condition arising from
  the superposition of {B}rownian and {L}\'{e}vy processes.
\newblock {\em Phys. A}, 575:Paper No. 126052, 20, 2021.

\bibitem[GL84]{gimbert1984existence}
F~Gimbert and PL~Lions.
\newblock Existence and regularity results for solutions of second-order,
  elliptic integro-differential operators.
\newblock {\em Ricerche Mat}, 33(2):315--358, 1984.

\bibitem[GT01]{GilbargTrudingerbook}
David Gilbarg and Neil~S. Trudinger.
\newblock {\em Elliptic partial differential equations of second order}.
\newblock Classics in Mathematics. Springer-Verlag, Berlin, 2001.
\newblock Reprint of the 1998 edition.

\bibitem[JK05]{MR2129093}
Espen~R. Jakobsen and Kenneth~H. Karlsen.
\newblock Continuous dependence estimates for viscosity solutions of
  integro-{PDE}s.
\newblock {\em J. Differential Equations}, 212(2):278--318, 2005.

\bibitem[MPV13]{MR3082317}
Eugenio Montefusco, Benedetta Pellacci, and Gianmaria Verzini.
\newblock Fractional diffusion with {N}eumann boundary conditions: the logistic
  equation.
\newblock {\em Discrete Contin. Dyn. Syst. Ser. B}, 18(8):2175--2202, 2013.

\bibitem[PV18]{MR3771424}
Benedetta Pellacci and Gianmaria Verzini.
\newblock Best dispersal strategies in spatially heterogeneous environments:
  optimization of the principal eigenvalue for indefinite fractional {N}eumann
  problems.
\newblock {\em J. Math. Biol.}, 76(6):1357--1386, 2018.

\bibitem[SV]{67SALORT}
Ariel Salort and Eugenio Vecchi.
\newblock On the mixed local-nonlocal {H}\'enon equation.
\newblock Preprint.

\bibitem[SVWZ22]{SVWZ}
Xifeng Su, Enrico Valdinoci, Yuanhong Wei, and Jiwen Zhang.
\newblock Multiple solutions for mixed local and nonlocal elliptic equations
  arising from the {L}\'evy type processes.
\newblock 2022.
\newblock Preprint.

\bibitem[WS15]{WS15}
Yuanhong Wei and Xifeng Su.
\newblock Multiplicity of solutions for non-local elliptic equations driven by
  the fractional {L}aplacian.
\newblock {\em Calc. Var. Partial Differential Equations}, 52(1-2):95--124,
  2015.

\bibitem[WS18]{WS18}
Yuanhong Wei and Xifeng Su.
\newblock On a class of non-local elliptic equations with asymptotically linear
  term.
\newblock {\em Discrete Contin. Dyn. Syst.}, 38(12):6287--6304, 2018.

\end{thebibliography}

\end{document}